\LetLtxMacro\amsproof\proof
\LetLtxMacro\amsendproof\endproof
\crefname{figure}{Figure}{Figures}
\Crefname{figure}{Figure}{Figures}
\crefname{section}{Section}{Sections}
\crefname{equation}{Equation}{Equations}
\crefname{remark}{Remark}{Remarks}
\crefname{Claim}{Claim}{Claims}
\crefname{Def}{Definition}{Definitions}
\crefname{Th}{Theorem}{Theorems}
\crefname{Cor}{Corollary}{Corollaries}
\crefname{Prop}{Proposition}{Propositions}
\crefname{Lmm}{Lemma}{Lemmas}
\crefname{Ex}{Example}{Examples}
\crefname{Exs}{Examples}{Examples}
\crefname{CEx}{Counter-example}{Counter-example}
\crefname{Q}{Question}{Questions}
\sodef\allcapsspacing{\upshape}{0.15em}{0.65em}{0.6em}%
\sodef\lowsmallcapsspacing{\scshape}{0.075em}{0.5em}{0.6em}%
\newcommand{\spacedlowsmallcaps}[1]{\MakeLowercase{\textsc{\lowsmallcapsspacing{#1}}}}
\titleformat{\paragraph}[runin]
{\normalfont\normalsize}{\theparagraph}{0pt}{\spacedlowsmallcaps}
\setlist{nosep}
\titlespacing*{\chapter}{0pt}{1\baselineskip}{1.2\baselineskip}
\titlespacing*{\section}{0pt}{1.25\baselineskip}{1\baselineskip} 
\titlespacing*{\subsection}{0pt}{1.25\baselineskip}{1\baselineskip}
\titlespacing*{\paragraph}{0pt}{1\baselineskip}{1\baselineskip}
\definecolor{MCB}{cmyk}{0,0.03,0.08,0.26} 
\definecolor{MFCB}{cmyk}{0,0.06,0.20,0.6} 
\colorlet{Leturquoise}{DeepSkyBlue4}
\colorlet{TurquoiseClair}{DeepSkyBlue4!20}
\colorlet{MyOrange}{DarkOrange3!85}
\newtcolorbox{CtBox}{colframe=DeepSkyBlue4!10.3,
  colback=DeepSkyBlue4!10.3,boxrule=0.0mm, halign=left}
\newtcolorbox{HypBox}{colframe=black,before=\vspace{0.3cm}\noindent, colback=black!0.3,boxrule=0.2mm, halign=left}
\newtcolorbox{IP}{colframe=black,colback=white, arc=0mm, bottomrule=0pt, toprule=0pt, rightrule=0pt, leftrule=5pt, halign=left}
\newcommand{\deffont}[1]{\emph{#1}}
  \LetLtxMacro\proof\amsproof
  \LetLtxMacro\endproof\amsendproof
\declaretheorem[
style=definition,
thmbox={style=S,underline=false,bodystyle=\normalfont \noindent,thickness=0.3pt},
name=Definition,
numberwithin=section,
refname={Definition,Definitions},
Refname={Definition,Definitions}]{Def}
\declaretheorem[
style=plain,
thmbox={style=S, underline=false, bodystyle=\normalfont \noindent},
name=Theorem,
sibling=Def,
refname={Theorem,Theorems},
Refname={Theorm,Theorems}]{Th}
\declaretheorem[
style=plain,
thmbox={style=S,underline=false,bodystyle=\normalfont \noindent},
name=Corollary,
sibling=Def,
refname={Corollary,Corollaries},
Refname={Corollary,Corollaries}]{Cor}
\declaretheorem[
style=plain,
thmbox={style=S,underline=false, bodystyle=\normalfont \noindent},
name=Proposition,
sibling=Def,
refname={Proposition,Propositions},
Refname={Propositions,Propositions}]{Prop}
\declaretheorem[
style=plain,
thmbox={style=S,underline=false,bodystyle=\normalfont \noindent},
name=Lemma,
sibling=Def,
heading={Lemma},
refname={Lemma,Lemmas},
Refname={Lemma,Lemmas}]{Lmm}
\theoremstyle{definition}
\newtheorem{Ex}[Def]{Example}
\newtheorem{Exs}[Def]{Examples}
\newtheorem{Q}[Def]{Question}
\newtheorem{Claim}[Def]{Claim}
\newtheorem{Rq}[Def]{Remark}
\numberwithin{equation}{section}
\newcommand{\bN}{\mathbb{N}}
\newcommand{\bZ}{\mathbb{Z}}
\newcommand{\bR}{\mathbb{R}}
\newcommand{\calC}{\mathcal{C}}
\newcommand{\diam}[1]{\mathrm{diam}\left( {#1} \right)}
\newcommand{\mbff}{\boldsymbol{f}}
\newcommand{\mbfg}{\boldsymbol{g}}
\newcommand{\mbfh}{\boldsymbol{h}}
\newcommand{\neutre}{\boldsymbol{e}}
\newcommand{\Gs}{\Gamma_m}
\newcommand{\Gsp}{{\Gamma^\prime}_m}
\newcommand{\Ds}{\Delta_m}
\newcommand{\WDs}{\Gs\wr\bZ} 
\newcommand{\BZ}{\Delta}
\newcommand{\TZ}{T^\prime}
\newcommand{\SZ}{\Sigma^\prime}
\newcommand{\RZ}{R^\prime}
\newcommand{\epsZ}{\varepsilon^\prime}
\newcommand{\range}{\mathrm{range}}
\newcommand{\supp}{\mathrm{supp}}
\newcommand{\profile}{I}
\newcommand{\landing}{\mathfrak{l}}
\newcommand{\Landing}{\mathfrak{L}}
\newcommand{\rhoaff}{\bar{\rho}}
\newcommand{\Cdiam}{C_R}
\newcommand{\Clm}{c}
\newcommand{\CTn}{C_{3}}
\title{\texorpdfstring{\textsc{Building prescribed quantitative orbit
      equivalence with $\bZ$}}{Building prescribed quantitative orbit
    equivalence with the group of integers}}
\author{\texorpdfstring{Amandine Escalier\thanks{Funded by Université
      Paris Cité and Sorbonne Université,
      CNRS, IMJ-PRG, F-75013 Paris, France and by the Deutsche
      Forschungsgemeinschaft (DFG, German Research Foundation) – Project-ID
      427320536 – SFB 1442, as well as under Germany’s 
      Excellence Strategy EXC 2044 –390685587, Mathematics Münster:
      Dynamics–Geometry–Structure.}}{Amandine Escalier}}
\date{\today}
\begin{document}

\maketitle

\begin{abstract}
  Two groups are orbit equivalent if they both admit an action
  on a same probability space that share the same orbits. In particular the Ornstein-Weiss theorem implies that all infinite
  amenable groups are orbit equivalent to the group of integers. To refine this
  notion between infinite amenable groups Delabie,
  Koivisto, Le Maître and Tessera introduced a quantitative version of orbit
  equivalence. They furthermore obtained obstructions to 
  the existence of such equivalence using the isoperimetric profile.\\
  In this article we offer to answer the inverse problem (find a group being
  orbit equivalent to a prescribed group with prescribed
  quantification) in the case of the group of integers using the so called
  Følner tiling shifts introduced by Delabie et al. To do so we use the
  diagonal products defined by Brieussel and Zheng 
  giving groups with prescribed isoperimetric profile.
\end{abstract}
\vspace{0.5cm}
\noindent\textbf{Classification} 37A20\\
\textbf{Keywords} orbit equivalence, lamplighter group, inverse problem,
isoperimetric profile, diagonal products
\vspace{0.75cm}

\section{Introduction} \label{Sec:Intro}
Two groups are orbit equivalent if they admit free measure-preserving actions on a same
standard probability space $(X, \mu)$ which share the same orbits. This notion
—emerging from the seminal work of Dye \cite{DyeI,DyeII}— can be seen as the
\emph{ergodic} version of the famous \emph{measure} equivalence introduced by
Gromov \cite{Gro}. A famous result of 
Ornstein and Weiss (see \cref{Th:OW}) implies that all amenable groups are
orbit equivalent. In particular —unlike quasi-isometry— orbit equivalence
does \emph{not} preserve coarse geometric invariants.

To overcome this issue it is therefore natural to look for some refinements of this 
orbit equivalence notion.
Assume for example that $G$ and $H$ are two finitely
generated orbit equivalent groups over a probability space $(X,\mu)$. Recall
that we can consider the Schreier graph associated to the action of $G$ (resp. $H$)
on $X$ and equip it with the usual metric $d_{S_G}$ (resp. $d_{S_H}$), fixing the
length of an edge to one. 
A first way to refine the measure equivalence is to quantify how close the two
actions are by studying for all $g\in G$ and $h\in H$ the integrability of the
two following maps 
\begin{equation*}
  x\mapsto d_{S_G}(x,h\cdot x) \qquad x\mapsto d_{S_H}(x,g\cdot x). 
\end{equation*}
When these two maps are $L^p$ we say that the groups are $L^p$-orbit
equivalent (see \cite{BFSIntegrability} for more details). In this refined
framework a famous result of Bader, Furman and Sauer \cite{BFSIntegrability}
implies that
any group $L^1$-orbit equivalent to a lattice in $SO(n, 1)$ for some $n\geq 2$ is 
virtually a lattice in $SO(n,1)$. This refinement also lead Bowen to prove in
the appendix of \cite{AustinBowen} that volume growth was invariant under
$L^1$-orbit equivalence.

Delabie, Koivisto, Le Maître and Tessera offered in \cite{DKLMT} to
extend this quantification to a family of functions larger than $\{x\mapsto x^p,
\ p\in [0,+\infty]\}$ (see \cref{Def:QuantMev}).
They furthermore showed the monotonicity of the isoperimetric profile under this  
quantified measure equivalence definition (see \cref{Th:ProfiletOE}). In
\cite{BZ} Brieussel and Zheng managed to construct amenable groups with
prescribed isoperimetric profile called \emph{diagonal product}. Considering the
monotonicity of the isoperimetric profile, the striking result of Brieussel and
Zheng thus triggers a new  question: instead of trying to quantify the
equivalence relation between two given groups, can one find a group that is
orbit equivalent to a prescribed group with a prescribed quantification? 

This is the problem we address in this article. Using Brieussel-Zheng’s construction we
exhibit a group that is orbit equivalent to $\bZ$ with a prescribed
quantification (see \cref{Th:CouplingwithZ}). Comparing the obtained coupling
to the constraints given by \cref{Th:ProfiletOE} 
we show that our coupling is close to being optimal for a sense of “optimal”
that we make precise in \cref{Subsec:IP}.

\subsection{Quantitative orbit equivalence}\label{Subsec:ME}
Let us recall some material from \cite{DKLMT}. A \emph{measure-preserving
  action}\index{Measure!Preserving action} 
of a discrete countable group $G$ on a measured space $(X,\mu)$ is an action of
$G$ on $X$ such that the map $(g,x)\mapsto g\cdot x$ is a Borel map and
$\mu(E)=\mu(g\cdot E)$ for all $E\subseteq \mathcal{B}(X)$ and all $g\in G$. We
will say that a measure-preserving action of $G$ on $(X,\mu)$ is
\emph{free}\index{Free action} if for almost every $x\in X$ we have $g\cdot x =
x$ if and only if $g=\neutre_G$.

We recall below the definition of orbit equivalence and the
quantified version as introduced by Delabie, Koivisto, Le Maître and Tessera
\cite{DKLMT}. We conclude this section by studying the relation between isoperimetric profile
and orbit equivalence. 

\begin{Def} Let $G$ and $H$ be two finitely generated groups. We say that $G$
  and $H$ are \deffont{orbit equivalent} if there exists a probability space
  $(X,\mu)$ and a measure-preserving free action of $G$ (resp. $H$) on $(X,\mu)$
  such that for almost every $x \in X$ we have $G\cdot x = H\cdot
  x$.\index{Orbit!Equivalent}
  We call $(X,\mu)$ an \deffont{orbit equivalence coupling} from $G$ to $H$.
\end{Def}

By the Ornstein Weiss theorem \cite[Th. 6]{OW80} below, all infinite amenable
groups are in the same equivalence class.  
\begin{Th}[{\cite{OW80}}]\label{Th:OW}
  All infinite amenable groups are orbit equivalent to $\bZ$. 
\end{Th}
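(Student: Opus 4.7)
The plan is to exhibit, given an arbitrary infinite amenable group $G$, a standard probability space $(X,\mu)$ carrying both a free p.m.p. action of $G$ and a free p.m.p. action of $\bZ$ whose orbits coincide almost everywhere. Without loss of generality I would start from a concrete ergodic free action of $G$ (for instance the Bernoulli shift $G \curvearrowright ([0,1]^G, \lambda^{\otimes G})$, which is free and ergodic) and then \emph{construct} a $\bZ$-action on $(X,\mu)$ whose orbits agree with the $G$-orbits, rather than trying to match two given actions.

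The central tool is the Ornstein--Weiss quasi-tiling (Rokhlin) lemma for amenable groups: for any Følner set $F \subseteq G$ and any $\varepsilon > 0$, one can find finitely many measurable sets $T_1,\ldots,T_k \subseteq X$ and Følner sets $F_1,\ldots,F_k$ (with $F_k = F$) such that the collection $\{gT_i : g \in F_i, 1 \leq i \leq k\}$ consists of pairwise disjoint subsets of $X$ whose union has measure at least $1-\varepsilon$. Iterating this carefully along a Følner sequence $(F_n)$ of $G$ yields what the paper calls a \emph{Følner tiling sequence}: a sequence of measurable partitions $\xi_n$ of $X$ (up to null sets) in which each atom is of the form $F_n \cdot x$ for some $x \in X$, and such that $\xi_{n+1}$ refines $\xi_n$ in a controlled way (each $F_{n+1}$-tile is exactly a union of $F_n$-tiles, modulo an error summable in $n$).

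From such a Følner tiling sequence, the $\bZ$-action is assembled tile by tile and level by level. Within each atom of $\xi_n$, enumerate the Følner set $F_n = \{g_1,\ldots,g_{|F_n|}\}$ in some fixed order and let $T$ act on the atom by the cyclic permutation $g_i x \mapsto g_{i+1} x$ (indices mod $|F_n|$); the compatibility of the partitions $\xi_{n+1} \succ \xi_n$ is used to re-order the enumeration at level $n+1$ so that the level-$n$ cyclic moves are preserved except at the boundary of each $F_n$-tile inside its enclosing $F_{n+1}$-tile. The Borel--Cantelli lemma, applied to the summable error terms, shows that for $\mu$-almost every $x$ the sequence of partial $\bZ$-actions stabilises on any finite interval around $x$, defining an automorphism $T : X \to X$. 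By construction $T$ is measure-preserving, takes values in the $G$-orbit of $x$, and any two points of the same $G$-orbit eventually lie in the same tile and hence in the same $T$-orbit; ergodicity and the infinitude of $G$ ensure that $T$ is aperiodic, hence the $\bZ$-action it generates is free.

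The main obstacle is the construction of the refining Følner tiling sequence in Step~2--3: the basic quasi-tiling lemma only produces tilings by \emph{several} Følner shapes with a small uncovered remainder, and turning these into a single nested sequence of partitions by \emph{actual} Følner tiles — with errors summable along the sequence — requires the full strength of Ornstein--Weiss's inductive argument (choosing $F_{n+1}$ sufficiently invariant with respect to the previous shapes and iterating the quasi-tiling to exhaust the remainder). Once that is in hand, the passage to the $\bZ$-action is essentially a bookkeeping exercise on how to read off tile orderings coherently across levels.
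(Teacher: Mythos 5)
The paper does not actually prove \cref{Th:OW}: it is cited from Ornstein and Weiss \cite{OW80} and used as a black box, the paper's own contribution being the quantitative refinement of the coupling rather than its existence. So there is no in-paper argument to compare against.

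Your outline is nonetheless a correct high-level description of the standard Ornstein--Weiss approach, and you have rightly identified the core difficulty: the quasi-tiling lemma produces coverings by \emph{several} Følner shapes with a small uncovered remainder, and converting these into a refining sequence of genuine measurable partitions with summable error is where the real work lies. Two points deserve more care in a complete write-up. First, you cannot arrange every atom of $\xi_n$ to be a translate of the single shape $F_n$; at each level the atoms carry one of finitely many Følner shapes, so the cyclic orderings must be threaded through tiles of varying size, and the ``re-ordering at level $n+1$'' must be specified so that the level-$n$ maps are altered on a set of measure $\le \varepsilon_n$ with $\sum\varepsilon_n<\infty$. Second, establishing that the $T$-orbit of almost every $x$ \emph{equals} (rather than merely lies inside) its $G$-orbit requires the Følner exhaustion explicitly: for any fixed $g\in G$ one must show that $g$ is eventually absorbed into the tile containing $x$, so the tower reaches $gx$. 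Freeness of the resulting $\bZ$-action then follows simply because $T$-orbits coincide with the infinite $G$-orbits — ergodicity is not what makes $T$ aperiodic. With these caveats filled in, the plan matches the literature.
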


To refine this equivalence relation and “distinguish” amenable groups we
introduce the quantified version of orbit equivalence.

Recall that if a finitely generated group $G$ acts on a space $X$ and if $S_G$
is a finite generating set of $G$, we can define the Schreier graph associated to this
action as being the graph whose set of vertices is $X$ and set of edges is
$\{(x,s\cdot x) \, | \, s\in S_K\}$. This graph is endowed with a natural metric
$d_{S_G}$ fixing the length of an edge to one. Remark that if $S^\prime_G$ is another
generating set of $G$ then there exists $C>0$ such that for all $x\in X$ and
$g\in G$
\begin{equation*}
 \frac{1}{C}d_{S_G}(x,g\cdot x) \leq d_{S^{\prime}_G}(x,g\cdot x) \leq Cd_{S_G}(x,g\cdot x).
\end{equation*}

\begin{Def}[{\cite[Def. 2.18]{DKLMT}}]\label{Def:QuantMev}
  \index{Phi-Psi-integrability@$(\varphi,\psi)$-integrability}
  We say that an orbit equivalence coupling
  $(X,\mu)$ from $G$ to $H$ is \deffont{$(\varphi,\psi)$-integrable} if
  for all $g\in G$ (resp. $h\in H$) there exists $c_g>0$ (resp. $c_h>0$) such
  that
  \begin{equation*}
    \int_{X} \varphi\left( \frac{1}{c_g} d_{S_H}(g\cdot x, x)
    \right)\mathrm{d}\mu(x)<+\infty \quad \text{and} \quad
    \int_{X}\psi\left( \frac{1}{c_h} d_{S_G}(h\cdot x, x)\right)
    \mathrm{d}\mu(x)<+\infty.
  \end{equation*} 
\end{Def}
We introduce the constants $c_g$ and $c_h$ in the definition for the 
integrability to be independent of the choice of generating sets $S_G$ and
$S_H$. If $\varphi(x)=x^p$ we will sometimes talk of $(L^p,\psi)$-integrability
instead of $(\varphi,\psi)$-integrability. In particular $L^0$ means
that no integrability assumption is made.
Finally, note that every $(L^\infty,\psi)$-integrable coupling is
$(\varphi,\psi)$-integrable for any increasing map $\varphi: \bR^+\rightarrow
\bR^+$.  When $\varphi=\psi$ we will say that the coupling is
$\varphi$-integrable instead of $(\varphi, \varphi)$-integrable.  

\begin{Exs}[\cite{DKLMT}]\label{Ex:CouplagesQt}\leavevmode 
  \begin{enumerate}
  \item There exists an orbit
    equivalence coupling between $\bZ^4$ and the Heisenberg 
    group $\mathrm{Heis}(\bZ)$ that is $L^p$-integrable for all $p<1$. 
  \item Let $k\in \bN^{*}$. Their exists an $(L^{\infty},\exp)$-integrable orbit
    equivalence coupling from the lamplighter group to the Baumslag-Solitar
    group $BS(1,k)$.
  \end{enumerate}
\end{Exs}

More examples will be given in \cref{Subsec:FOTS}. Let us conclude on the
quantification by a remark. We chose to refine orbit
equivalence using the \emph{integrable} point of view. But it is not the only
possible sharpening. For example Kerr and Li \cite{KerrLi} defined
\emph{Shannon orbit equivalence}: instead of looking at the integrability of
distance maps they consider the Shannon \emph{entropy} of partitions associated
to the coupling.

\subsection{Isoperimetric profile}\label{Subsec:IP}
As stated before, the orbit equivalence does not preserve the coarse geometric 
invariants. But the quantified version defined above allowed Delabie
et al. \cite{DKLMT} to get a relation between the isoperimetric profiles of two
orbit equivalent groups which we describe below.

Recall that if $G$ is generated by a finite set $S$, the \deffont{isoperimetric
  profile} of $G$ is defined as\footnote{We chose to adopt the convention of
  \cite{DKLMT}. Note that in \cite{BZ}, the isoperimetric profile is defined as
  $\Lambda_G=1/\profile_G$.} 
\begin{equation*}
  \profile_{G}(n):= \sup_{|A|\leq n} \frac{|A|}{|\partial A|}.
\end{equation*}
For example the isoperimetric profile of $\bZ$ verifies $\profile_{\bZ}(x) \simeq x$. 
Remark that due to Følner criterion, a group is amenable if and only if its
isoperimetric profile is unbounded. Hence we can see the isoperimetric profile
as a way to measure the amenability of a group: the faster $\profile_G$ tends to
infinity, the more amenable $G$ is.

The behaviour of the isoperimetric profile under measure equivalence
coupling is given by the theorem below. If $f$ and $g$ are two real functions we
denote $f \preccurlyeq g$ if there exists some constant $C>0$ such that
$f(x)=\mathcal{O}\big(g(Cx)\big)$ as $x$ tends to infinity. We write $f\simeq g$
if $f \preccurlyeq g$ and $g\preccurlyeq f$.

\begin{Th}[{\cite[Th.1]{DKLMT}}] \label{Th:ProfiletOE}
  Let $G$ and $H$ be two finitely generated groups admitting a
  $(\varphi,L^0)$-integrable orbit equivalence coupling. If $\varphi$ and
  $t/\varphi(t)$ are non-decreasing then
  \begin{equation*}
    \varphi \circ \profile_{H} \preccurlyeq \profile_G.
  \end{equation*}
\end{Th}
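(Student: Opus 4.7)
My plan is to construct a $G$-Følner set by transporting a near-optimal $H$-Følner set through the orbit equivalence cocycle, and to control its boundary via a $\varphi$-Markov inequality. By freeness of both actions, the orbit equivalence yields a measurable cocycle $\alpha : G \times X \to H$ satisfying $\alpha(g, x) \cdot x = g \cdot x$; for almost every $x$, the map $\pi_{x} : g \mapsto \alpha(g, x)$ is a bijection $G \to H$. The $(\varphi, L^{0})$-integrability hypothesis reads $\int_{X} \varphi\big(c_{s}^{-1} |\alpha(s, x)|_{S_{H}}\big)\,\mathrm{d}\mu(x) < +\infty$ for each $s \in S_{G}$, whence Markov's inequality gives the tail bound $\mu\{x : |\alpha(s, x)|_{S_{H}} \ge k\} \le C_{s}/\varphi(k)$.

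Fix $n$ and let $F \subset H$ be a near-optimal Følner set: $|F| \le n$ and $|F|/|\partial_{S_{H}} F| \ge \profile_{H}(n)/2$, chosen to be ``thick'' in the sense that the typical depth $d_{S_{H}}(h, H \setminus F)$ is of order $\profile_{H}(n)$. Define the random set $E_{x} := \pi_{x}^{-1}(F) \subset G$, of cardinality $|F| \le n$ almost surely. The cocycle identity $\alpha(sg, x) = \alpha(s, g \cdot x) \cdot \alpha(g, x)$ shows that, for $g \in E_{x}$ and $s \in S_{G}$, the condition $sg \notin E_{x}$ forces $\pi_{x}(g) \in F$ to lie at $S_{H}$-distance at most $|\alpha(s, g \cdot x)|_{S_{H}}$ from $H \setminus F$. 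Parameterizing by $h = \pi_{x}(g)$ and using $\mu$-invariance under $H$ to replace $h \cdot x$ by a generic $y$, this yields
\[
\mathbb{E}_{x}\,|\partial_{S_{G}} E_{x}|
\;\le\;
\sum_{s \in S_{G}} \sum_{h \in F} \mu\{y : |\alpha(s, y)|_{S_{H}} \ge d_{S_{H}}(h, H \setminus F)\}
\;\le\;
\sum_{s \in S_{G}} C_{s} \sum_{h \in F} \frac{1}{\varphi\big(d_{S_{H}}(h, H \setminus F)\big)}.
\]

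The remaining task is to bound the last sum by a constant multiple of $|F|/\varphi(\profile_{H}(n))$. Here the monotonicity hypothesis on $t/\varphi(t)$ (equivalently, $\varphi(\lambda t) \le \lambda \varphi(t)$ for $\lambda \ge 1$) is essential: it provides the Jensen-type inversion needed to exchange the average of $1/\varphi(d_{h})$ with $1/\varphi$ of the typical depth of $F$, which by our choice is $\gtrsim \profile_{H}(n)$. A deterministic $x$ then realises a $G$-set $E_{x}$ of size $\le n$ with $|E_{x}|/|\partial_{S_{G}} E_{x}| \ge \varphi(\profile_{H}(n))/C'$, giving $\profile_{G}(n) \succeq \varphi(\profile_{H}(n))$, i.e.\ $\varphi \circ \profile_{H} \preccurlyeq \profile_{G}$.

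I expect the delicate point to be precisely this averaging step. A naive Markov bound on near-boundary terms loses a logarithmic factor; it is the sub-homogeneity encoded in the monotonicity of $t/\varphi(t)$, combined with the existence of a ``thick'' near-optimal Følner set in the amenable group $H$, that allows this loss to be absorbed and the clean bound $\varphi \circ \profile_{H} \preccurlyeq \profile_{G}$ to emerge. Should a direct set-theoretic averaging prove too lossy, a natural alternative is to replace $\mathbf{1}_{F}$ by a smoothed Lipschitz function $f_{H}(h) := \min\big(d_{S_{H}}(h, H \setminus F), \profile_{H}(n)\big)$, transport it as $f_{G} := f_{H} \circ \pi_{x}$, and apply the $\ell^{1}$-functional characterisation of the isoperimetric profile.
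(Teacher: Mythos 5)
The paper only cites this result from \cite{DKLMT} and gives no proof of its own, so I am comparing your attempt against the argument in that reference. Your high-level strategy is correct, and the cocycle bookkeeping is sound: using $\alpha(sg,x)=\alpha(s,g\cdot x)\alpha(g,x)$, a change of variables $y=h\cdot x$, and freeness, you correctly reduce $\mathbb{E}_x\lvert\partial_{S_G}E_x\rvert$ to $\sum_{s\in S_G}\sum_{h\in F}\mu\{y:\alpha(s,y)h\notin F\}$. The gap is precisely where you flag it, and it is not the one you think. Two problems. First, a near-optimal Følner set $F$ with typical depth $\gtrsim \profile_H(n)$ need not exist; the isoperimetric inequality bounds $\lvert\partial F^{(k)}\rvert$ from below, not above, so the layers can collapse quickly. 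Second, even granting thickness, the depth-wise Markov bound $\sum_{h\in F}1/\varphi(d(h,F^c))\lesssim \lvert F\rvert/\varphi(\profile_H(n))$ genuinely fails for $\varphi$ near linear: with uniform layers and $\varphi(t)=t$ one gets $\lvert\partial F\rvert\sum_{k\le I}1/k\approx\lvert\partial F\rvert\log I$, a real $\log$ loss. Your appeal to a ``Jensen-type inversion'' via $t/\varphi(t)$ non-decreasing goes the wrong way: that hypothesis makes $1/\varphi$ convex, and Jensen then gives $\mathrm{avg}\,1/\varphi(d_h)\geq 1/\varphi(\mathrm{avg}\,d_h)$, the opposite of what you need.

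The correct use of the hypothesis on $t/\varphi(t)$ is a two-regime split at a \emph{uniform} scale $k$, not a per-point Markov bound at each depth. Write
\begin{equation*}
  \sum_{h\in F}\mu\{y:\alpha(s,y)h\notin F\}
  = \int_{\{\lvert\alpha(s,y)\rvert\le k\}}\bigl\lvert F\setminus \alpha(s,y)^{-1}F\bigr\rvert\,d\mu(y)
  \;+\; \sum_{h\in F}\mu\{\lvert\alpha(s,y)\rvert>k\}.
\end{equation*}
The tail term is $\le C\lvert F\rvert/\varphi(k)$ by Markov. For the bulk term, use $\lvert F\setminus\eta^{-1}F\rvert\lesssim\lvert\eta\rvert\,\lvert\partial F\rvert$ and, crucially, the pointwise bound $\lvert\eta\rvert\le\bigl(k/\varphi(k)\bigr)\varphi(\lvert\eta\rvert)$ valid on $\{\lvert\eta\rvert\le k\}$ --- this is exactly where ``$t/\varphi(t)$ non-decreasing'' enters. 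Then $\int_{\{\lvert\alpha\rvert\le k\}}\lvert\alpha\rvert\,d\mu\le (k/\varphi(k))\int\varphi(\lvert\alpha\rvert)\,d\mu\le Ck/\varphi(k)$, so the bulk term is $\lesssim\lvert\partial F\rvert\,k/\varphi(k)$. Choosing $k=\profile_H(n)\approx\lvert F\rvert/\lvert\partial F\rvert$ equalises the two contributions and gives $\mathbb{E}_x\lvert\partial_{S_G}E_x\rvert\lesssim\lvert F\rvert/\varphi(\profile_H(n))$ with no logarithmic loss and no thickness assumption on $F$; a deterministic $x$ then yields $\profile_G(n)\gtrsim\varphi(\profile_H(n))$. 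This two-scale decomposition, trading the per-depth tail bound for a single cutoff controlled by the $\varphi$-integral, is the ingredient missing from your sketch.
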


This theorem provides an obstruction for finding $\varphi$-integrable couplings
with certain functions $\varphi$ between two amenable groups. For example
for a coupling with $H=\bZ$ the integrability has to verify $\varphi \preccurlyeq
I_G$.
This lead the authors of \cite{DKLMT} to ask the following question.
\begin{Q}[{\cite[Question 1.2]{DKLMT}}]\label{Q:DKLMTZ}
  Given an amenable finitely generated group $G$, does there exist a
  $(I_G,L^0)$-integrable orbit equivalence coupling from $G$ to $\bZ$?
\end{Q}

We answer the above question for a large family of maps
$\varphi$ in \cref{Th:CouplingwithZ}. We will see that the coupling
we build to proof the aforementioned theorem answers \cref{Q:DKLMTZ}
up to a logarithmic error.

\subsection{Main results}
In this paper we show the following main theorem and its corollary below.

\begin{restatable}{Th}{CouplingwithZ}\label{Th:CouplingwithZ}
  For all non-decreasing function $\rho: [1,+\infty[
  \rightarrow [1,+\infty[$ such that $\rho(1)=1$ and $x/\rho(x)$ is
  non-decreasing, there exists a group $G$ such that
  \begin{itemize}
  \item $I_{G}\simeq \rho \circ \log$;
  \item there exists an orbit equivalence coupling from $G$ to $\bZ$ that is
    $(\varphi_{\varepsilon},\exp \circ \rho)$-integrable for all
    $\varepsilon>0$, where $\varphi_{\varepsilon}(x):={\rho\circ
      \log(x)}/{\left( \log\circ\rho\circ\log(x) \right)^{1+\varepsilon}}$.
  \end{itemize}
\end{restatable}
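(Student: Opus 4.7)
The plan is to combine two existing constructions: the diagonal product groups of Brieussel--Zheng, which allow one to prescribe the isoperimetric profile, and the Følner tiling shifts of Delabie--Koivisto--Le Maître--Tessera, which turn a sufficiently nice Følner tiling into an orbit equivalence coupling with $\bZ$. The whole proof is thus a careful calibration of parameters so that a single Brieussel--Zheng group simultaneously hits the desired profile $\rho\circ\log$ and produces a tiling whose costs, when fed into the shift construction, realise the announced $(\varphi_\varepsilon,\exp\circ\rho)$-integrability.

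\textbf{Step 1: Construction of $G$.} Given $\rho$, I would define $G$ as a Brieussel--Zheng diagonal product $\Delta_{(k_m)}\le\prod_m \Gamma_m\wr\bZ$, choosing the sequences $(\Gamma_m)$ and $(k_m)$ from their dictionary so that $I_G\simeq \rho\circ\log$. This is direct from \cite{BZ} and settles the first bullet of the theorem.

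\textbf{Step 2: Building a Følner tiling.} The natural Følner sets of $\Delta_{(k_m)}$ are supported on an interval of length roughly $k_m$ in the $\bZ$-direction, equipped with the full lamp alphabet $\Gamma_m$ at each site; up to adjusting constants this gives Følner sets $F_m$ whose cardinality is essentially $\exp(\rho(k_m))$ at the scale where the isoperimetric ratio saturates. A key step is to check that the $F_m$ tile $G$ (possibly after a mild modification) and are nested in the sense required by the Følner tiling shift construction, which then yields an orbit equivalence coupling $(X,\mu)$ from $G$ to $\bZ$.

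\textbf{Step 3: Estimating the costs.} In such couplings the cost of an element of $\bZ$ measured in $G$ is dominated by the diameter of the tile containing the base point, while the cost of a generator of $G$ measured in $\bZ$ is dominated by the size of the tile. Since $\mathrm{diam}_G(F_m)\simeq k_m$ and $|F_m|\simeq\exp(\rho(k_m))$, plugging in a level $m$ encompassing the point $x$ gives a bound of order $\rho\circ\log$ on $d_{S_\bZ}(g\cdot x,x)$ and of order $\exp\circ\rho$ on $d_{S_G}(h\cdot x,x)$. Integrating these pointwise bounds against $\mu$ and summing the contributions of successive scales is where the logarithmic loss appears: to make the series
\begin{equation*}
  \sum_m \varphi_\varepsilon\bigl(\rho(k_m)\bigr)\,\mu(\text{scale }m)
\end{equation*}
converge, one divides by $(\log\circ\rho\circ\log)^{1+\varepsilon}$, yielding exactly $\varphi_\varepsilon$; the $\exp\circ\rho$-side is summable without extra weight because the Følner ratios decay fast enough.

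\textbf{Main obstacle.} The delicate point is the joint calibration at Step 2. One needs the same sequence $(k_m)$ to (a) reproduce $\rho\circ\log$ as the isoperimetric profile in the sense of Brieussel--Zheng, (b) make $(F_m)$ a genuine Følner tiling, and (c) control the sum over scales in Step 3. The $\varepsilon$-loss in $\varphi_\varepsilon$ is essentially the price of this balancing and is what prevents an exact match with the obstruction of \cref{Th:ProfiletOE} in \cref{Q:DKLMTZ}; verifying that no further loss is needed in the $\exp\circ\rho$ direction will likely require exploiting the exponential gain provided by the lamp alphabet $\Gamma_m$ at each scale.
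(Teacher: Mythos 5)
Your proposal follows exactly the paper's strategy: take $G$ to be a Brieussel--Zheng diagonal product $\Delta$ calibrated so that $I_G\simeq\rho\circ\log$, build Følner tiling shifts for $\Delta$ (with tiles $T_n=F_{\kappa^n}$) and for $\bZ$ with matching cardinalities, and feed the resulting $(R_n,\varepsilon_n)$-parameters into the DKLMT summability criterion of \cref{Prop:ConditionpourOE}. The one place where your sketch is off is in the Step 3 estimates: the diameter of the tile at level $n$ is not $\simeq k_m$ but $\simeq \kappa^n l_{\Landing(n)}$, and its log-cardinality is likewise $\simeq\kappa^n l_{\Landing(n)}$ (not $\rho(k_m)$); the identity that makes the whole calibration close is $\rhoaff(\kappa^n l_{\Landing(n)})=\kappa^n$, which converts both parameters into the quantity the criterion needs and is what produces the $\varphi_\varepsilon$ numerator $\rho\circ\log$. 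With those corrected estimates the rest of your outline, including the $(\log)^{1+\varepsilon}$ weight to ensure summability and the exponential gain from $|\Gamma_m|$ to absorb $\exp\circ\rho$ on the other side, matches the paper's computation.
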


Let us discuss the optimality of this result. Consider a
$(\varphi,L^0)$-integrable orbit equivalence coupling from some group $G$ to
$\bZ$. By \cref{Th:ProfiletOE} it verifies $\varphi\circ I_{\bZ} \preccurlyeq
I_G$. In particular since $I_{\bZ}(x)\simeq x$, we can not have a better
integrability than $\varphi(x)\simeq I_G$. Since $I_{G}\simeq \rho \circ
\log$ our above theorem is optimal up to a logarithmic error. We discuss this in
more length in \cref{Sec:Conclusion}.

\paragraph{Main Ingredients} The main tools of the proof of
\cref{Th:CouplingwithZ} are Brieussel-Zheng’s 
diagonal products (see \cref{Sec:BZGroupsZ}) and Følner tiling shifts (see
\cref{Sec:FOTS}). We show that a diagonal product $\BZ$ admits a coupling with
$\bZ$ satisfying \cref{Th:CouplingwithZ}. To prove it we use the integrability
criterion given by \cref{Prop:ConditionpourOE} and involving Følner tiling
shifts. 

Therefore we compute in \cref{subsec:FOTSDelta} a Følner tiling shift
$(\Sigma_n)_n$ for $\BZ$. We also estimate the tiles’ diameter and the
proportion of elements in the boundary. We construct a Følner tiling shift for
$\bZ$ in \cref{subsec:TilesZ} and show that these two tiling shifts verify
\cref{Prop:ConditionpourOE}.

\bigskip

Let us now consider the possible generalisations of this result to other
groups than the group of integers. 
To do so we can use the \emph{composition} of couplings described in
\cite[Section~2]{DKLMT}.

Given the above theorem, once we have a measure equivalence coupling from $\bZ$
to a group $H$ we can compose the two couplings to obtain a measure equivalence
from $G$ to $H$.
If the growth of the isoperimetric profile of $H$ is close to the one of $\bZ$, the
integrability of the obtained coupling will be close to the optimal one given by
\cref{Th:ProfiletOE}. It is for example the case when $H=\bZ^d$.

\begin{Cor}\label{Rq:Zd} Let $d\in \bN^{*}$ and $\varepsilon>0$.
  For all non-decreasing function $\rho: [1,+\infty[
  \rightarrow [1,+\infty[$ such that $\rho(1)=1$ and $x/\rho(x)$ is
  non-decreasing,
  if the map $\varphi_\varepsilon$ defined in \cref{Th:CouplingwithZ}
  is subadditive and concave, then there exists a group $G$ such that
  \begin{itemize}
  \item $I_{G}\simeq \rho \circ \log$ ;
  \item there exists a $(\varphi_{\varepsilon},L^0)$-integrable orbit
  equivalence coupling from $G$ to $\bZ^d$.
  \end{itemize}
\end{Cor}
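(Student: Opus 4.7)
The strategy is to use composition of orbit equivalence couplings. By Theorem \ref{Th:CouplingwithZ} we have a group $G$ with $I_G \simeq \rho \circ \log$ together with a $(\varphi_\varepsilon, \exp \circ \rho)$-integrable orbit equivalence coupling $(X, \mu)$ from $G$ to $\bZ$. We then compose this with an orbit equivalence coupling from $\bZ$ to $\bZ^d$, using the composition procedure for quantitative couplings described in \cite[Section~2]{DKLMT}.

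The first step is to produce the auxiliary coupling $(Y, \nu)$ from $\bZ$ to $\bZ^d$. A natural candidate is given by a Følner tiling shift pairing cubes in $\bZ^d$ with intervals in $\bZ$ of the same cardinality. In this coupling the $\bZ$-action moves each point by a uniformly bounded amount in $\bZ^d$ (a $\bZ$-generator shifts the position along a Hamiltonian-type path through a cube, hence by bounded distance in $\bZ^d$), so the coupling is $(L^\infty, L^0)$-integrable from $\bZ$ to $\bZ^d$. The reverse direction cannot be $L^\infty$ because a $\bZ^d$-generator may have to traverse an entire cube in $\bZ$, but this is not required for the corollary, which only demands $L^0$ in the reverse direction.

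Composing $(X, \mu)$ with $(Y, \nu)$ as in \cite[Section~2]{DKLMT} produces an orbit equivalence coupling from $G$ to $\bZ^d$. Tracing a $G$-translation through the composite, a $G$-step induces a displacement in $\bZ$ of magnitude $d_{S_\bZ}(g \cdot x, x)$ which is $\varphi_\varepsilon$-integrable, and by the $L^\infty$-integrability of the second coupling this in turn induces a displacement of size at most $C \, d_{S_\bZ}(g \cdot x, x)$ in $\bZ^d$. The subadditivity of $\varphi_\varepsilon$ (together with $\varphi_\varepsilon(0)=0$) yields an estimate of the form $\varphi_\varepsilon(Ct) \leq C' \varphi_\varepsilon(t)$, so the $\varphi_\varepsilon$-integrability of the first coupling transfers to a $\varphi_\varepsilon$-integrability of the $G$-displacement in the composite; concavity ensures the constants behave well when one averages over $(Y,\nu)$.

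The main obstacle is producing the auxiliary coupling $(Y, \nu)$ with the correct quantitative control on the $\bZ$-action, and then matching the composition formula of \cite[Section~2]{DKLMT} to the pointwise estimate above. The subadditivity and concavity hypotheses on $\varphi_\varepsilon$ enter precisely at this juncture, to guarantee that the bounded but nontrivial change of generating system induced by the second coupling does not degrade the $G$-side integrability provided by Theorem \ref{Th:CouplingwithZ}.
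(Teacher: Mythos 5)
Your high-level strategy — compose the coupling from \cref{Th:CouplingwithZ} with an auxiliary orbit equivalence coupling from $\bZ$ to $\bZ^d$ — is the same as the paper's, and you correctly identify that the subadditivity/concavity hypotheses enter at the composition step. However, there is a genuine error in the construction of the auxiliary coupling: there is \emph{no} $(L^\infty,L^0)$-integrable orbit equivalence coupling from $\bZ$ to $\bZ^d$ when $d\geq 2$. Indeed, an $(L^\infty,L^0)$-integrable coupling would be $(\varphi,L^0)$-integrable for every non-decreasing $\varphi$, in particular for $\varphi(t)=t^{d+1}$; but \cref{Th:ProfiletOE} forces any such $\varphi$ to satisfy $\varphi\circ I_{\bZ^d}\preccurlyeq I_\bZ$, i.e.\ $\varphi(t)\preccurlyeq t^d$, a contradiction. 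The Hamiltonian-path heuristic fails because in a Følner tiling coupling the $\bZ$-generator must, on a set of measure $\sim 2^{-n}$, jump across a level-$n$ tile boundary, incurring a displacement of order $\diam(T'_n)\sim 2^{n/d}$ in $\bZ^d$; this is exactly why the coupling of \cref{Ex:CouplageZmZn} is only $L^p$ for $p<d$.

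The paper circumvents this by taking the auxiliary coupling to be $(L^1,L^0)$-integrable (the case $p=1$, $q=0$ of \cref{Ex:CouplageZmZn}) and then invoking \cref{Prop:CompositionOE} with $\psi(t)=t$, so that the composed coupling is $(\varphi_\varepsilon\circ\mathrm{id},L^0)=(\varphi_\varepsilon,L^0)$-integrable. This is precisely where the concavity hypothesis on $\varphi_\varepsilon$ is used: since the auxiliary coupling is only $L^1$ and not $L^\infty$, one cannot pass to a pointwise bound as in your third paragraph, but must instead average over the fibres of the composition, which requires Jensen's inequality and hence concavity. Your closing remark that ``concavity ensures the constants behave well when one averages'' points at the right mechanism, but it cannot repair the argument as written, which rests on the false $L^\infty$ claim to reduce the composition to a pointwise estimate.
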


\paragraph{Structure of the paper} In \cref{Sec:BZGroupsZ} we present the
diagonal products introduced by Brieussel and Zheng. We recall some of the
properties shown in \cite{BZ} and compute Følner sequences.
\Cref{Sec:FOTS} is devoted to Følner tiling shifts. These tools built by Delabie
et al. \cite{DKLMT} allow us to construct and quantify an orbit equivalence coupling
between two groups. In this section we also construct Følner tiling shifts for
diagonal products $\BZ$.
We show our main theorem in \cref{Sec:CouplageZ} combining the results of the
two previous sections. Finally we discuss the limits of this construction and
some open problems in \cref{Sec:Conclusion}.

\begin{center}
  \adforn{24}
\end{center}

\paragraph{Acknowledgements} 
I would like to thank Romain Tessera and Jérémie Brieussel, under
whose supervision the work presented in this article was carried out.
I thank them for suggesting the topic, sharing their precious insights and for
their many useful advice. I also thank the anonymous referee for their remarks
and corrections.
\section{Diagonal products of lamplighter groups}\label{Sec:BZGroupsZ}
\label{Sec:DefdeBZ}
We recall here necessary material from \cite{BZ} concerning the definition of
\emph{Brieussel-Zheng’s diagonal products}. We give the definition of such
a group, recall and prove some results concerning the range (see
\cref{Def:Range}) of an element and use it to identify a Følner
sequence. Finally we present in \cref{Subsec:FromIPtoBZ} the tools needed to
recover such a diagonal product starting with a prescribed isoperimetric
profile.

\subsection{Definition of diagonal products} \label{subsec:DefDelta}
Recall that the wreath product of a group $G$ with $\bZ$ denoted $G \wr \bZ$ is
defined as $G\wr \bZ:= \oplus_{m \in \bZ} G \rtimes \bZ$. An element of $G\wr
\bZ$ is a pair $(f,t)$ where $f$ is a map from $\bZ$ to $G$ with finite support
and $t$ belongs to $\bZ$. We refer to $f$ as the \deffont{lamp configuration}
and $t$ as the \deffont{cursor.} Finally we denote by $\supp(f)$ the
\deffont{support} of $f$ which is defined as
$\supp(f):=\left\{x\in \bZ \ | \ f(x)\neq e_G \right\}$.

\subsubsection{General definition}
Let $A$ and $B$ be two finite groups. Let $(\Gamma_m)_{m\in \bN}$ be a sequence
of finite groups such that each $\Gamma_m$ admits a generating set of the form $A_m \cup B_m$
where $A_m$ and $B_m$ are finite subgroups of $\Gamma_m$ isomorphic respectively
to $A$ and $B$. For $a \in A$ we denote $a_m$ the copy of $a$ in $A_m$ and
similarly for $B_m$.

Finally let $(k_m)_{m \in \bN}$ be a sequence of integers such that $k_{m+1} \geq 2 k_m$
for all $m$. We define $\Ds=\WDs$ and endow it with the generating set  \label{Def:SDeltam}
\begin{equation*}
  S_{\Ds}:=
  \Big\{ (\mathrm{id},1 )\Big\} \cup \Big\{ \big(a_m \delta_0,0 \big) \ | \ a_m \in A_m\Big\}
  \cup \Big\{ \big(b_m \delta_{k_m},0 \big) \ | \ b_m \in B_m\Big\}.
\end{equation*}

\begin{Def}\label{Def:BZGr}\index{Diagonal product} \index{Brieussel-Zheng’s
    diagonal product}
  The \deffont{Brieussel-Zheng diagonal product} associated to
  $(\Gamma_m)_{m\in \bN}$ and $(k_m)_{m\in \bN}$ is the subgroup $\BZ$ of $ \left(\prod_m 
    \Gamma_m\right)\wr \bZ$ generated by 
  \begin{equation*}
    S_{\BZ}:=
    \Big\{ \Big({ {\big(\mathrm{id}\big)}_m,1 }\Big)\Big\} \cup
    \Big\{ \big( {\left(a_m \delta_0  \right)}_m,0 \big) \ | \ a \in A\Big\} 
    \cup \Big\{ \big( {\left( b_m \delta_{k_m} \right)}_m,0 \big) \ | \ b \in B\Big\}.
  \end{equation*}
\end{Def}

The group $\BZ$ is uniquely determined by the sequences $(\Gs)_{m\in \bN}$ and
$(k_m)_{m\in \bN}$. Let us give an illustration of what an element in such a
group looks like. We will denote by $\mbfg$ the sequence $(g_m)_{m\in \bN}$.

\begin{Ex} We
  represent in \cref{fig:ab} the element $\big(\mbfg,t \big)$ of $\BZ$ verifying
  \begin{equation*}
    (\mbfg,t)=\big( (g_m)_{m\in \bN},t \big):=\big({\left(a_m\delta_0\right)}_m,0 \big)
    \big( {\left( b_m \delta_{k_m}\right)}_m,0\big) (0,3),
  \end{equation*}
  when $k_m=2^m$. The cursor is represented by the blue arrow at the bottom of the
  figure. The only value of $g_0$ different from the identity is $g_0(0)=(a_0,b_0)$.
  Now if $m>0$ then the only values of $g_m$ different from the identity are
  $g_m(0)=a_m$ and $g_m(k_m)=b_m$.  
\end{Ex}

\begin{figure}[htbp]
  \centering
  \includegraphics[width=0.8\textwidth]{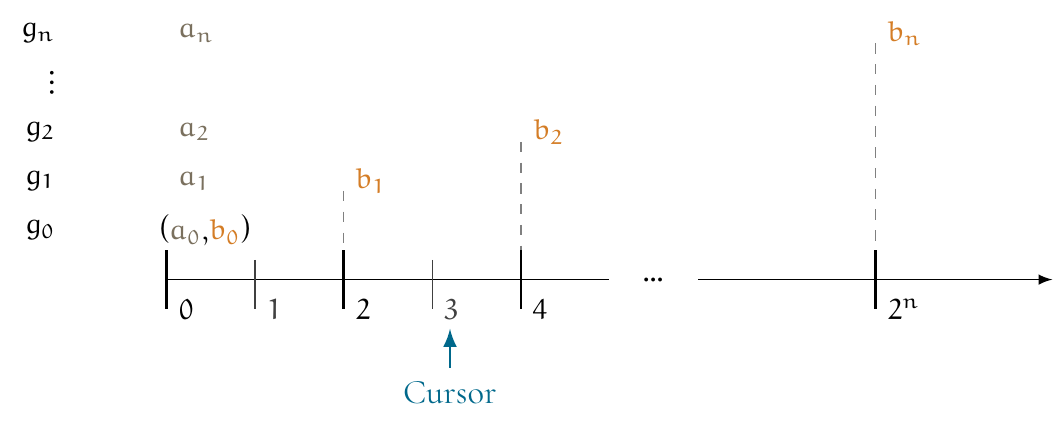}
  \caption{Representation of $\big(\mbfg,t\big)=\big((a_m\delta_0)_m,0\big)%
    \big((b_m\delta_{k_m})_m,0\big)(0,3)$ when $k_m=2^m$.}
  \label{fig:ab}
\end{figure}

\subsubsection{The expanders case}\label{subsec:Expanders}
In this article we will restrict ourselves to a particular familiy of groups
$(\Gamma_m)_{m\in \bN}$ called \deffont{expanders.}  Recall that $(\Gamma_m)_{m\in \bN}$
is said to be a sequence of \deffont{expanders}\index{Expander} if the sequence
of diameters $(\diam{\Gamma_m})_{m\in \bN}$ is unbounded and if there exists
$c_0>0$ such that for all $m\in \bN$ and all $n\leq |\Gamma_m|/2$ the
isoperimetric profile verifies $I_{\Gamma_m}(n)\leq c_0$.

When talking about diagonal products we will always make the following
assumptions. We refer to \cite[Example 2.3]{BZ} for an explicit example of
diagonal product verifying \textbf{(H)}.

\begin{HypBox}
  \textbf{Hypothesis (H)}
  \begin{itemize}
  \item $(k_m)_m$ and $(l_m)_m$ are sub-sequences of geometric
    sequences;
  \item $k_{m+1}\geq 2 k_m$ for all $m\in \bN$;
  \item $(\Gamma_m)_{m\in \bN}$ is a sequence of expanders such that $\Gamma_m$
    is a quotient of $A*B$ and there exists $\Clm >0$ such that
    $1/\Clm l_m \leq \diam{\Gamma_m}\leq \Clm l_m$ for all $m\in \bN$;
  \item $k_0=0$ and $\Gamma_0=A_0\times B_0$;
  \item the natural quotient map $A_m\times B_m\rightarrow
      \langle{ \langle{\left[A_m,B_m\right]}}  \rangle \rangle \backslash \Gamma_m$
    is an isomorphism,  where $\langle{\langle{\left[A_m,B_m\right]}\rangle}\rangle $
    is the normal
    closure of $\left[A_m,B_m\right]$.
  \end{itemize}
\end{HypBox}
Recall (see \cite[page 9]{BZ}) that in this case there exist $c_1$, $c_2>0$ such
that, for all $m$
\begin{equation}
  \label{eq:encadrementlngammap}
  c_1 l_m -c_2 \leq \ln \left\vert {\Gamma_m}\right\vert \leq c_1l_m + c_2.
\end{equation}

Finally we adopt the convention of \cite[Notation 2.2]{BZ} and allow $(k_m)_{m\in \bN}$
to take the value $+\infty$. In this case $\BZ_s$ is the trivial group. In
particular when $k_1=+\infty$ the diagonal product $\BZ$ corresponds to the
usual lamplighter $(A\times B)\wr \bZ$. 

\subsubsection{Relative commutators subgroups}\label{subsec:Derivedfunctions}

Let $\theta_m : \Gs \rightarrow 
\langle{ \langle{\left[A_m,B_m\right]}}  \rangle \rangle \backslash \Gs \simeq
A_m \times B_m$ be the natural projection, for all $m\in \bN$. Let $\theta^A_m$ and $\theta^B_m$
denote the composition of $\theta_m$ with the projection to $A_m$ and $B_m$
respectively. Now let $m\in \bN$ and define $\Gsp:= \langle{ \langle{
    \left[A_m,B_m\right]}} \rangle \rangle$. If $(g_m,t)$ belongs to $\Delta_m$
then there exists a unique $g^{\prime}_m \ : \ \bZ \rightarrow \Gsp$ such that
$g_m(x)=g^{\prime}_m(x)\theta_m\big(g_m(x)\big)$ for all $x\in \bZ$.
\begin{Ex}
  Let $(\mbfg,3)$ be the element described in \cref{fig:ab}. Then
  the only non-trivial value of $\theta_0(g_0)$ is 
  $\theta_0(g_0(0))=(a_0,b_0)$. If $m>0$ then the only non trivial values of
  $\theta_m(g_m)$ are $\theta_m(g_m(0))=(a_m,\neutre)$ and
  $\theta_m(g_m(k_m))=(\neutre,b_m)$. Finally for all $m$ we have 
  $g^{\prime}_m=\mathrm{id}$ since there are no commutators appearing in the
  decomposition of $(\mbfg,0)$.
\end{Ex}
\begin{Ex}
  \label{Ex:Commutateur}
  Assume that $k_m=2^m$ and consider first the element $(\mbff,0)$ of $\BZ$
  defined by $(\mbff,0):=(0,-k_1)\big((a_m\delta_0)_{m},0 \big)(0,k_1)$. Now
  define the commutator 
  \begin{equation*}
    (\mbfg,0)=(\mbff,0)\cdot \big((b_m\delta_{k_m})_{m},0\big)\cdot
    (\mbff,0)^{-1} \cdot \big((b^{-1}_m\delta_{k_m})_{m},0\big)
  \end{equation*}
  and let us describe the values taken by $\mbfg$ and the induced maps
  $\theta_m(g_m)$ and $g^{\prime}_m$ (see \cref{fig:Commutateur} for a
  representation of $\mbfg$). The only non-trivial commutator appearing in the
  values taken by $\mbfg$ is $g_1(k_1)$ which is equal to $
  a_1b_1a^{-1}_1b^{-1}_1$. In other words
  $g_0$ is the identity, thus 
  $\theta_0=\mathrm{id}$. Moreover when $m=1$ 
  we have $\theta_1=\mathrm{id}$ and the only value of $g^{\prime}_1(x)$
  different from $\neutre$ is $g^{\prime}_1(k_1)= a_1b_1a^{-1}_1b^{-1}_1$ (on a
  blue background in \cref{fig:Commutateur}). Finally
  if $m>1$ then $g_m$ is the identity thus $\theta_m=\mathrm{id}$ and
  $g^{\prime}_m=\mathrm{id}$.
\end{Ex}
\begin{figure}[htbp]
  \centering
  \includegraphics[width=0.90\textwidth]{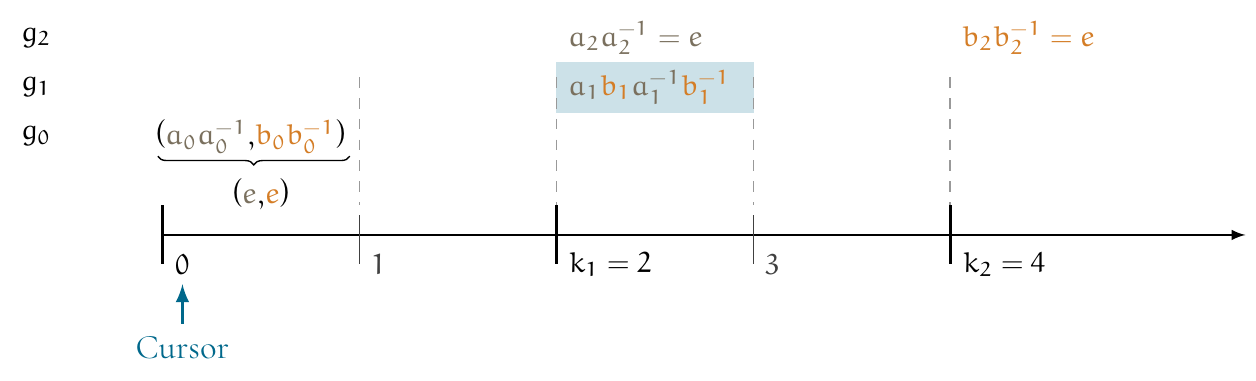}
  \caption[\cref{Ex:Commutateur}]{Representation of $(\mbfg,0)$ defined in
    \cref{Ex:Commutateur}}
  \label{fig:Commutateur}
\end{figure}
Let us study the behaviour of this decomposition under product of
lamp configurations.

\begin{Claim}\label{Claim:DeriveeProduit}
  If $g_m,f_m \ : \ \bZ \rightarrow \Gamma_m$ then
   $(g_mf_m)^{\prime}=g^{\prime}_m\theta_m(g_m)f^{\prime}_m\Big(\theta_m(g_m) \Big)^{-1}$.
\end{Claim}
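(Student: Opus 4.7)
The plan is to verify the identity pointwise in $x \in \bZ$ using two ingredients. First, that $\theta_m$ is a group homomorphism $\Gs \to A_m \times B_m$: this holds because $\Gsp$ is by construction the normal closure of $[A_m,B_m]$ in $\Gs$, so it is normal and $\theta_m$ is just the projection to the quotient, which under hypothesis \textbf{(H)} is identified with $A_m \times B_m$. Second, the uniqueness of the factorisation $\gamma = \gamma' \cdot \theta_m(\gamma)$ with $\gamma' \in \Gsp$, which is precisely the property defining the prime notation.

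First I would fix $x \in \bZ$ and substitute the factorisations $g_m(x) = g'_m(x)\,\theta_m(g_m(x))$ and $f_m(x) = f'_m(x)\,\theta_m(f_m(x))$ into the pointwise product $(g_m f_m)(x) = g_m(x) f_m(x)$. Inserting $\theta_m(g_m(x))^{-1} \theta_m(g_m(x)) = \neutre$ between the two middle factors would let me rewrite the product as
\[
\Bigl( g'_m(x)\, \theta_m(g_m(x))\, f'_m(x)\, \theta_m(g_m(x))^{-1} \Bigr) \cdot \bigl( \theta_m(g_m(x))\, \theta_m(f_m(x)) \bigr).
\]
By the homomorphism property, the second bracket equals $\theta_m\bigl((g_m f_m)(x)\bigr)$. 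The first bracket is the product of $g'_m(x) \in \Gsp$ with the conjugate of $f'_m(x) \in \Gsp$ by $\theta_m(g_m(x))$, and hence lies in $\Gsp$ by normality. Invoking uniqueness of the factorisation, this first bracket must be $(g_m f_m)'(x)$, which is exactly the claim.

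I do not expect a serious obstacle. The only mildly delicate point is that $\theta_m(g_m(x))$ has to be interpreted as an actual element of $\Gs$, via the fixed set-theoretic section $A_m \times B_m \hookrightarrow \Gs$ coming from the embeddings $A_m, B_m \subset \Gs$, so that the computation above takes place inside $\Gs$; once this convention is agreed upon the argument is a single line of algebra, and the formula extends from a single $x$ to all of $\bZ$ by the fact that the decomposition is defined pointwise.
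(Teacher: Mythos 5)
Your argument is essentially the paper's own proof, written out a bit more carefully: both substitute the prime decompositions of $g_m$ and $f_m$, insert $\theta_m(g_m)^{-1}\theta_m(g_m)$ to create a conjugate of $f'_m$, use normality of $\Gsp$ to place the first bracket in $\Gsp$, and conclude by uniqueness of the decomposition. You add an explicit appeal to uniqueness and make visible the role of the section $A_m\times B_m\hookrightarrow\Gs$, both of which the paper leaves implicit.

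There is, however, a genuine gap, and it sits precisely at the step you flag as ``mildly delicate'' and then dismiss. The homomorphism property gives $\theta_m\bigl(g_m(x)f_m(x)\bigr)=\theta_m\bigl(g_m(x)\bigr)\theta_m\bigl(f_m(x)\bigr)$ \emph{in the abstract group} $A_m\times B_m$, but the second bracket of your display is a product taken \emph{inside $\Gs$} of the two lifted elements. Writing $\theta_m(g_m(x))=(a_1,b_1)$ and $\theta_m(f_m(x))=(a_2,b_2)$ and using the section $(a,b)\mapsto ab$, the second bracket equals $a_1b_1a_2b_2$, while the lift of $\theta_m\bigl(g_m(x)f_m(x)\bigr)=(a_1a_2,b_1b_2)$ is $a_1a_2b_1b_2$; these differ by $a_1\bigl(b_1a_2b_1^{-1}a_2^{-1}\bigr)a_1^{-1}\in\Gsp$, nontrivial whenever $b_1$ and $a_2$ do not commute, which is the generic situation for $m\geq 1$ under \textbf{(H)} since $\Gsp$ is nontrivial precisely because $A_m$ and $B_m$ fail to commute. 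In other words the set-theoretic section is not a group homomorphism, so the second bracket is not the distinguished coset representative and uniqueness of the decomposition cannot be invoked to identify the first bracket with $(g_mf_m)'(x)$. A quick sanity check: take $g'_m\equiv f'_m\equiv\neutre$; the claimed formula returns $(g_mf_m)'\equiv\neutre$, whereas the true $(g_mf_m)'(x)$ is the nontrivial conjugate above. That said, the paper's own proof contains exactly the same oversight --- it asserts without justification that $\theta_m(g_m)\theta_m(f_m)$ ``takes values in $A_m\times B_m$'' viewed as a subset of $\Gs$ --- so you have faithfully reproduced its reasoning; what survives, and what the downstream support estimates in \cref{Lmm:DefTnBZ} actually need, is that $(g_mf_m)'$ equals $g'_m\theta_m(g_m)f'_m\theta_m(g_m)^{-1}$ times a $\Gsp$-valued correction whose support is controlled by those of $\theta^B_m(g_m)$ and $\theta^A_m(f_m)$.
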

\begin{proof}
  Since  $g_m=\theta_m(g_m) g^{\prime}_m$ and  $f_m=\theta_m(f_m) f^{\prime}_m$
  we can write
  \begin{align*}
    g_mf_m \
    = \ g^{\prime}_m\theta_m(g_m) \cdot f^{\prime}_m\theta_m(f_m)\ 
    = \ g^{\prime}_m\theta_m(g_m)f^{\prime}_m\theta_m(g_m)^{-1}\theta_m(g_m)\theta_m(f_m).
  \end{align*}
  But $\theta_m(g_m)\theta_m(f_m)$ takes values in $A_m\times B_m$ and
  $\Gsp$ is a normal subgroup of $\Gamma_m$ thus the map
  $g^{\prime}_m\theta_m(g_m)f^{\prime}_m\theta_m(g_m)^{-1}$ takes
  values in $\Gsp$. Hence the claim.
\end{proof}
Combining Lemma 2.7 and Fact 2.9 of
\cite{BZ}, we get the following result.

\begin{Lmm}
  \label{Lmm:deffm}
  Let $(\mbfg, t) \in \BZ$. For all $m\in \bN$
  and $x\in \bZ$
  \begin{align*}
    g_m(x)&=g^\prime_m(x) \theta^A_m\big(g_m(x)\big)\theta^B_m\big(g_m(x)\big)\\
    &=g^\prime_m(x) \theta^A_m\big(g_0(x)\big)\theta^B_m\big(g_0(x-k_m)\big).
  \end{align*}
  In particular the sequence $\mbfg=\left( g_m \right)_{m\in \bN}$ is uniquely determined
  by $g_0$ and $\left( g^\prime_m \right)_{m\in \bN}$. 
\end{Lmm}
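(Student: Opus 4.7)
The plan is to split the statement into three pieces — the first equality, the second equality, and the uniqueness claim — and treat them in order. The first equality is essentially a restatement of the definition of $g^\prime_m$: by construction, $g^\prime_m : \bZ \to \Gsp$ is the unique map such that $g_m(x) = g^\prime_m(x)\theta_m(g_m(x))$ for every $x$, and hypothesis \textbf{(H)} identifies $\Gs/\Gsp$ with $A_m \times B_m$, so $\theta_m(g_m(x))$ factors uniquely as $\theta^A_m(g_m(x))\theta^B_m(g_m(x))$.

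For the second equality, I would proceed by induction on the length $n$ of a word $(\mbfg, t) = s_1 \cdots s_n$ in $S_\BZ$, proving simultaneously that $\theta^A_m(g_m(x)) = \theta^A_m(g_0(x))$ and $\theta^B_m(g_m(x)) = \theta^B_m(g_0(x-k_m))$ (with the canonical identifications $A_0 \simeq A_m$ and $B_0 \simeq B_m$). The base case is trivial; for the inductive step, write $(\mbfh, \tau) = s_1 \cdots s_{n-1}$ and examine the three generator types of $s_n$. A cursor generator $((\mathrm{id})_m, 1)$ leaves every lamp configuration unchanged. An $A$-generator $((a_m\delta_0)_m, 0)$ right-multiplies $h_m(\tau)$ by $a_m$ on every level $m$ simultaneously, and in particular sends $h_0(\tau)$ to $h_0(\tau)a_0$, so both $\theta^A_m(g_m(\tau))$ and $\theta^A_m(g_0(\tau))$ acquire the same factor while the $B$-projection is unaffected. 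A $B$-generator $((b_m\delta_{k_m})_m, 0)$ right-multiplies $h_m$ at position $\tau+k_m$ on level $m$ but right-multiplies $h_0$ at position $\tau+k_0 = \tau$ on level $0$; hence, at $x = \tau+k_m$ on level $m$, the $B$-projection picks up the same factor as at $x-k_m = \tau$ on level $0$, which is exactly the shift recorded by the formula. Uniqueness then follows at once: the second equality expresses $g_m(x)$ as $g^\prime_m(x)\theta^A_m(g_0(x))\theta^B_m(g_0(x-k_m))$, so $(g_m)_m$ is recovered pointwise from $g_0$ and $(g^\prime_m)_m$.

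The main subtlety is that $a_m$ and $b_m$ do not commute, so products of generators create commutator-valued contributions — this is precisely what the $g^\prime_m$ term absorbs and why \cref{Claim:DeriveeProduit} is needed to keep track of it. However, for the two projections $\theta^A_m$ and $\theta^B_m$ the issue evaporates, since every such commutator lies in $\Gsp = \ker \theta_m$ by hypothesis \textbf{(H)}; in particular the projections are oblivious to the order in which the generators are applied, and the induction goes through without having to unravel $g^\prime_m$. The fiddliest bookkeeping is remembering that the offset $k_m$ is asymmetric between level $m$ and level $0$ — and this asymmetry is the entire source of the shift $x \mapsto x - k_m$ in the statement.
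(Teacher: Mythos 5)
Your proof is correct, and it is worth noting that the paper itself gives no argument here: the lemma is stated as a direct combination of Lemma~2.7 and Fact~2.9 of \cite{BZ}, so the ``official'' justification is a citation rather than a derivation. You instead supply a self-contained induction on word length over $S_\BZ$, which is a genuinely different (and more explicit) route. The three pieces line up: the first equality is definitional once one unwinds $g^\prime_m$ and the identification $\Gs/\Gsp \simeq A_m\times B_m$ from \textbf{(H)}; the second equality reduces, via your induction and the generator-by-generator case check, to the two observations that an $A$-generator deposits the same $A$-factor at position $\tau$ on every level simultaneously (including level $0$, since $k_0=0$), while a $B$-generator deposits its $B$-factor at position $\tau+k_m$ on level $m$ but at position $\tau$ on level $0$, which is exactly the shift $x\mapsto x-k_m$; and uniqueness is then immediate from the displayed formula. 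You are also right that commutator contributions are invisible to $\theta^A_m$ and $\theta^B_m$ because they land in $\Gsp=\ker\theta_m$, which is what lets the induction avoid tracking $g^\prime_m$ at all. The trade-off between your approach and the paper's is transparency versus brevity: citing \cite{BZ} keeps the paper short, while your induction makes the mechanism (and the role of the offset $k_m$) visible to a reader who has not internalized the Brieussel--Zheng formalism.
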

In the next subsection we are going to see that we actually need only a
\emph{finite} number of elements of the sequence $(g^{\prime}_m)_{m \in \bN}$ to
characterise $\mbfg$.

\subsection{Range and support}
In this subsection we introduce the notion of \emph{range} of an element $(\mbfg,t)$ in $\BZ$
and link it to the supports of the lamp configurations $(g_m)_{m\in \bN}$.

\subsubsection{Range}
We denote by $\pi_{2} : \Delta \rightarrow \bZ$ the projection on the
second factor and for all $n\in \bN$ denote by $\landing(n)$ the integer such
that $k_{\landing(n)}\leq n< k_{\landing(n)+1} $.
\begin{Def}\label{Def:Range}
  If $w=s_1\ldots s_m$ is a word over $S_\BZ$ we define its
  \deffont{range}\index{Range!Of a word} as
  \begin{equation*}
    \range(w) := \left\{\pi_2\left(\prod^{i}_{j=1}s_j \right) \,
      | \, i=0,\ldots,n \right\}.
  \end{equation*}
\end{Def}
The range is a finite subinterval of $\bZ$. It represents the set of sites
visited by the cursor. 
\begin{Def}\label{Def:Rangedelta}\index{Range!Of an element}
  The \deffont{range} of an element $\delta \in \Delta$ is defined as the
  diameter of a minimal range interval of a word over $S_{\BZ}$ representing $\delta$.
\end{Def}
In what follows we will consider elements that can be written as a word with range
in an interval of the form $[0,n]$, where $n$ belongs to $\bN$. Therefore,
when there is no ambiguity we will denote $\range(\delta)$ this interval,
namely $\range(\delta)=[0,n]$.

\begin{Ex}
  Let $(\mbfg,0) \in \BZ$ such that
  $\range(\mbfg,0)=[0,6]$, that is to say: the cursor can only
  visit sites between $0$ and $6$. Then the map $g_m$ can “write” elements of
  $A_m$ only on sites visited by the cursor, that is to say from $0$ to $6$, and
  it can write elements of $B_m$ only from $k_m$ to $6+k_m$. Thus $g_0$ is
  supported on $[0,6]$, since $k_0=0$. Moreover, commutators (and hence elements
  of $\Gsp$) can only appear between $k_m$ and $6$, thus $\supp(g^\prime_m)
  \subseteq [k_m,6]$. In particular $\supp(g^\prime_m)$ is empty when $k_m>6$.
  
  Such a $(\mbfg,0)$ is represented in \cref{fig:elementdeDelta} for $k_m=2^m$.
\end{Ex}
\begin{figure}[htbp]
  \centering
  \includegraphics*[width=\textwidth]{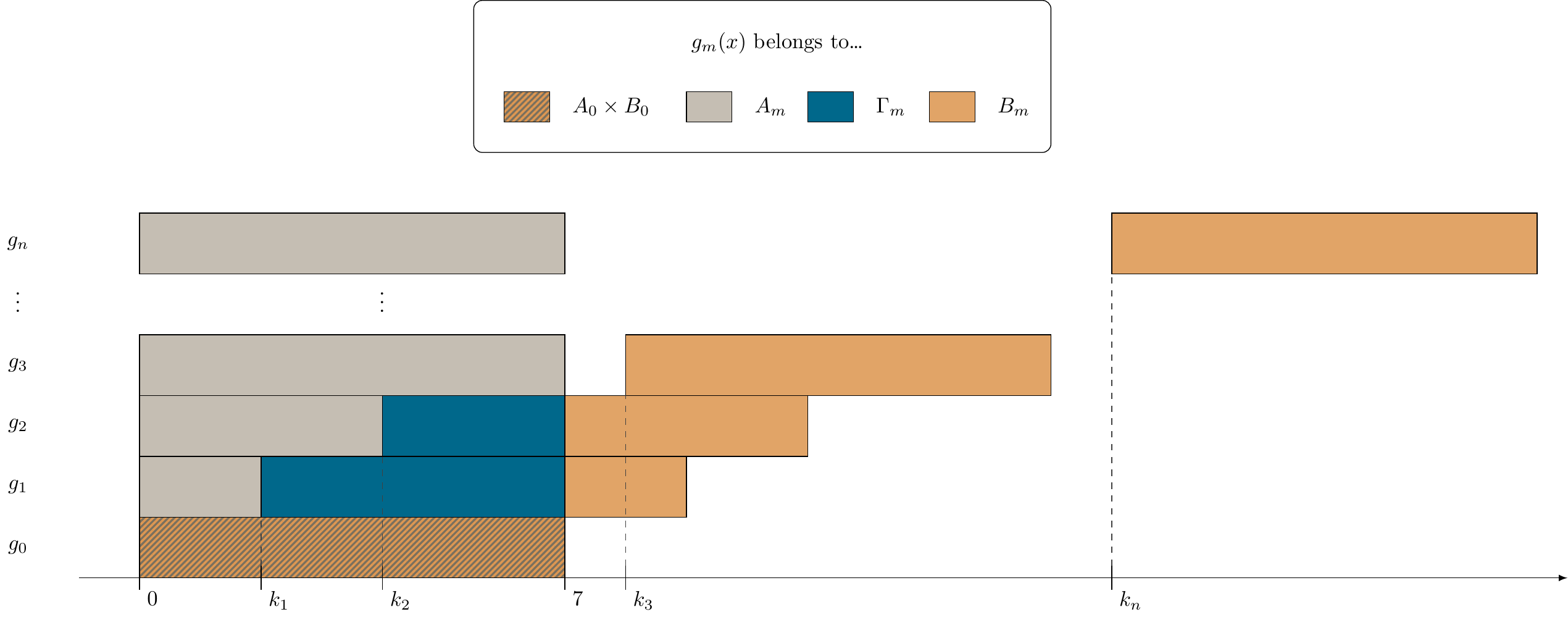} 
  \caption{An element of $\BZ$}
  \label{fig:elementdeDelta}
  \vspace{0.5cm}
  {\small 
    Recall that $g_m$ : $\bZ \rightarrow
    \Gamma_m$. If $m\leq \landing(6)$, then $g_m(x)$ belongs to $A_m$ if $x \in
    [0,k_m-1]$, it belongs to $\Gs$ if $x\in [k_m,6]$ and to $B_m$ if $x\in [7,6+k_m]$ and
    equals $\neutre$ elsewhere. If $m>\landing(6)$ then $g_m(x)$ belongs to $A_m$ if $x \in
    [0,6]$ and to $B_m$ if $x\in [k_m,6+k_m]$ and equals $\neutre$ elsewhere.
  }
\end{figure}

Let us now recall a useful fact proved in \cite{BZ}.
\begin{Claim}[{\cite[Fact 2.9]{BZ}}]
  An element $(\mbfg,t) \in \BZ$ is uniquely determined by $t$, $g_0$ and the sequence
  $ (g^{\prime}_m)_{m\leq \landing(\range(\mbfg,t))}$. 
\end{Claim}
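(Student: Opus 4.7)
The plan is to combine \cref{Lmm:deffm} with a support analysis of the derived function $g^\prime_m$. By \cref{Lmm:deffm}, the lamp sequence $\mbfg$ is uniquely determined by $g_0$ together with the entire sequence $(g^\prime_m)_{m\in\bN}$. Hence to obtain the claim it is enough to show that $g^\prime_m = \mathrm{id}$ for every $m$ such that $m > \landing(R)$, where $R := \range(\mbfg, t)$.

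To prove this vanishing I would analyse the effect of the generators of $\BZ$ on the lamp configuration. Fix a word $w = s_1 \cdots s_N$ over $S_\BZ$ representing $(\mbfg, t)$ whose cursor trajectory is contained in an interval of diameter $R$. The three types of generators respectively shift the cursor, multiply the lamp at the cursor's current position by an element of $A_m$, or multiply the lamp at a position offset from the cursor by $k_m$ by an element of $B_m$. Consequently, for each site $x \in \bZ$, the value $g_m(x)$ is a product of letters from $A_m$ (contributed whenever the cursor sat at $x$) and from $B_m$ (contributed whenever the cursor sat at a position differing from $x$ by exactly $k_m$). Since the cursor is confined to an interval of diameter $R$, the set of sites $x$ where contributions of both types can occur is an interval of length at most $R - k_m$, which is empty as soon as $k_m > R$. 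At a site $x$ outside this intersection, $g_m(x)$ is a product of elements lying entirely in $A_m$ or entirely in $B_m$; in particular it lies in the transversal $A_m B_m$ of $\Gsp$ in $\Gs$, so $g^\prime_m(x) = \mathrm{id}$.

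Applying this observation: for $m > \landing(R)$ we have $k_m > R$ by definition of $\landing$, so the empty-intersection regime applies at every site $x$ and therefore $g^\prime_m \equiv \mathrm{id}$. Combined with \cref{Lmm:deffm}, this shows that $\mbfg$ is completely determined by $g_0$ and by the truncated sequence $(g^\prime_m)_{m \leq \landing(R)}$, which is exactly the claim. The one point requiring some care is making the support statement rigorous; a clean way is to proceed by induction on the word length using \cref{Claim:DeriveeProduit}, verifying at each multiplication that the derived part of the partial product keeps its support inside the interval $[k_m, R]$ (up to the chosen convention). Since each generator's lamp belongs to $A_m \cup B_m$ and therefore has trivial derived part, the induction step reduces to tracking how the support of $g^\prime_m$ is conjugated by $\theta_m(g_m)$, which preserves the bound.
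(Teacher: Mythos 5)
Your argument is correct, and it is essentially the approach the paper has in mind: the paper itself does not prove this claim (it merely cites \cite{BZ}), but the support analysis you carry out is exactly the one the paper performs inside the proof of \cref{Lmm:range}, where it observes that commutators can only be created where the cursor has visited both $x$ and $x-k_m$, so that $\supp(g^\prime_m)\subseteq[k_m,n]$ and in particular $g^\prime_m\equiv\neutre$ as soon as $k_m>n$. Your reduction via \cref{Lmm:deffm} to the vanishing of $g^\prime_m$ for $m>\landing(\range(\mbfg,t))$, made rigorous by an induction on word length using \cref{Claim:DeriveeProduit}, is the intended route.
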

\begin{Ex}
  Consider again $(\mbfg,0) \in \BZ$ such that
  $\range(\mbfg,0)=[0,6]$, which was illustrated in \cref{fig:elementdeDelta}. 
  Since $k_3=8>6$, the element $(\mbfg,0)$ is uniquely determined by the data
  $g_0$ (that is to say, the values read in the bottom line) and the values of
  $g^\prime_i$ for $i=1,2$ (namely, the value taken in the blue area).
  \Cref{fig:troncatureagprime} represents the aforementioned characterizing data. 
\end{Ex}
\begin{figure}[htbp]
  \centering
  \includegraphics*[width=0.55\textwidth]{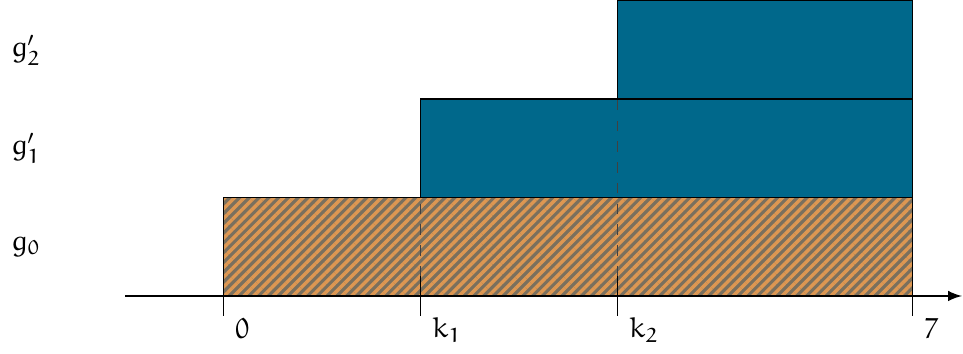} 
  \caption{Data needed to characterise $\mbfg$ such that $\range(\mbfg)\subset
    [0,6]$ when $k_m=2^m$}
  \label{fig:troncatureagprime}
\end{figure}
\subsubsection{Relation between range and support}
Recall that for all $m\in \bN$ we can write
$g_m(x)=g^\prime_m(x)\theta^A_m\big(g_0(x)\big)\theta^B_m\big(g_0(x-k_m)\big)$ and
that $\landing(n)$ denotes the integer such that $k_{\landing(n)}\leq n <
k_{\landing(n)+1}$.

To work with the Følner sequence we compute in \cref{Subsec:FolnerSequence} and
deduce a Følner \emph{tiling} shift from it, we will need to link the range
of $(\mbfg,t)$ in $\BZ$ with the support of $g_0$ and the sequence of supports of
$({g^\prime}_m)_{m \in \bN}$. This is what the following lemma formalises.

\begin{Lmm}\label{Lmm:range}
  Let $n\in \bN$ and take $(\mbfg,t) \in \Delta$.
  Then $\range(\mbfg,t)$ is included in $ [0,n] $ if and only if
  \begin{equation*}
    \begin{cases}
      t\in [0,n] \\
      \supp(g_0) \subset [0, n]\\
      \supp({g^\prime}_m) \subseteq [k_m,  n] \quad &\forall 1\leq m \leq \landing(n)  \\
      {g^\prime}_m\equiv \neutre \quad &\forall m > \landing(n).
    \end{cases}
  \end{equation*}
\end{Lmm}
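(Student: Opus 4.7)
The plan is to prove the two implications separately, relying on the explicit action of the generators of $\BZ$ on the lamp configurations together with the uniqueness claim (BZ Fact 2.9) recalled just before the lemma.

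For the forward implication, I would take a word $w=s_1\cdots s_N$ over $S_\BZ$ representing $(\mbfg,t)$ and whose range interval is contained in $[0,n]$. Writing the partial products as $(\mbfh_i,t_i)$, each cursor position $t_i$ lies in $\range(w)\subset[0,n]$, and in particular $t=t_N\in[0,n]$. For $\supp(g_0)$, I would use that $k_0=0$ and $\Gamma_0=A_0\times B_0$: each generator acts on the $m=0$ coordinate only at the current cursor position, so only sites visited by the cursor can belong to $\supp(g_0)$, giving $\supp(g_0)\subset[0,n]$. For $\supp(g'_m)$ with $m\geq 1$, I would argue that thanks to \cref{Lmm:deffm}, a value $g'_m(x)\neq\neutre$ in the normal closure $\Gsp$ of $[A_m,B_m]$ can only arise if the word contains both an $A$-generator applied while the cursor is at $x$ (writing at position $x$ in the $\Gamma_m$ coordinate) and a $B$-generator applied while the cursor is at $x-k_m$ (writing at position $x$ in the $\Gamma_m$ coordinate). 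Since both cursor positions lie in $\range(w)\subset[0,n]$, we get $x\in[k_m,n]$; and if $m>\landing(n)$ then $k_m>n$ forces this interval to be empty, hence $g'_m\equiv\neutre$.

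For the converse implication, I would use the characterisation of elements with prescribed range given by the claim just above the lemma: an element of $\BZ$ is entirely determined by $t$, $g_0$ and the finite sequence $(g'_m)_{m\leq\landing(n)}$. It thus suffices to exhibit at least one word over $S_\BZ$ whose range lies in $[0,n]$ and whose associated data is exactly $(t,g_0,(g'_m)_{m\leq\landing(n)})$. I would construct such a word explicitly, sweeping the cursor over $[0,n]$ with the shift generator $((\mathrm{id})_m,1)^{\pm 1}$, and:
\begin{itemize}
\item at each $x\in\supp(g_0)\subset[0,n]$, produce the prescribed element of $A_0\times B_0$ using the $(a_m\delta_0,0)$ and (after an appropriate compensating round trip of length $k_m$) the $(b_m\delta_{k_m},0)$ generators, taking $m$ large enough that $k_m>n$ so that no new commutators are introduced in the coordinates $m\leq\landing(n)$;
\item at each $x\in\supp(g'_m)\subset[k_m,n]$ with $1\leq m\leq\landing(n)$, produce the prescribed element of $\Gsp$, which lies in the normal closure of $[A_m,B_m]$, by a product of conjugates of commutators of the form $[(a\delta_0,0),(b\delta_{k_m},0)]$ positioned at cursor location $x-k_m$ so that the commutator lands at site $x$; since $0\leq x-k_m\leq x\leq n$, the cursor stays inside $[0,n]$;
\item finally move the cursor from $0$ to $t\in[0,n]$.
\end{itemize}
By construction the range of the resulting word is contained in $[0,n]$. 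Applying the uniqueness claim, it then represents exactly the prescribed element $(\mbfg,t)$.

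The main obstacle is the second bullet of the construction: since the generators act simultaneously on every $\Gamma_{m'}$, writing a commutator in the $\Gamma_m$ coordinate also writes elements of $A_{m'}\times B_{m'}$ (and possibly commutators) in the other coordinates $m'\neq m$. To handle this, I would exploit hypothesis \textbf{(H)} — in particular the fact that $\Gamma_{m'}\simeq A_{m'}\times B_{m'}$ modulo $\langle\langle[A_{m'},B_{m'}]\rangle\rangle$ and that $k_{m+1}\geq 2k_m$ — to show that the spurious contributions produced in coordinates $m'>\landing(n)$ are supported inside $[0,n]\cup[k_{m'},n+k_{m'}]$, where they contribute to $g_0$ or fall outside the effective window, so that using the formula of \cref{Lmm:deffm} they can be absorbed into the $g_0$ step performed separately without violating the support conditions. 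This bookkeeping is the real technical content of the converse direction.
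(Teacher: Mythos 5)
The forward direction of your proposal is essentially the paper's argument and is fine: partial products give cursor positions in $[0,n]$, $A$-elements are written at the cursor, $B$-elements $k_m$ ahead of it, and commutators can only arise where both a site $x$ and $x-k_m$ are visited, giving $\supp(g'_m)\subseteq[k_m,n]$ and $g'_m\equiv\neutre$ when $k_m>n$.

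The converse is where there is a real gap. Your overall strategy (build an explicit word over $S_\BZ$ with range in $[0,n]$ realizing the data $\bigl(t,g_0,(g'_m)_{m\leq\landing(n)}\bigr)$ and invoke Fact~2.9) is the same as the paper's, but the bookkeeping you flag as ``the real technical content'' is left unresolved and the remedies you sketch do not work. Two concrete issues. First, the generating set $S_\BZ$ does not let you ``take $m$ large enough that $k_m>n$'': the generators are $\bigl((a_m\delta_0)_m,0\bigr)$ and $\bigl((b_m\delta_{k_m})_m,0\bigr)$, acting on \emph{all} coordinates $m$ at once; the index $m$ is a running index of the tuple, not a parameter you choose. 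To write $(a_0,b_0)$ at $g_0(x)$ you simply apply the $a$- and $b$-generators at cursor position $x$ (recall $k_0=0$), and the $m'$-th coordinate then acquires $a_{m'}$ at site $x$ and $b_{m'}$ at site $x+k_{m'}$ — which is exactly the $\theta^A_{m'}\theta^B_{m'}$ part prescribed by \cref{Lmm:deffm}, so nothing needs to be compensated there. Second, and more importantly, the ``spurious contributions'' you worry about when writing a commutator $[a_m,b_m]$ at $g_m(x)$ do not need to be absorbed into the $g_0$ step: they cancel identically. The commutator word (move to $x$, write $a$, move to $x-k_m$, write $b$, move back, write $a^{-1}$, write $b^{-1}$) produces, in each coordinate $m'\neq m$, the pointwise product $a_{m'}a_{m'}^{-1}$ at site $x$ and $b_{m'}b_{m'}^{-1}$ at site $x-k_m+k_{m'}$, i.e.\ nothing. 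This is precisely what \cref{Ex:Commutateur} computes, and the paper's proof of the converse cites it (``applying \cref{Ex:Commutateur} with $x$ instead of $k_1$'') as the key mechanism; the same cancellation persists when one conjugates the commutator to reach a general element of $\Gsp$. Since your proposal neither observes this cancellation nor supplies a correct substitute, the converse direction is not proved as written. To repair it, replace the ``absorption'' argument with the exact cancellation of \cref{Ex:Commutateur}, then the cursor-range check in the paper (write $a$ at $x$ needs cursor in $[0,x]$, write $b$ at $x$ needs cursor in $[0,x-k_m]$, both inside $[0,n]$) closes the argument.
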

\begin{proof}
  Let $n \in \bN$ and first assume that $\range(\mbfg,t)\subseteq [0,n]$, that
  is to say: the cursor 
  can only visit sites between $0$ and $n$. Let $(\mbfg,t)=\prod^{l}_{i=0} s_i$
  be a decomposition in a product of elements of $S_{\BZ}$ with range of minimal length.
  Let $m\in \bN$, then by definition of 
  $S_{\BZ}$, an element $s_i$ can “write” elements of $A_m$ only between $0$ and
  $n$, and it can write elements of $B_m$ only between $k_m$ and $n+k_m$. Thus $g_0$ is
  supported on $[0,n]$, since $k_0=0$. And commutators can only appear between
  $k_m$ and $n$, hence $\supp(g^\prime_m)\subseteq [k_m,n]$. In particular if
  $k_m>n$ then $g^\prime_m \equiv \neutre$. Finally we obtain that $t$ belongs
  to $[0,n]$ by noting that $t=\pi_2\left(\prod^{l}_{j=1}s_j \right)$.

  Now let us prove the other way round. Consider $m\in [1,\landing(n)]$ then
  $g^{\prime}_m(x)\in \Gsp$. It is therefore a product of conjugates of
  commutators of the form $[a_m,b_m]$, where $a_m\in A_m$ and $b_m\in B_m$.
  Applying \cref{Ex:Commutateur} with $x$ instead of $k_1$ we can show that we
  can write $[a_m,b_m]$ at $g_m(x)$ without changing any other entry in $\mbfg$
  (see also \cref{fig:Commutateur}). In a similar way, we can write a conjugate
  of $[a_m,b_m]$ at $g_m(x)$ without changing any other entry in $\mbfg$.
  Finally writing $(a_0,b_0)$ at the entry $g_0(x)$ writes $a_m$ at $g_m(0)$ and
  $b_m$ at $g_m(k_m)$ (see also \cref{fig:ab}). Therefore using
  \cref{Lmm:deffm} we can obtain 
  $(\mbfg,0)$ by first considering the word in $S_\BZ$ that writes
  all the values of $g_0$, then multiplying it on the left by a word that writes
  the value of $g^{\prime}_1$, and continue this process to write all
  $g^{\prime}_m$ for $m\leq \landing(n)$.

  Let us now check that the cursor
  remains in $[0,n]$ when writing $g_0$ and $g^{\prime}_m$.
  Take $m\in [1,\landing(n)]$, then $k_m\leq n$ and $\supp(g^\prime_m)$ is
  contained in $[k_m,n]$. Now let $x \in \supp(g^\prime_m)\subseteq [k_m,n]$.
  Since $\Gsp \subseteq \Gamma_m$, which is generated by $A_m \times
  B_m$, we can decompose $g^\prime_m(x)$ as a product of elements in $A_m$ and
  $B_m$. To write some $a_m\in A_m$ at the position $x$ the cursor needs to
  visit sites in $[0,x]$. To write some $b_m\in B_m$ it needs to visit sites in
  $[0,x-k_m]$. Therefore, the cursor remains in $[0,n]$ when writing $g_m(x)$ at
  position $x$. 
  Finally, for all $x$
  the cursor needs only to visit position $x$ in order to write $g_0(x)$. Since
  $\supp(g_0)$ is contained in $[0,n]$ then the cursor needs only to visit sites
  between $0$ and $n$.

  Combining what precedes with \cref{Lmm:deffm} and the hypothesis that $t\in
  [0,n]$, we get that the cursor needs only
  to visit cites between $[0,n]$ to write $(\mbfg,t)$.
  Hence the lemma.
\end{proof}

\subsubsection{Følner sequence}\label{Subsec:FolnerSequence}
In this subsection we describe a Følner sequence $(F_n)_{n\in \bN}$ for $\BZ$.
Recall that $\landing(n)$ denotes the integer such that $k_{\landing(n)}\leq n <
k_{\landing(n)+1}$.

\begin{Prop} \label{Prop:FolnerSequence}
  The following sequence is a Følner sequence of $\BZ$
  \begin{equation*}
    F_n:= \left\{ \left(\mbff, t \right) \mid 
       \range \left( \mbff,t\right) \subseteq \{0, \ldots, n-1\} \right\}.
  \end{equation*}
\end{Prop}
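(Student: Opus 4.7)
The plan is to use the range characterisation from \cref{Lmm:range} to parametrise $F_n$ explicitly, and then verify the Følner condition $|sF_n\triangle F_n|/|F_n|\to 0$ for each element of the generating set $S_\BZ$ individually. By \cref{Lmm:range}, an element $(\mbff,t)\in F_n$ is uniquely determined by the cursor $t\in\{0,\ldots,n-1\}$, the map $f_0:\{0,\ldots,n-1\}\to\Gamma_0$ and the finite family $(f'_m)_{1\leq m\leq \landing(n-1)}$ with $f'_m:\{k_m,\ldots,n-1\}\to\Gsp$, so $F_n$ is in bijection with
$$\{0,\ldots,n-1\}\times\Gamma_0^{\{0,\ldots,n-1\}}\times\prod_{m=1}^{\landing(n-1)}(\Gsp)^{\{k_m,\ldots,n-1\}}.$$

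For the cursor generator $s=((\mathrm{id})_m,1)$, left multiplication acts simply by $(\mbff,t)\mapsto(\mbff,t+1)$ and leaves $\mbff$ untouched. Hence $F_n\setminus sF_n$ is in bijection with the elements of $F_n$ whose cursor equals $0$, which has cardinality $|F_n|/n$; by $|sF_n|=|F_n|$ the symmetric difference satisfies $|sF_n\triangle F_n|/|F_n|=2/n\to 0$.

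For the lamp generators $s=((a_m\delta_0)_m,0)$ and $s=((b_m\delta_{k_m})_m,0)$, I would apply \cref{Claim:DeriveeProduit} together with \cref{Lmm:deffm} to read off how $s\cdot\mbff$ decomposes. In both cases the cursor is unchanged; the $\Gamma_0$-component is modified only at position $0$, which lies in $[0,n-1]$, so $\supp$ remains inside $[0,n-1]$. For $m\geq 1$, because the generator is supported at a single site with value in $A_m$ or $B_m$ (hence with trivial commutator part), \cref{Claim:DeriveeProduit} shows that the new $g'_m$ differs from $f'_m$ only at that site, where its value is a conjugate in $\Gs$ of $f'_m$ at that site. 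Using the normality of $\Gsp$ in $\Gs$, this conjugate still lies in $\Gsp$, and because $\supp(f'_m)\subseteq[k_m,n-1]$ forces $f'_m(0)=\neutre$ (for the $a$-generator, when $k_m\geq 1$), while $k_m\in[k_m,n-1]$ (for the $b$-generator, when $k_m\leq n-1$), the new support still fits inside $[k_m,n-1]$; for $m>\landing(n-1)$ all the values involved are already trivial, so the new $g'_m$ stays identically $\neutre$. This yields $sF_n\subseteq F_n$, and cardinality equality gives $sF_n=F_n$.

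Combining these estimates gives $|sF_n\triangle F_n|/|F_n|\to 0$ for every $s\in S_\BZ$, establishing the Følner property. The genuinely delicate step is the lamp-generator case: one has to track that the single site affected by the conjugation in \cref{Claim:DeriveeProduit} never escapes the admissible interval $[k_m,n-1]$, which is exactly where the normality of $\Gsp$ in $\Gs$ and the lower bound $k_m\geq 1$ for $m\geq 1$ come into play.
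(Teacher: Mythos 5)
There is a genuine error in the cursor-generator step. You assert that ``left multiplication acts simply by $(\mbff,t)\mapsto(\mbff,t+1)$ and leaves $\mbff$ untouched,'' but in the wreath-product convention used in the paper, $(\mbfg, s)(\mbff, t) = \big((g_m f_m(\cdot - s))_m, s+t\big)$ (see the product formula in the proof of \cref{Lmm:DefTnBZ}), left multiplication by $(\mathrm{id},1)$ gives $\big((f_m(\cdot - 1))_m, t+1\big)$ and shifts every lamp support by $+1$; the formula $(\mbff,t)\mapsto(\mbff,t+1)$ is that of \emph{right} multiplication. Once the shift is accounted for, \cref{Lmm:range} shows that $(\mathrm{id},1)(\mbff,t)$ stays in $F_n$ only when $t\leq n-2$, $f_0(n-1)=\neutre$, and $f'_m(n-1)=\neutre$ for every $m\leq\landing(n-1)$ — conditions satisfied by a vanishing fraction of $F_n$. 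So $|(\mathrm{id},1)F_n\setminus F_n|/|F_n|\to 1$ rather than $2/n$, and $F_n$ is in fact \emph{not} a left Følner sequence; the left Følner condition you set out to verify is not the one that holds.

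The paper establishes the \emph{right} Følner condition: for $\delta\in F_n$ and $s\in S_\BZ$ it checks whether $\delta s\in F_n$. For lamp generators this is immediate — since $\pi_2(s)=0$, appending $s$ to any word for $\delta$ leaves the range unchanged (\cref{Def:Range}), so $\range(\delta s)\subseteq\range(\delta)$, with no need for \cref{Claim:DeriveeProduit} or the support analysis. For the cursor generator, $\delta\cdot(\mathrm{id},1)=(\mbff,t+1)$, which stays in $F_n$ unless $t=n-1$ (similarly $t=0$ for $(\mathrm{id},-1)$), recovering the $2/n$ bound. Your tracking of $g'_m$ under \emph{left} multiplication by lamp generators is correct and does show that those generators preserve $F_n$ from the left, but you must commit to one side throughout: with left multiplication the cursor step fails, and with right multiplication the lamp step collapses to a one-line observation.
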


\begin{proof}
  Let $n\in \bN$ and $\delta:=(\mbff,t)\in F_n$. Remark that since
  $\delta$ belongs to $ F_n$, \cref{Lmm:range} implies that $t$ belongs to $\{0,\ldots,n-1\}$. Now let
  $s_1,\ldots,s_l \in S_{\BZ}$ such
  that $\delta=s_1\cdots s_l$ and take $s_{l+1}\in S_{\BZ}$. If
  $s_{l+1}=\big((a_m\delta_0),0 \big)$ for some $a \in A$ or if
  $s_{l+1}=\big((b_m\delta_{k_m}),0 \big)$ for some $b\in B$ then
  since the cursor of $s_{l+1}$ equals $0$,
  \begin{equation*}
   \range (\delta s_{l+1}) =\left\{\pi_2\left(\prod^{i}_{j=1}s_j \right) \,
      | \, i=1,\ldots,l+1 \right\} = \range(\delta).
  \end{equation*}
  Thus $\delta s_{l+1}\in F_n$.
  Finally denote by $[x,y]$ the range of $\delta$. Using the same formula
  as above we get
  \begin{align*}
   \range (\delta\cdot (\mathrm{id},1))\subseteq [x,y+1] &\quad \text{if} \ t=y,\\
   \range (\delta\cdot (\mathrm{id},1))\subseteq [x,y] &\quad \text{if} \ t<y.
  \end{align*}
  Hence for all $t<n-1$ we have $\range(\delta\cdot (\mathrm{id},1)) \subseteq
  [0,n-1]$. Now if $t=n-1$ then the cursor of $\delta
  (\mathrm{id},1)$ visits the site $n$, thus $\range(\delta\cdot
  (\mathrm{id},1))$ is not included in $[0,n-1]$ and therefore
  $\delta(\mathrm{id},1)$ does not belong to $F_n$. 

  A similar argument shows that $\delta(0,-1)$ belongs to $F_n$ if and only if
  $t \neq 0$. 
  Hence $\partial F_n= \left\{\left(\mbff , t \right)\in F_n \mid
    t\in \{0,n\} \right\}$ and thus
  \begin{equation*}
    {\left\vert \partial F_n \right\vert}/{\left\vert F_n \right\vert}=
    {2}/{n} \underset{n \rightarrow \infty}{\longrightarrow} 0.
  \end{equation*}
\end{proof}
\subsection{From the isoperimetric profile to the
  group}\label{Subsec:FromIPtoBZ}
We saw how to define a diagonal product from two sequences $(k_m)_m$ and
$(l_m)_m$. In this section we recall the definition given in \cite[Appendice
B]{BZ} of a Brieussel-Zheng group from its isoperimetric profile. 
We conclude with some useful results concerning the metric of these groups.
 
\subsubsection{\texorpdfstring{Definition of $\Delta$}{Definition of the
    diagonal product}}
Recall that in the particular case of expanders (see \cref{subsec:Expanders}) a
Brieussel-Zheng group is uniquely determined by the sequences
$(k_m)_{m\in \bN}$ and $(l_m)_{m\in \bN}$ (where $l_m$ corresponds to the
diameter of $\Gs$).
Thus, starting from a prescribed function $\rho$, we will define sequences
$(k_m)_{m\in \bN}$ and $(l_m)_{m\in \bN}$ such that the corresponding $\BZ$ 
verifies $\profile_{\BZ} \simeq \rho \circ \log$. 
Let
\begin{equation*}
  \calC:=\left\{ \zeta :[1,+\infty) \rightarrow [1,+\infty) \ \left\vert
    \begin{matrix}
      \zeta \ \text{continuous}, \ \ \zeta(1)=1 \\
      \zeta \ \text{and} \ x\mapsto x/\zeta(x) \text{non-decreasing}
    \end{matrix} \right.
  \right\}.
\end{equation*}
Equivalently this is the set of functions $\zeta$ satisfying $\zeta(1)=1$ and
\begin{equation}
  \label{Rq:IneqTildeRho}
 \left( \forall x,c\geq 1 \right) \quad \zeta(x)\leq \zeta(cx) \leq c\zeta(x).  
\end{equation}
So let $\rho \in \calC$. 
Combining \cite[Proposition B.2 and
Theorem 4.6]{BZ} we can show the following result (remember that with our
convention the isoperimetric profile considered in \cite{BZ} corresponds to
$1/I_{\BZ}$).

\begin{Prop}\label{Prop:DefdeDelta}
  Let $\kappa, \lambda \geq 2$. For any $\rho \in \calC$ there exists a
  subsequence $(k_m)_{m\in \bN}$ of $(\kappa^n)_{n\in \bN}$ and a subsequence
  $(l_m)_{m\in \bN}$ of $(\lambda^n)_{n\in \bN}$ such that the group $\BZ$ defined in
  \cref{subsec:Expanders} verifies $\profile_{\BZ}(x) \simeq \rho \circ \log$.
\end{Prop}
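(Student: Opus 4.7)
The plan is to treat this proposition as essentially an application of two results from Brieussel-Zheng, namely [BZ, Th.~4.6] (which computes $\profile_{\BZ}$ in terms of the sequences $(k_m)_m$ and $(l_m)_m$) and [BZ, Prop.~B.2] (which, given a prescribed function, extracts compatible geometric sequences). The whole point is to match the hypotheses of these two results to the setting at hand, namely functions $\rho\in\calC$ and subsequences of $(\kappa^n)_n$, $(\lambda^n)_n$.

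First I would unpack what [BZ, Th.~4.6] says in our conventions. Under hypothesis \textbf{(H)}, the diameter of $\Gamma_m$ is comparable to $l_m$ and, by \eqref{eq:encadrementlngammap}, $\log|\Gamma_m|\simeq l_m$. Brieussel--Zheng show that the isoperimetric profile $\Lambda_\BZ$ (using their convention $\Lambda_\BZ=1/\profile_\BZ$) is piecewise controlled by $1/l_m$ on scales roughly $n\simeq k_m|\Gamma_m|^{k_m}$, so that after converting to our convention, $\profile_\BZ(n)\simeq l_m$ whenever $\log n$ lies in a corresponding scale governed by $k_m$ and $l_m$. Thus getting $\profile_\BZ\simeq \rho\circ\log$ amounts to choosing $(k_m)_m$ and $(l_m)_m$ so that, at every scale, $l_m\simeq\rho(\log n)$ for $\log n$ in the appropriate window. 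This is exactly the kind of inversion problem handled by [BZ, Prop.~B.2].

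Next I would apply [BZ, Prop.~B.2] to our $\rho\in\calC$. The function class $\calC$ is precisely the natural class of admissible profiles: $\rho$ is non-decreasing with $\rho(1)=1$, and $x/\rho(x)$ is non-decreasing, which by \eqref{Rq:IneqTildeRho} gives the key doubling-type inequality $\rho(cx)\leq c\rho(x)$. This doubling control is what lets us realize $\rho$ as a limit of piecewise-constant step functions whose jumps occur at geometrically spaced arguments; one then reads off a subsequence $(l_m)_m$ of $(\lambda^n)_n$ representing the values of $\rho$ and a subsequence $(k_m)_m$ of $(\kappa^n)_n$ representing the scales at which jumps occur. The condition $\kappa,\lambda\geq 2$ is enough room both to satisfy $k_{m+1}\geq 2k_m$ and to let the sub-exponential growth of $\rho$ be approximated: thanks to $\rho(cx)\leq c\rho(x)$, successive ratios $l_{m+1}/l_m$ can be kept in $[1,\lambda]$ (and similarly for $k_m$), so extraction as subsequences of geometric sequences is possible.

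Finally I would close the loop by plugging the extracted sequences back into [BZ, Th.~4.6]. On the one hand, the resulting $\BZ$ is well-defined and satisfies \textbf{(H)} by construction (expanders are fixed in advance with $\diam\Gamma_m\simeq l_m$, and the growth conditions on $(k_m)_m$, $(l_m)_m$ are exactly those of \textbf{(H)}). On the other hand, the profile formula of [BZ, Th.~4.6] combined with the calibration chosen in the previous paragraph yields $\profile_\BZ(x)\simeq \rho\circ\log(x)$, once one passes between the $\Lambda$ and $\profile$ conventions. The main difficulty, and the only one beyond citation, is this bookkeeping step: carefully verifying that the discrete values $(l_m)$ arising from the geometric extraction interpolate $\rho\circ\log$ up to multiplicative constants on all scales, which is where the doubling inequality \eqref{Rq:IneqTildeRho} is indispensable.
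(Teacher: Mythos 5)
Your proposal matches the paper's approach exactly: the paper gives no further argument beyond ``Combining [BZ, Proposition B.2 and Theorem 4.6] we can show the following result,'' together with the remark that one must convert between the conventions $\Lambda_\BZ=1/\profile_\BZ$. Your expansion of what those two results supply and how the doubling inequality for $\rho\in\calC$ feeds into the geometric extraction is a faithful unpacking of the same citation, not a different route.
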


\begin{Ex}[{\cite[Example 4.5]{BZ}}]\label{Ex:Alpha} Let $\alpha >0$. If
  $\rho(x):=x^{1/(1+\alpha)}$ then the diagonal 
  product $\BZ$ defined by $k_m=\kappa^m$ and $l_m=\kappa^{\alpha m}$ verifies
  $I_{\BZ}\simeq \rho \circ \log$. 
\end{Ex}
\subsubsection{Technical tools} 
We recall the intermediate functions
defined in \cite[Appendix B]{BZ} and some of their properties.

Let $\rho \in \calC$ and let $f$ such that $\rho(x)=x/f(x)$. The construction of
a group corresponding to the given isoperimetric profile $\rho \circ \log$ is
based on the approximation of $f$ by a piecewise linear function $\bar{f}$.
For the quantification of orbit equivalence, many of our computations will use
$\bar{f}$ and some of its properties. We recall below all the
needed results, beginning with the definition of $\bar{f}$.

\begin{Lmm}
  \label{Prop:B2} Let $\rho\in \calC$ and $f$ such that $\rho(x)=x/f(x)$.
  Let $(k_m)$ and $(l_m)$ given by \cref{Prop:DefdeDelta} above and $\BZ$ the
  corresponding diagonal product. The function
  $\bar{f}$ defined by 
  \begin{equation}\label{Def:Tilde}
    \bar{f}(x):= \begin{cases}
      l_m & \text{if} \ x\in [k_ml_m,k_{m+1}l_m],\\
      \frac{x}{k_{m+1}} & \text{if} \ x\in [k_{m+1}l_m,k_{m+1}l_{m+1}],
    \end{cases}
  \end{equation}
  verifies $\bar{f} \simeq f$. In particular the map $\rhoaff$ defined by
  $\rhoaff(x)= x/\bar{f}(x)$ verifies $\rhoaff\simeq \rho$. 
\end{Lmm}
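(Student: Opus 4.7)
The plan is to invoke the construction carried out in \cite[Appendix B]{BZ} for the first assertion, and then to deduce the statement about $\bar{\rho}$ by a direct computation exploiting the sublinear growth of $\rho$ encoded in $\calC$.

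For the first assertion $\bar{f} \simeq f$, this is essentially \cite[Proposition B.2]{BZ}. The sequences $(k_m)_{m \in \bN}$ and $(l_m)_{m \in \bN}$ provided by \cref{Prop:DefdeDelta} are extracted from the geometric sequences $(\kappa^n)$ and $(\lambda^n)$ precisely so that the anchor points $(k_m l_m, l_m)$ and $(k_{m+1} l_m, l_m)$ defining the graph of $\bar{f}$ lie within a bounded multiplicative factor of the graph of $f$. To propagate this control from the anchor points to an entire plateau $[k_m l_m, k_{m+1} l_m]$, I would use that $f$ is non-decreasing (since $\rho \in \calC$ and $f(x)=x/\rho(x)$) together with the sublinear estimate $f(cx) \leq c f(x)$ for all $c \geq 1$, which is equivalent to \eqref{Rq:IneqTildeRho}. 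On a linear segment $[k_{m+1} l_m, k_{m+1} l_{m+1}]$, writing $x = k_{m+1} y$ with $y \in [l_m, l_{m+1}]$ gives $\bar{f}(x) = y$, and the same two properties of $f$ yield a two-sided comparison with $f(x)$; the bound $k_{m+1}/k_m, l_{m+1}/l_m \leq \max(\kappa,\lambda)$ ensures that the multiplicative distortion is uniform in $m$.

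For the second assertion, suppose constants $A, C \geq 1$ satisfy $A^{-1} f(x/C) \leq \bar{f}(x) \leq A f(Cx)$ for $x$ large enough, as provided by $\bar{f} \simeq f$. Since $\bar{\rho}(x) = x/\bar{f}(x)$ and $\rho(x) = x/f(x)$, one has the exact identity
\begin{equation*}
  \frac{\bar{\rho}(x)}{\rho(x)} = \frac{f(x)}{\bar{f}(x)}.
\end{equation*}
The upper bound on $\bar{f}$, combined with the sublinearity $f(Cx) \leq C f(x)$, gives $f(x)/\bar{f}(x) \geq 1/(AC)$, so $\rho \leq AC\, \bar{\rho}$. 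The reverse inequality $\bar{\rho} \leq AC\, \rho$ follows symmetrically from the lower bound on $\bar{f}$, again invoking $f(Cx) \leq C f(x)$ to absorb the dilation constant. This proves $\bar{\rho} \simeq \rho$.

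The main obstacle is really the first assertion, whose substance lies entirely in the delicate choice of $(k_m)$ and $(l_m)$ performed in \cite[Appendix B]{BZ}: designing a "staircase" $\bar{f}$ whose plateaus and linear ramps track $f$ up to uniform multiplicative constants. Once that is granted, the passage from $\bar{f} \simeq f$ to $\bar{\rho} \simeq \rho$ is a short computation using only the defining inequalities of the class $\calC$.
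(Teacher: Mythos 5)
Your proposal is correct and matches the paper's approach: the paper itself states this lemma as a recall from \cite[Appendix B]{BZ} (Proposition B.2) with no proof given, and you likewise defer the substance of $\bar{f}\simeq f$ to that reference. Your short computation deducing $\rhoaff\simeq\rho$ from $\bar{f}\simeq f$ via the identity $\rhoaff(x)/\rho(x)=f(x)/\bar{f}(x)$ and the sublinearity $f(cx)\leq cf(x)$ (valid since $f\in\calC$ whenever $\rho\in\calC$) is exactly the straightforward argument the paper's ``in particular'' is pointing at.
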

\begin{Ex}
  If $\rho(x)=x$ then $f(x)=1$ leads to $l_m=1$ for all $m$ and
  $k_m=+\infty$ for all $m\geq 1$. In this case $\BZ=(A\times B)\wr \bZ$.  
\end{Ex}

Remark that both $\bar{f}$ and $\rhoaff$ belong to $\calC$. In particular
they verify \cref{Rq:IneqTildeRho}, which is only true when $c$ and
$x$ are greater than $1$. When $c<1$ we get the following inequality.

\begin{Claim}\label{Claim:IneqTildeRho}
 If $0< c^\prime <1$ and $x^\prime \geq 1/c^\prime$ then $c^\prime\rhoaff(x^\prime)
 \leq\rhoaff(c^\prime x^\prime )$. 
\end{Claim}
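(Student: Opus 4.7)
The plan is a one-line substitution reducing the claim to \cref{Rq:IneqTildeRho} applied to $\rhoaff$. Since \cref{Prop:B2} yields $\rhoaff\in \calC$, we know that $\rhoaff$ satisfies the characteristic inequality
\begin{equation*}
  \rhoaff(x) \leq \rhoaff(cx)\leq c\,\rhoaff(x) \qquad \text{for all } x,c\geq 1.
\end{equation*}
The claim concerns a scaling factor $c'<1$, which is outside this range, but the hypothesis $x'\geq 1/c'$ is precisely what makes the substitution $c:=1/c'$ and $x:=c'x'$ legitimate: both $x\geq 1$ and $c\geq 1$ hold, and $cx=x'$.

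Applying \cref{Rq:IneqTildeRho} to this pair gives $\rhoaff(x')=\rhoaff(cx)\leq c\,\rhoaff(x)=\rhoaff(c'x')/c'$, which upon multiplying by $c'$ yields $c'\rhoaff(x')\leq \rhoaff(c'x')$, as desired.

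There is no real obstacle here; the only point to double-check is that the hypothesis $x'\geq 1/c'$ is sharp in the sense that it guarantees $c'x'\geq 1$ so that $\rhoaff(c'x')$ is even defined (recall $\rhoaff$ is defined on $[1,+\infty)$). The monotonicity of $x\mapsto x/\rhoaff(x)$, built into membership in $\calC$, is exactly what powers the underlying inequality \cref{Rq:IneqTildeRho}, so the claim is essentially a restatement of that monotonicity on the rescaled variable.
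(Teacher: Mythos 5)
Your proof is correct and takes essentially the same route as the paper: both substitute $c=1/c'$ and $x=c'x'$ into \cref{Rq:IneqTildeRho} and note that the hypothesis $x'\geq 1/c'$ guarantees $x\geq 1$. Your additional remark that this hypothesis also keeps $c'x'$ in the domain $[1,+\infty)$ of $\rhoaff$ is a useful sanity check the paper leaves implicit.
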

\begin{proof}
  If $0< c^\prime <1$ then $1/c^\prime >1$, thus we can apply \cref{Rq:IneqTildeRho}
  with $c=1/c^\prime$ and $x=c^\prime x$ to obtain
 \begin{equation*} 
  \rhoaff(x^\prime) =\rhoaff\left(\frac{1}{c^\prime} c^\prime x^\prime\right)
      =\rhoaff(cx) \leq c\rhoaff(x)
    = \frac{1}{c^\prime}\rhoaff(c^\prime x^\prime).
  \end{equation*}
\end{proof}

\subsubsection{Metric}

We recall here some useful material about the metric of $\BZ$ and refer to
\cite[Section 2.2]{BZ} for more details. 
First, let $(x)_+:=\max\{x,0\}$.
\begin{Def}\label{Def:Em}
 For $j\in \bZ$ and $m\in \bN$ let $I^m_{j}:=[jk_m/2,(j+1)k_m/2-1]$.\\
 Let $f_m$ : $\bZ \rightarrow
 \Gamma_m$. The \deffont{essential contribution} of $f_m$ is defined as
 \begin{equation*}
   E_m(f_m):= k_m \sum_{j: \range(f_m,t)\cap I^m_j \neq \emptyset}
   \max_{x\in I^m_j} \left( |f_m(x)|_{\Gamma_m}-1 \right)_{+}.
 \end{equation*}
\end{Def}

The following proposition sums up \cite[Lemma 2.13, Proposition 2.14]{BZ}.
\begin{Prop}\label{Prop:metric}
  For any $\delta=(\mbff,t) \in \BZ$ we have
  \begin{align*}
    \left\vert {(\mbff,t)}\right\vert_{\BZ}
    & \leq 500 \sum^{\landing\left( \range(\delta) \right)}_{m=0} |(f_m,t)|_{\BZ_m},\\
     |(f_m,t)|_{\BZ_m} &\leq 9 \big( \range(f_m,t) + E_m(f_m)  \big).
  \end{align*}
\end{Prop}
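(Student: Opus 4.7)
My plan is to prove the two bounds separately, both by exhibiting explicit words that witness them.

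For the first inequality, the strategy is a level-by-level decomposition. By \cref{Lmm:deffm}, the element $\delta=(\mbff,t)$ is entirely determined by $t$, $f_0$, and the commutator parts $(f'_m)_{m\leq \landing(\range(\delta))}$. I would therefore write
\begin{equation*}
\delta = \delta_0 \cdot \prod_{m=1}^{\landing(\range(\delta))} \delta_m,
\end{equation*}
where $\delta_0 \in \BZ$ is a word that realises $f_0$ --- which, by \cref{Lmm:deffm}, \emph{simultaneously} writes all the abelian projections $\theta^A_m(f_0)$ and $\theta^B_m(f_0(\cdot - k_m))$ at every level $m$ at no extra cost --- and each $\delta_m$ is a product of conjugates of the commutator pattern illustrated in \cref{Ex:Commutateur}, which writes $f'_m$ at level $m$ only. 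The identification $\Gamma_m / \langle\langle [A_m, B_m]\rangle\rangle \simeq A_m \times B_m$ from Hypothesis (H) is what guarantees that these commutator words leave every other level untouched, so the costs add. A length bookkeeping then shows that $\delta_m$ can be represented by a word of length comparable to $|(f_m,t)|_{\BZ_m}$, the constant $500$ absorbing the conjugations in the commutator recipe.

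For the second inequality, the argument is a "sweep" in the single lamplighter $\BZ_m=\Gamma_m\wr\bZ$, adapted to the long-offset generating set. The cursor must at least traverse $\range(f_m,t)$ and end at $t$, contributing $O(\range(f_m,t))$. To write a non-trivial lamp value $f_m(x)\in \Gamma_m$ one must shuttle between positions $x$ and $x+k_m$ (the $A$-generators write at the cursor and the $B$-generators at distance $k_m$). The crucial observation is that all writes whose positions lie in the same half-$k_m$-interval $I^m_j$ can be batched into the \emph{same} round trip of length $k_m$, so that the cost over $I^m_j$ is of order $k_m\cdot \max_{x\in I^m_j}(|f_m(x)|_{\Gamma_m}-1)_+$; the $-1$ reflects the fact that a single "layer" of writing is already paid for by the traversal of the interval itself. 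Summing over $j$ produces $E_m(f_m)$, and adding the traversal cost yields the claimed bound, with the constant $9$ absorbing the overhead of switching intervals and returning the cursor to $t$.

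I expect the first inequality to be the harder one. The delicate point is the independence of the level-$m$ writing from other levels: without Hypothesis (H), writing a commutator at level $m$ could spill unwanted abelian contributions into other levels and produce an error not controlled by $\sum_m |(f_m,t)|_{\BZ_m}$. By contrast, the second inequality is a fairly routine combinatorial argument once the batching observation is identified, with the only subtlety being to extract precisely the $-1$ in $(|f_m(x)|_{\Gamma_m}-1)_+$ from the traversal.
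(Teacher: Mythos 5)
The paper itself gives no proof of this statement: it is explicitly imported from Brieussel and Zheng, quoted as a summary of their Lemma 2.13 and Proposition 2.14, so there is no in-paper argument to compare your proposal against. That said, your sketch does outline the right structure — batching writes over the half-$k_m$ intervals $I^m_j$ for the single-level bound, and separating the abelian contribution (written for free alongside $f_0$) from the level-$m$ commutator contributions for the multi-level bound.

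The step you leave too implicit, though, is the one that makes the first inequality actually go through. To write $f^\prime_m(x)$ at level $m$ without disturbing the other levels, one should \emph{not} decompose it into conjugated commutators: the conjugating words are not controlled by $|(f_m,t)|_{\BZ_m}$, so this would not give the claimed bound. Instead one takes a word $a_1b_1\cdots a_Lb_L$ in $A_m\cup B_m$ realising $f^\prime_m(x)$ in $\Gamma_m$, with $L$ comparable to $|f_m(x)|_{\Gamma_m}$, and executes it with the cursor shuttling between $x-k_m$ and $x$. The reason this is harmless at a level $m^\prime\neq m$ is precisely that $f^\prime_m(x)\in\Gsp$ has trivial image in $A_m\times B_m$, hence $a_1\cdots a_L=e$ in $A$ and $b_1\cdots b_L=e$ in $B$, so the entries deposited at every other level cancel. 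That cancellation mechanism — not Hypothesis \textbf{(H)} in the abstract — is what lets the level-$m$ cost be bounded by a constant multiple of $|(f_m,t)|_{\BZ_m}$; as written, your phrase ``a length bookkeeping then shows'' is concealing exactly this.
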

\section{Folner tiling shifts}\label{Sec:FOTS}
We start by recalling some material of \cite{DKLMT} about Følner tiling
shifts and then construct such a tiling for diagonal products.

\subsection{Følner tiling shifts}\label{Subsec:FOTS}
The tools we are going to use to build orbit equivalence are \emph{Følner tiling
  shifts}\footnote{Delabie et al. \cite{DKLMT} use the term “Folner tiling
  sequence”. We chose to call $(\Sigma_n)_n$ a tiling \emph{shift} in 
    order to avoid confusion with usual Følner sequences.}. These 
sequences lead to Følner sequences defined recursively: the term of rank $(n+1)$ is
composed of a finite number of translates of the $n$-th term of the sequence. 

\begin{Def}\label{Def:FOTS}\index{Følner tiling shift}
  Let $G$ be an amenable group and $(\Sigma_n)_{n\in \bN}$ be a sequence of finite subsets
  of $G$. Define by induction the sequence $(T_n)_{n\in \bN}$ by $T_0:=\Sigma_0$ and
  $T_{n+1}:=T_n\Sigma_{n+1}$. We say that $(\Sigma_n)_{n\in \bN}$ is a (left) \deffont{Følner
    tiling shift} if
  \begin{itemize}
  \item $(T_n)_{n\in \bN}$ is a left Følner sequence, \textit{viz.} $\lim_{n \rightarrow \infty} {|gT_n \backslash T_n|}/{|T_n|} = 0$ for all $g\in G$;
  \item $T_{n+1}= \sqcup_{\sigma \in \Sigma_{n+1}} \sigma T_n$.
  \end{itemize}
  We call $\Sigma_n$ the set of \deffont{shifts}\index{Shifts} and $(T_n)_{n\in
    \bN}$ the \deffont{tiles}\index{Tiles}.
\end{Def}

We can also consider \emph{right} Følner tiling shifts, that is to say sequences
$(\Sigma_n)_n$ such that $T_{n+1}:=\Sigma_{n+1}T_n$ defines a right Følner
sequence.

\begin{Def}
  Let $S$ be a generating part of $G$. We say that $(\Sigma_n)_{n\in \bN}$ is a
  $(R_n,\varepsilon_n)$-Folner tiling shift if for all $n$ we have
  \begin{equation*}
    \diam{T_n} \leq R_n, \qquad \left\vert sT_n\backslash T_n \right\vert
    \leq \varepsilon_n |T_n| \quad (\forall s\in S).
  \end{equation*}
\end{Def}

Delabie et al. obtained in \cite{DKLMT} the two following examples. 
\begin{Ex}
 If $G=\bZ$ the sequence defined by $\Sigma_{n+1}:= \left\{ 0,2^n
 \right\}$ is a $(2^n,2^{1-n})$-Følner tiling shift and the sequence $(T_n)$
 thus defined verifies $T_n=[0,2^{n}-1]$.
\end{Ex}

\begin{Ex}
  If $G=(\bZ/2\bZ) \wr \bZ$ then the sequence $(\Sigma_n)_{n\in \bN}$ defined by 
  \begin{equation*}
    \begin{cases}
      \Sigma_{0}&:=\left\{ (f,0)\in G \mid \supp(f)\subseteq \{0,1\} \right\},\\
      \Sigma_{n+1}&:= \left\{ (f,0)\in G \mid \supp(f)\subseteq [2^n,2^{n+1}-1] \right\}\\
      &\phantom{:=}\cup \big\{ (f,2^n)\in G \mid \supp(f)\subseteq [0,2^{n}-1] \big\},
    \end{cases}
  \end{equation*}
  is a right $(3\cdot 2^{n},2^{-n})$-Følner tiling shift. Moreover the
  tiling $(T_n)_{n\in \bN}$ thus defined verifies $T_n=\left\{(f,m)\in G \mid
  \supp(f)\subseteq [0,2^{n}-1], \ m \in [0,2^{n}-1] \right\}$. 
\end{Ex}

In \cite{DKLMT} the authors used Følner tiling shifts to \emph{build} an explicit
orbit equivalence coupling between two amenable groups and \emph{quantify} its
integrability. Indeed if $G$ admits a
Følner tiling shift $(\Sigma_n)_{n\in \bN}$ then we can define $X:=\prod_{n\in \bN}
\Sigma_n$ and endow it with an action of $G$. Up to measure zero, two elements of
$X$ will be in the same orbit under that action if and only if they differ by
a finite number of indices. The equivalence relation thus induced is called the
\deffont{cofinite equivalence relation}. Now if $G^\prime$ admits a Følner
tiling shift $(\Sigma^{\prime}_n)_{n\in \bN}$ verifying
$|\Sigma_n|=|\Sigma^{\prime}_n|$ for all integer $n$, then there exists a
natural bijection between $X$ and $X^{\prime}:=\prod_{n\in
  \bN}\Sigma^{\prime}_n$ which preserves the cofinite equivalence relation. That
is to say $G$ and $H$ are orbit equivalent.
Furthermore they showed that if we know
the diameter and the ratio of elements in the boundary of each tile, then we can deduce
the integrability of the coupling. This is what the following proposition sums up.

\begin{Th}[{\cite[Prop. 6.6]{DKLMT}}] \label{Prop:ConditionpourOE} Let $G$ and
  $G^\prime$ be two discrete amenable groups and let $(\Sigma_n)_n$ be an
  $(\varepsilon_n,R_n)$-Følner tiling shift for $G$ and
  $(\Sigma^\prime_n)_n$ be an $({\varepsilon^\prime}_n,{R^\prime}_n)$-Følner 
  tiling shift for~${G^\prime}$.

  If $|\Sigma_n|=|{\Sigma^\prime}_n|$, then the groups are orbit equivalent over
  $X=\prod_{n\in \bN} \Sigma_n$. Moreover if $\varphi:\bR_+ \rightarrow \bR_+$
  is a non-decreasing map such that the sequence $\big( \varphi(2
    {R^\prime}_n)\left( \varepsilon_{n-1} -\varepsilon_n \right) \big)_{n\in
    \bN}$ is summable, then the coupling from $G$ to $G^\prime$ is
  $(\varphi,L^0)$-integrable.
\end{Th}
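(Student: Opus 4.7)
I would put on $X := \prod_{n \in \bN} \Sigma_n$ the product of the normalised counting measures on the factors, and identify each finite prefix $(\sigma_0,\dots,\sigma_n)$ with the element $\sigma_n\cdots\sigma_0 \in T_n$ via the disjoint decomposition $T_{n+1} = \bigsqcup_{\sigma \in \Sigma_{n+1}} \sigma T_n$. This allows one to define a $G$-action on $X$ as follows: given $g \in G$ and $x=(\sigma_0,\sigma_1,\dots)$, let $N(g,x)$ be the least $n$ with $g\sigma_n\cdots\sigma_0 \in T_n$; this is almost surely finite by the Følner property, so one can write $g\sigma_n\cdots\sigma_0 = \sigma'_n\cdots\sigma'_0$ uniquely with $\sigma'_i \in \Sigma_i$, then set $g\cdot x := (\sigma'_0,\dots,\sigma'_n,\sigma_{n+1},\sigma_{n+2},\dots)$. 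The main verifications are that this defines a free, measure-preserving Borel action whose orbit relation coincides $\mu$-almost everywhere with the cofinite equivalence relation on $X$. Running the analogous construction on $X' := \prod_n \Sigma'_n$ and choosing arbitrary bijections $\phi_n : \Sigma_n \to \Sigma'_n$ (available thanks to $|\Sigma_n|=|\Sigma'_n|$) produce a measure-preserving Borel isomorphism $\Phi : X \to X'$ that sends cofinite classes onto cofinite classes, and therefore induces the desired orbit equivalence coupling.

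\textbf{Pointwise distance bound.} For the quantitative part fix a generator $s \in S_G$. When $N(s,x)=n$, the sequences $x$ and $s\cdot x$ agree past index $n$, so under $\Phi$ their first $n+1$ coordinates correspond to two elements $a,b \in T'_n$. If $h \in G'$ is the (a.s. uniquely defined) element of the $G'$-action realising $h\cdot\Phi(x)=\Phi(s\cdot x)$, then $h=ba^{-1}$, whence
\begin{equation*}
d_{S_{G'}}\bigl(\Phi(x),\Phi(s\cdot x)\bigr)\;=\;|h|_{S_{G'}}\;\leq\;|b|+|a|\;\leq\;2\,\diam{T'_n}\;\leq\;2R'_n,
\end{equation*}
the penultimate inequality using that $T'_n$ can be arranged to contain the identity of~$G'$.

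\textbf{Tail estimate and Abel summation.} The key probabilistic input is
\begin{equation*}
p_n(s) \;:=\; \mu\bigl(\{x\in X : N(s,x) > n\}\bigr)\;\leq\;\varepsilon_n,
\end{equation*}
which follows from $\{N(s,\cdot)>n\}$ corresponding, on the first $n+1$ coordinates, to the set $T_n\setminus s^{-1}T_n$ of relative size $|sT_n\setminus T_n|/|T_n|\leq\varepsilon_n$. Combining this with the distance bound and partitioning $X$ according to the level $N(s,x)$,
\begin{equation*}
\int_X \varphi\bigl(d_{S_{G'}}(\Phi(x),\Phi(s\cdot x))\bigr)\,d\mu(x)\;\leq\;\sum_{n\geq 0}\varphi(2R'_n)\bigl(p_{n-1}(s)-p_n(s)\bigr),
\end{equation*}
with the convention $p_{-1}(s)=1$. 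An Abel summation rewrites this as $\varphi(2R'_0)+\sum_n p_n(s)\bigl(\varphi(2R'_{n+1})-\varphi(2R'_n)\bigr)$, which is in turn bounded via $p_n(s)\leq\varepsilon_n$ and a reverse summation by parts by (a constant plus) $\sum_n \varphi(2R'_n)(\varepsilon_{n-1}-\varepsilon_n)$, finite by hypothesis. Passing from generators to an arbitrary $g\in G$ by the triangle inequality (absorbing $|g|_{S_G}$ into the constant $c_g$ of \cref{Def:QuantMev}, using the monotonicity of $\varphi$) yields $(\varphi,L^0)$-integrability. The main difficulty is not this final computation but the setup: one must verify carefully that the $G$- and $G'$-actions are well defined on conull sets, free, measurable and measure-preserving, that their orbit relations coincide almost everywhere with the cofinite one, and that $\Phi$ intertwines them.
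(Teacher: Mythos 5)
The statement is a citation (DKLMT, Prop.~6.6); the paper itself offers no proof, so I judge your attempt on its own merits. Your overall architecture is the right one — cofinite equivalence relation on $X=\prod_n\Sigma_n$, a measure-preserving bijection $\Phi$ from the cardinality match, a level $N(s,x)$ controlled by the Følner defect, a diameter bound at that level, and an Abel summation — and the tail estimate $p_n(s)\leq\varepsilon_n$ together with the two summations by parts are correctly set up.

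However, the $G$-action you write down is not a group action. With your identification $(\sigma_0,\dots,\sigma_n)\leftrightarrow\sigma_n\cdots\sigma_0\in T_n$ and the tiling $T_{n+1}=\bigsqcup_{\sigma\in\Sigma_{n+1}}\sigma T_n$, left multiplication by $g$ is incompatible with ``freeze the tail'': if $g\,\sigma_n\cdots\sigma_0=\sigma'_n\cdots\sigma'_0$, then at level $n+1$ one gets
\begin{equation*}
 g\,(\sigma_{n+1}\sigma_n\cdots\sigma_0)=(g\sigma_{n+1})\sigma_n\cdots\sigma_0\ \neq\ \sigma_{n+1}(g\,\sigma_n\cdots\sigma_0)=\sigma_{n+1}\sigma'_n\cdots\sigma'_0
\end{equation*}
for a non-abelian $G$, so your formula $g\cdot x=(\sigma'_0,\dots,\sigma'_n,\sigma_{n+1},\dots)$ gives level-dependent, inconsistent answers. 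One checks similarly that $(g_1g_2)\cdot x\neq g_1\cdot(g_2\cdot x)$ in general, so this is not a transformation group, not just a missing verification. The rescue is to act on the side that commutes with the tiling: with the decomposition $T_{n+1}=\bigsqcup_\sigma\sigma T_n$ the relation induced on $X$ by agreeing beyond coordinate $n$ is right translation, since $(\sigma_m\cdots\sigma_{n+1}\,a)^{-1}(\sigma_m\cdots\sigma_{n+1}\,b)=a^{-1}b$ is independent of $m\geq n$ (or, equivalently, reverse the order of the factors and tile on the right, then use left translation). This propagates into your ``pointwise distance bound'': the element realizing the equivalence is $h=a^{-1}b$, not $ba^{-1}$, though the estimate $|h|_{S_{G'}}\leq|a|+|b|\leq 2R'_n$ survives either way (using $\neutre\in T'_n$). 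Once the action is put on the correct side, your tail estimate and double Abel summation go through, modulo the (standard, but worth flagging) observations that $R'_n$ should be taken non-decreasing and that the boundary term $\varphi(2R'_N)\varepsilon_{N-1}$ tends to $0$ because $\varepsilon_n\to 0$ and $\sum_n\varphi(2R'_n)(\varepsilon_{n-1}-\varepsilon_n)<\infty$.
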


Using this tiling technique and the above theorem, Delabie et al.
\cite{DKLMT} obtained the first point of \cref{Ex:CouplagesQt} and the two
following quantifications.

\begin{Ex}\label{Ex:CouplageZmZn}
  For all $n$ and $m$
  there exists an orbit equivalence coupling from $\bZ^m$ to $\bZ^n$ which is
  $(\varphi_{\varepsilon},\psi_{\epsilon})$-integrable for every $\varepsilon
  >0$ where  
   \begin{equation*}
     \varphi_{\varepsilon}(x)=\frac{x^{n/m}}{\log(x)^{1+\varepsilon}} \quad
     \psi_{\varepsilon}(x)=\frac{x^{m/n}}{\log(x)^{1+\varepsilon}}.
  \end{equation*}
\end{Ex}

Remark that in particular
for all $p<n/m$ and $q<m/n$ there exists a $(L^p,L^{q})$-orbit equivalence
coupling from $\bZ^m$ to $\bZ^n$.

\begin{Ex}\label{Ex:LamplighterZ}
 Let $m\geq 2$. There exists an orbit equivalence coupling from $\bZ$ to
 $\bZ/m\bZ\wr \bZ$ that is $(\exp, \varphi_{\varepsilon})$-integrable for all
 $\varepsilon>0$ where
 \begin{equation*}
   \varphi_{\varepsilon}(x)=\frac{\log(x)}{\log(\log(x))^{1+\varepsilon}}.
 \end{equation*}
\end{Ex}
Note that the above example corresponds to the case when $\rho(x)=x$ in our
\cref{Th:CouplingwithZ}.

\subsection{Følner tiling shifts of diagonal products}\label{subsec:FOTSDelta}
Let $(k_m)_m$ and $(l_m)_m$ be two sequences verifying the conditions of
\textbf{(H)} and consider $\BZ$ the associated diagonal product (see
\cref{Sec:DefdeBZ}). We define below a Følner tiling shift for $\BZ$. Our goal
is to obtain a tiling verifying $T_n=F_{\kappa^n}$. After defining the shifts sets
$\Sigma_n$ we prove that the sequence $(\Sigma_n)_{n\in \bN}$ is actually a Følner
tiling shift. Finally we make this last statement precise by
computing $(R_n)_{n\in \bN}$ and $(\varepsilon_n)_{n\in \bN}$ such that 
$(\Sigma_n)_{n\in \bN}$ is a
$(R_n,\varepsilon_n)$-Følner tiling shift (see \cref{Def:FOTS}). 

\subsubsection{Definition of the shifts}\label{subsec:DefDesShifts}
For any $n\in \bN$, let $\Landing(n)=\landing(\kappa^n-1)$, that is to say $\Landing(n)$ is the
integer such that $k_{\Landing(n)}\leq {\kappa}^{n}-1 <k_{\Landing(n)+1}$. For
example if $k_n:=\kappa^n$ for all $n\in \bN$, then $\Landing(n)=n-1$.

Before defining our sequence $(\Sigma_n)_{n\in \bN}$, let us show some
practical results on $\Landing$. First remark that
since $(k_n)_{n\in \bN}$ is a subsequence of $(\kappa^n)_{n\in \bN}$, it
verifies $k_n\geq \kappa^n$ for all $n\in \bN$. Thus $\Landing(n)\leq n$ and
\begin{equation*}
  k_{\Landing (n)} < \kappa^n \leq k_{\Landing(n)+1}.
\end{equation*}

\begin{Claim} \label{Claim:Landingnplusun}
  Let $n\geq 0$, then either $\Landing(n+1)=\Landing (n)$ or
  $\Landing(n+1)=\Landing (n)+1$. Moreover in this second case
  $k_{\Landing(n+1)}=\kappa^{n}$. 
\end{Claim}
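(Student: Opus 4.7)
The plan is a short arithmetic argument using only the definition of $\Landing$ together with the structural properties of $(k_m)_m$ coming from hypothesis \textbf{(H)}: namely, $(k_m)_{m\geq 1}$ is a subsequence of $(\kappa^a)_{a\geq 0}$, and $k_{m+1}\geq 2k_m\geq \kappa k_m$. The dichotomy $\Landing(n+1)\in\{\Landing(n),\Landing(n)+1\}$ will come from the two bounds $\Landing(n)\leq \Landing(n+1)\leq \Landing(n)+1$, and the identification $k_{\Landing(n+1)}=\kappa^n$ will fall out of the proof of the upper bound.

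First I would note the lower bound: since $\kappa^{n+1}-1\geq \kappa^n-1$ and $\landing$ is non-decreasing, we immediately get $\Landing(n+1)\geq \Landing(n)$. The real content is the upper bound, which I would prove by contradiction. Assume $\Landing(n+1)\geq \Landing(n)+2$. The definition of $\Landing$ gives $k_{\Landing(n)+1}>\kappa^n-1$. Since $k_{\Landing(n)+1}$ is itself a power of $\kappa$ (hypothesis \textbf{(H)}) and the largest power of $\kappa$ not exceeding $\kappa^n-1$ is $\kappa^{n-1}$ (valid for $\kappa\geq 2$, $n\geq 1$; the case $n=0$ is trivial since $\Landing(0)=0$), we must have
\begin{equation*}
k_{\Landing(n)+1}\geq \kappa^n.
\end{equation*}
Combining this with the doubling condition $k_{m+1}\geq \kappa k_m$ applied once more yields
\begin{equation*}
k_{\Landing(n+1)}\geq k_{\Landing(n)+2}\geq \kappa\, k_{\Landing(n)+1}\geq \kappa^{n+1},
\end{equation*}
which contradicts $k_{\Landing(n+1)}\leq \kappa^{n+1}-1$. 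Hence $\Landing(n+1)\leq \Landing(n)+1$.

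For the second assertion, I would simply chase the inequalities already obtained. If $\Landing(n+1)=\Landing(n)+1$ then $k_{\Landing(n+1)}=k_{\Landing(n)+1}\geq \kappa^n$ from above, while $k_{\Landing(n+1)}\leq \kappa^{n+1}-1<\kappa^{n+1}$ by definition of $\Landing(n+1)$. Because $k_{\Landing(n+1)}$ is a power of $\kappa$ lying in $[\kappa^n,\kappa^{n+1})$, it has to equal $\kappa^n$.

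The argument is essentially routine, so no serious obstacle is expected; the only subtle point is the step $k_{\Landing(n)+1}\geq \kappa^n$, which crucially exploits that $(k_m)$ is a \emph{subsequence} of $(\kappa^a)_a$ and not merely a sequence satisfying $k_{m+1}\geq 2k_m$. A small amount of care is also needed at $n=0$ where $k_0=0$, but this case is handled directly since $\Landing(0)=0$ and $\Landing(1)\in\{0,1\}$ follows from $k_1\geq \kappa\geq 2>\kappa-1$ only when $k_1=\kappa$, matching the statement.
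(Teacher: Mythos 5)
Your argument is correct and follows essentially the same route as the paper: you establish $k_{\Landing(n)+1}\geq \kappa^n$ from the definition of $\Landing(n)$, apply $k_{m+1}\geq \kappa k_m$ once to rule out $\Landing(n+1)\geq \Landing(n)+2$, and read off $k_{\Landing(n+1)}=\kappa^n$ from the pinching $\kappa^n\leq k_{\Landing(n+1)}<\kappa^{n+1}$ for a power of $\kappa$. One small slip in your preamble: the chain $k_{m+1}\geq 2k_m\geq \kappa k_m$ is false whenever $\kappa>2$; the inequality $k_{m+1}\geq \kappa k_m$ that you actually invoke in the body holds because $(k_m)_{m\geq 1}$ is a strictly increasing sequence of powers of $\kappa$, not as a consequence of the doubling hypothesis $k_{m+1}\geq 2k_m$.
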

\begin{proof} Recall that by definition $\Landing (m)=\max
  \left\{i\in \bN\, | \, k_i \leq \kappa^m-1 \right\}$ for all $m\in \bN$.

  Let
  $n\in \bN$, then $\Landing(n+1)\geq \Landing (n)$. Moreover if $k_{\Landing
    (n)+1}\geq \kappa^{n+1}$ then $\Landing(n+1)< \Landing(n)+1$. That is to say
  $\Landing(n+1)\leq \Landing(n)$ and thus $\Landing(n+1)=\Landing (n)$.

  On the contrary, if $k_{\Landing(n)+1}<\kappa^{n+1}$ then $\Landing(n+1)\geq
  \Landing (n)+1$. But, by definition of $\Landing (n)$ it verifies $k_{\Landing
    (n)+1}\geq \kappa^{n}$ and by 
  construction of $(k_m)_{m\in \bN}$ we also have $k_{\Landing(n)+2} \geq \kappa
  k_{\Landing (n)+1}$ thus $k_{\Landing(n)+2} \geq \kappa^{n+1}$. Hence
  $\Landing(n+1)<\Landing(n)+2 $ and the first assertion.

  Finally if $\Landing(n+1)=\Landing(n)+1$ then by definition of $\Landing$
  \begin{equation*}
    k_{\Landing(n)} < \kappa^n \leq k_{\Landing(n)+1}=k_{\Landing(n+1)} \leq \kappa^{n+1}-1.
  \end{equation*}
  But $(k_m)_{m\in \bN}$ is a subsequence of $\kappa^m$ thus the above
  inequality implies $k_{\Landing(n+1)}=\kappa^n$. 
\end{proof}

Now, let us define the shifts. First let $\Sigma_0:=F_0$, then if $n\geq 0$
we distinguish two cases depending on whether $\Landing(n+1)=\Landing (n)$ or
$\Landing(n+1)=\Landing (n)+1$ and in both cases 
we split the set of shifts $\Sigma_{n+1}$ in $\kappa$ parts.

If $\Landing(n+1)=\Landing (n)$, let for all $j \in \{0, \ldots,\kappa -1\}$
\begin{equation*}
  \Sigma^j_{n+1}
  :=\left\{  \left( \mbfg, j\kappa^n \right)\in \BZ \ \left\vert \
      {\begin{aligned}
          \supp \left(g_0\right) &\subseteq 
          \left[ 0, j \kappa^n-1 \right] \cup \left[ (j+1)\kappa^n, \kappa^{n+1}-1 \right],\\
         \forall m \in [1,\Landing (n)] & \\
          \supp \left(g^\prime_m\right)& \subseteq 
          \big[k_m, j \kappa^n + k_m -1\big] \cup \big[(j+1)\kappa^n, \kappa^{n+1}-1 \big],\\
          \forall m \notin [0,\Landing (n)]& \\
          \supp \left(g^\prime_{m}\right)
          & =  \emptyset.
        \end{aligned}}
    \right. \right\}.
\end{equation*}
Now if $\Landing(n+1)=\Landing (n)+1$, we add the condition that $g^\prime_{\Landing(n)+1}$ has support
contained in $[k_{\Landing(n+1)},\kappa^{n+1}-1]$, namely for all $j \in \{0, \ldots,\kappa -1\}$
\begin{equation*}
  \Sigma^j_{n+1}
  :=\left\{  \left( \mbfg, j\kappa^n \right)\in \BZ \ \left\vert \
    {\begin{aligned}
        \supp \left(g_0\right)& \subseteq
        \left[ 0, j \kappa^n-1 \right]
        \cup \left[ (j+1)\kappa^n, \kappa^{n+1}-1 \right]\\
        \forall m \in [1,\Landing (n)]&\\
        \supp \left(g^\prime_m\right)&\subseteq
        \big[k_m, j \kappa^n + k_m -1\big]
        \cup \big[(j+1)\kappa^n, \kappa^{n+1}-1 \big],\\
        \supp \left(g^\prime_{\Landing (n)+1}\right) 
        &\subseteq\left[k_{\Landing (n)+1}, \kappa^{n+1}-1 \right], \\
        \forall m \notin [0,\Landing (n+1)] & \\
       \ \supp \left(g^\prime_{m}\right)
        &= \emptyset.
      \end{aligned}}
                        \right. \right\}.
\end{equation*}
Finally, in both cases we define $\Sigma_{n+1}:= \cup^{\kappa -1}_{j=0} \Sigma^j_{n+1}$.

Let $(\mbfg,t)$ be an element of some $\Sigma^j_{n+1}$. We represent in
\cref{fig:Sigmaj} the supports and the sets where the maps $g_0, {g^\prime}_1,
\ldots, {g^\prime}_{\Landing (n+1)}$ take their 
values. The light-blue rectangle with dotted outline is in
$\Sigma^j_{n+1}$ if and only if $\Landing(n+1)=\Landing(n)+1$.
\begin{figure}[htbp]
  \centering
  \includegraphics[width=0.85\textwidth]{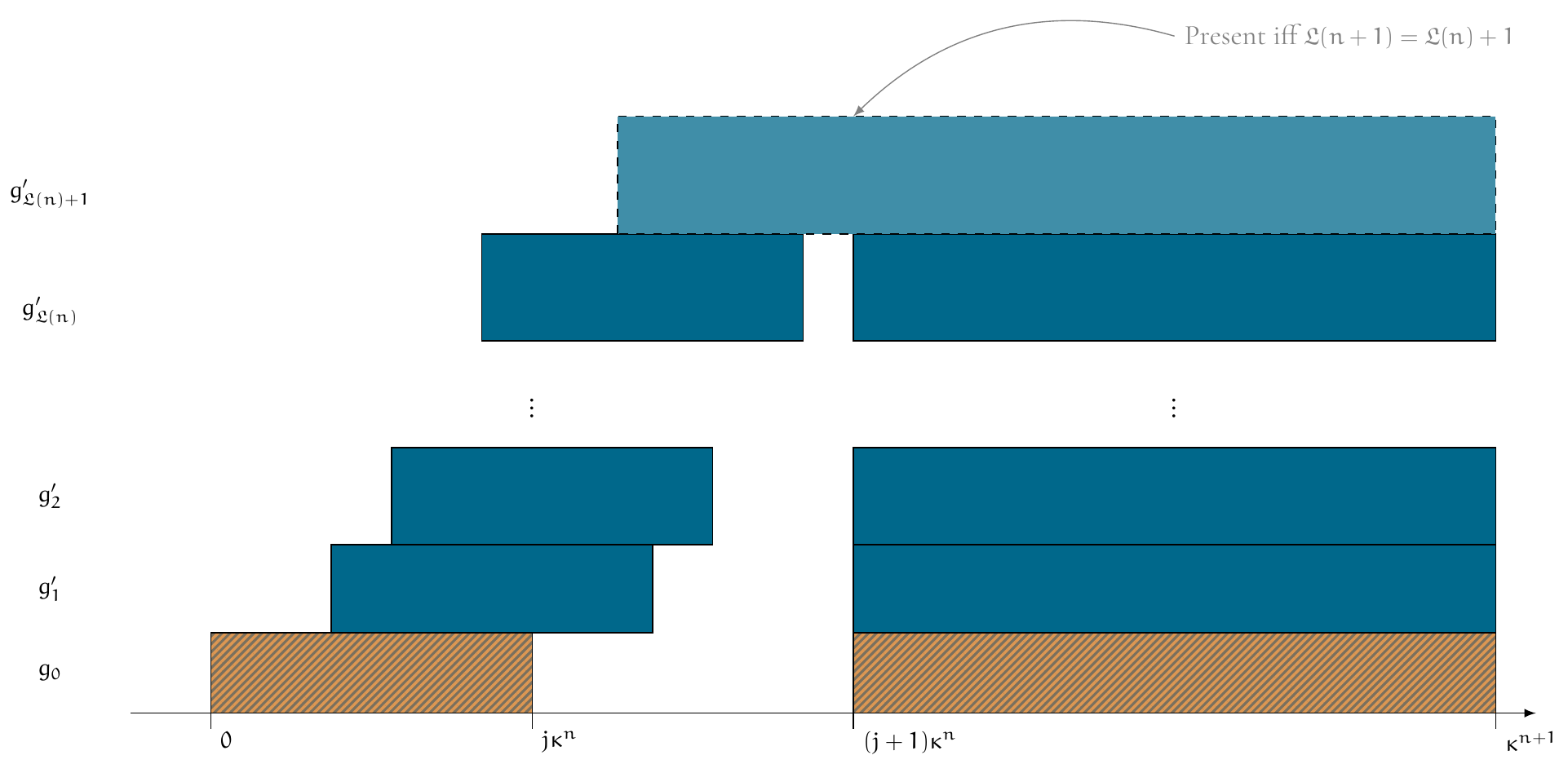}
  \caption{Support and values taken by $(\mbfg,t) \in \Sigma^j_{n}$}
  \label{fig:Sigmaj}
\end{figure}

Now that we have the shifts sequence, let us turn to the definition of the
tiles.
\subsubsection{Tiling}
Recall that $(F_n)_{n\in \bN}$ denotes the Følner sequence of $\BZ$ defined in
\cref{Prop:FolnerSequence}. The aim of this section is to show the theorem below. 
\begin{Th}\label{Th:FOTS}
  The sequence $(\Sigma_n)_{n\in\bN}$ defined in \cref{subsec:DefDesShifts} is a
  Følner tiling shift of $\BZ$. 
\end{Th}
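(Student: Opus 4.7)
The plan is to establish by induction on $n$ that $T_n$ coincides with the Følner set $F_{\kappa^n}$ of \cref{Prop:FolnerSequence}. Once this identification is in place, the Følner property of $(T_n)_n$ follows immediately from \cref{Prop:FolnerSequence} (any subsequence of a Følner sequence is Følner), and the tiling identity $T_{n+1} = \sqcup_{\sigma \in \Sigma_{n+1}} \sigma T_n$ falls out of the induction step. The base case is handled by the definition $\Sigma_0 := F_0$.

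For the induction step, I first verify that for any $\sigma = (\mbfg, j\kappa^n) \in \Sigma^j_{n+1}$ and any $\tau = (\mbff, t) \in F_{\kappa^n}$, the product $\sigma\tau = \bigl(\mbfg \cdot (j\kappa^n \cdot \mbff),\, j\kappa^n + t\bigr)$ lies in $F_{\kappa^{n+1}}$. The cursor is manifestly in $[0, \kappa^{n+1}-1]$. By the very design of $\Sigma^j_{n+1}$, the support of $g_0$ avoids the middle block $[j\kappa^n, (j+1)\kappa^n - 1]$, which is precisely where the shifted configuration $j\kappa^n \cdot f_0$ lives; the two supports combine cleanly inside $[0, \kappa^{n+1}-1]$. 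For the derived components, \cref{Claim:DeriveeProduit} gives
\begin{equation*}
(\sigma\tau)'_m(x) = g'_m(x)\,\theta_m\bigl(g_m(x)\bigr)\,(j\kappa^n \cdot f'_m)(x)\,\theta_m\bigl(g_m(x)\bigr)^{-1}.
\end{equation*}
The key observation is that at every $x$ where $(j\kappa^n \cdot f'_m)(x) \neq \neutre$, both $g_0(x)$ and $g_0(x - k_m)$ vanish by the support conditions built into $\sigma$, and $g'_m(x)=\neutre$ on the same interval; \cref{Lmm:deffm} then forces $g_m(x) = \neutre$, so the conjugation in the formula trivializes. A short case distinction based on \cref{Claim:Landingnplusun}, separating $\Landing(n+1) = \Landing(n)$ from $\Landing(n+1) = \Landing(n)+1$ (where the extra component $g'_{\Landing(n)+1}$ of $\sigma$ plays its role), confirms that $\supp((\sigma\tau)'_m) \subseteq [k_m, \kappa^{n+1}-1]$ for $m \leq \Landing(n+1)$ and is trivial for larger $m$, so \cref{Lmm:range} places $\sigma\tau$ in $F_{\kappa^{n+1}}$.

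For the reverse inclusion and the uniqueness of the decomposition, the plan is to reverse the construction. Given any $\delta = (\mbfh, s) \in F_{\kappa^{n+1}}$, define $j := \lfloor s/\kappa^n \rfloor \in \{0,\dots,\kappa-1\}$ and $t := s - j\kappa^n \in [0, \kappa^n - 1]$. Then split each of $h_0$ and $h'_m$ into its restriction to the outer region $[0, j\kappa^n - 1]\cup[(j+1)\kappa^n, \kappa^{n+1}-1]$, which yields $g_0$ and $g'_m$, and its restriction to the middle block $[j\kappa^n, (j+1)\kappa^n-1]$ translated back by $-j\kappa^n$, which yields $f_0$ and $f'_m$. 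The same trivial-conjugation argument as above shows that this splitting is exactly consistent with \cref{Claim:DeriveeProduit}, so the resulting $\sigma \in \Sigma^j_{n+1}$ and $\tau \in F_{\kappa^n}$ satisfy $\sigma\tau = \delta$, and by construction they are the unique such pair.

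The main obstacle is the careful handling of the conjugation in \cref{Claim:DeriveeProduit}: a priori the factor $\theta_m(g_m)$ could entangle the derived components of $\sigma$ and $\tau$, and everything hinges on the fact that the support conditions defining $\Sigma^j_{n+1}$ are tailored precisely so that $g_m$ vanishes on the middle block. Once that observation is secured, the rest is routine interval bookkeeping via \cref{Lmm:range} and \cref{Claim:Landingnplusun}.
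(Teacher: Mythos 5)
Your overall strategy — prove $T_n = F_{\kappa^n}$ by induction and read off the Følner and tiling properties from that identification — is exactly the paper's strategy (it is isolated there as \cref{Lmm:DefTnBZ} followed by a separate disjointness argument). The forward inclusion, including the observation that $\theta_m(g_m)$ trivializes on the support of the shifted $f'_m$ because the support constraints force $g_0(x)=g_0(x-k_m)=g'_m(x)=\neutre$ there, is correct and is essentially what the paper does.

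The gap is in the reverse direction. You split $h'_m$ along the \emph{same} interval boundary as $h_0$, i.e.\ outer region $[0,j\kappa^n-1]\cup[(j+1)\kappa^n,\kappa^{n+1}-1]$ versus middle block $[j\kappa^n,(j+1)\kappa^n-1]$. That is the right split for $h_0$, but not for the derived configurations. The constraints on $\Sigma^j_{n+1}$ and $F_{\kappa^n}$ require
\begin{equation*}
\supp(g'_m)\subseteq [k_m, j\kappa^n+k_m-1]\cup[(j+1)\kappa^n,\kappa^{n+1}-1],
\qquad
\supp\bigl(f'_m(\cdot-j\kappa^n)\bigr)\subseteq [j\kappa^n+k_m,(j+1)\kappa^n-1],
\end{equation*}
so the cut for $h'_m$ must be shifted by $k_m$. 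With your cut, anything $h'_m$ does on $[j\kappa^n,j\kappa^n+k_m-1]$ ends up in $f'_m$, which after translating back lands on $[0,k_m-1]$ — outside the allowed support for an element of $F_{\kappa^n}$, so the constructed $\tau$ need not lie in $T_n$. Worse, your trivial-conjugation argument also fails there: on $[j\kappa^n,j\kappa^n+k_m-1]$, $\cref{Lmm:deffm}$ gives $g_m(x)=\theta^B_m\bigl(g_0(x-k_m)\bigr)$, and $x-k_m$ can land in the left outer region where $g_0$ is genuinely nontrivial, so $\theta_m(g_m(x))$ does not vanish and the product formula in \cref{Claim:DeriveeProduit} does not reduce to a disjoint-support split. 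This overlap region $[j\kappa^n,j\kappa^n+k_m-1]$ (and symmetrically $[(j+1)\kappa^n,(j+1)\kappa^n+k_m-1]$) is precisely the subtlety the paper flags as ``supports overlap'' and handles with the five-interval case analysis; it cannot be waved away.

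Related, the final clause ``by construction they are the unique such pair'' is carrying the entire disjointness assertion $T_{n+1}=\bigsqcup_{\sigma\in\Sigma_{n+1}}\sigma T_n$, which the paper proves as a separate non-trivial step. Once the split of $h'_m$ is corrected (cut at $j\kappa^n+k_m$, not $j\kappa^n$), your mechanism does actually give uniqueness cleanly — the trivialization then holds on the whole support of $f'_m(\cdot-j\kappa^n)$ for \emph{any} $\sigma\in\Sigma^j_{n+1}$, so both pieces of the decomposition are forced by restriction — and this is arguably tidier than the paper's five-interval argument on the undifferentiated $g_m,f_m$. But as written the decomposition is wrong, the product $\sigma\tau$ need not equal $\delta$, and the uniqueness claim is unsupported.
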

Before showing that the sequence of tiles $(T_n)_{n\in \bN}$ thus induced
verifies indeed the conditions of \cref{Def:FOTS}, let us show the following lemma.

\begin{Lmm}\label{Lmm:DefTnBZ}
 The sequence $(T_n)_{n\in \bN}$ defined by $T_0:= F_0$ and $T_{n+1}:=
 \Sigma_{n+1} T_n$ for all $n>0$  verifies
 \begin{equation*} \big(\forall n \in \bN \big) \quad
   T_{n}=F_{\kappa^n}.
 \end{equation*}
\end{Lmm}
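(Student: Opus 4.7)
My plan is to proceed by induction on $n$, with the trivial base case $T_0=F_0=\Sigma_0$. For the inductive step I would fix $n$, assume $T_n=F_{\kappa^n}$, and prove the equality $\Sigma_{n+1}F_{\kappa^n}=F_{\kappa^{n+1}}$ by showing both inclusions. Throughout I rely on \cref{Lmm:range} to translate the range condition into support constraints on $g_0$ and the sequence $(g'_m)$, on \cref{Lmm:deffm} to recover an element from $g_0$ and the $g'_m$, on \cref{Claim:DeriveeProduit} to compute the derived map of a product, and on \cref{Claim:Landingnplusun} for the identity $k_{\Landing(n+1)}=\kappa^n$ in the case where $\Landing$ jumps.

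For the inclusion $\Sigma_{n+1}F_{\kappa^n}\subseteq F_{\kappa^{n+1}}$, I would fix $(\mbfg,j\kappa^n)\in\Sigma^j_{n+1}$ and $(\mbff,t)\in F_{\kappa^n}$ and compute their product in the wreath product, which equals $(\mbfg\cdot {}^{j\kappa^n}\mbff,\ j\kappa^n+t)$ with $({}^{j\kappa^n}\mbff)_m(x)=f_m(x-j\kappa^n)$. The cursor clearly lies in $[0,\kappa^{n+1}-1]$. For $g_0$, the support of $g_0$ sits in $[0,j\kappa^n-1]\cup[(j+1)\kappa^n,\kappa^{n+1}-1]$ while that of ${}^{j\kappa^n}f_0$ sits in $[j\kappa^n,(j+1)\kappa^n-1]$; these intervals are disjoint and their union lies in $[0,\kappa^{n+1}-1]$. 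For each $m\leq\Landing(n+1)$, \cref{Claim:DeriveeProduit} gives $(\mbfg\cdot{}^{j\kappa^n}\mbff)'_m=g'_m\cdot\theta_m(g_m)\cdot{}^{j\kappa^n}f'_m\cdot\theta_m(g_m)^{-1}$, whose support is contained in $\supp(g'_m)\cup\supp({}^{j\kappa^n}f'_m)$, and a direct interval check shows this lies in $[k_m,\kappa^{n+1}-1]$; the ``jump'' case $m=\Landing(n)+1$ is handled separately using $k_{\Landing(n+1)}=\kappa^n$. \cref{Lmm:range} then gives $\range\subseteq[0,\kappa^{n+1}-1]$, as required.

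For the reverse inclusion I would take $(\mbfh,s)\in F_{\kappa^{n+1}}$, write uniquely $s=j\kappa^n+t$ with $j\in\{0,\ldots,\kappa-1\}$ and $t\in[0,\kappa^n-1]$, and split each piece of $\mbfh$ by support. Concretely I would define $g_0$ to be $h_0$ on $[0,j\kappa^n-1]\cup[(j+1)\kappa^n,\kappa^{n+1}-1]$ and trivial elsewhere, set $f_0(x):=h_0(x+j\kappa^n)$ on $[0,\kappa^n-1]$, and analogously split each $h'_m$ along the intervals prescribed by $\Sigma^j_{n+1}$ and the $j\kappa^n$-shift of the support condition on $F_{\kappa^n}$; in the jump case I put $g'_{\Landing(n)+1}:=h'_{\Landing(n)+1}$ and $f'_{\Landing(n)+1}:=\neutre$. \cref{Lmm:deffm} turns this data into genuine elements $(\mbfg,j\kappa^n)\in\Sigma^j_{n+1}$ and $(\mbff,t)\in F_{\kappa^n}$ whose product recovers $(\mbfh,s)$. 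The main obstacle I expect is precisely here: verifying that this product really gives back $\mbfh$ on the derived maps, because \cref{Claim:DeriveeProduit} introduces a conjugation by $\theta_m(g_m)$ that a priori could alter the decomposition on $\supp({}^{j\kappa^n}f'_m)$. I would overcome it by noting that for $x\in[j\kappa^n+k_m,(j+1)\kappa^n-1]$ both $x$ and $x-k_m$ lie in the interval $[j\kappa^n,(j+1)\kappa^n-1]$ where $g_0$ has been set to $\neutre$; then the formula $\theta^A_m(g_m(x))=\theta^A_m(g_0(x))$ and $\theta^B_m(g_m(x))=\theta^B_m(g_0(x-k_m))$ from \cref{Lmm:deffm} forces $\theta_m(g_m(x))=\neutre$, so the conjugation is trivial and the product on the derived map really is $h'_m$ pointwise. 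All remaining verifications are routine interval arithmetic.
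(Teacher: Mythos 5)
Your proposal is correct and follows essentially the same double-inclusion induction as the paper: the first inclusion via the support constraints of \cref{Lmm:range} combined with \cref{Claim:DeriveeProduit}, and the second by decomposing $(\mbfh,s)$ into $(\mbfg,j\kappa^n)(\mbff,t)$ by restricting and shifting each of $h_0, h'_1,\ldots,h'_{\Landing(n+1)}$. Your extra observation that $\theta_m(g_m)$ vanishes on $[j\kappa^n+k_m,(j+1)\kappa^n-1]$ — so the conjugation in \cref{Claim:DeriveeProduit} is trivial on $\supp(f'_m(\cdot-j\kappa^n))$ — is exactly the detail behind the paper's terse ``using \cref{Lmm:deffm} we verify that $g_mf_m(\cdot-j\kappa^n)=h_m$'' and is a welcome clarification.
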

Let us discuss the idea of the proof. We proceed by induction and use a double
inclusion argument to prove the induction step. To
show that $\Sigma_{n+1}T_n$ is included in $F_{\kappa^{n+1}}$ we rely on
\cref{Lmm:range}, that is to say we verify that every element of
$\Sigma_{n+1}T_n$ has range included in $[0,\kappa^{n+1}-1]$. For the reversed
inclusion we consider an element $(\mbfh,t)$ of $F_{\kappa^{n+1}}$ and make the
elements $(\mbfg,j\kappa^n)$ of $\Sigma_{n+1}$ and $(\mbff,t^{\prime})$ of $T_n$ explicit 
such that $(\mbfh,t)=(\mbfg,j\kappa^n)(\mbff,t^{\prime})$. 

Mind the involved maps here: we study the values of $g_m$ and $f_m$ instead of the
“derived” functions $g^{\prime}_m$, $f^{\prime}_m$ usually considered.

\begin{proof}[Proof of the lemma]
  The assertion is true for $T_0$. Now let $n\geq 0$ and assume that
  $T_{n}=F_{\kappa^n}$. We show the induction step by double inclusion.
  \begin{center}
    \small \textsc{First inclusion} 
  \end{center}
  Let us prove that  $\Sigma_{n+1}T_{n} \subseteq F_{\kappa^{n+1}}$. Recall that
  $\Sigma_{n+1}=\cup^{\kappa-1}_{j=0} 
  \Sigma^j_{n+1}$.
  
  Let $\left( \mbff , t \right) \in T_n$ and $j \in \{0,
  \ldots, \kappa-1\}$. Take $ \left(\mbfg , j\kappa^{n} \right) \in
  \Sigma^j_{n+1} $, then the following product 
  \begin{equation*}
    \left(\mbfg ,  j\kappa^n\right)\left(\mbff, t \right)
    =  \Big( \big(g_mf_m\left(\cdot -j\kappa^n\right)\big)_m,
    t+ j\kappa^n \Big)
  \end{equation*}
  verifies $t + j\kappa^n \in
  \left[j \kappa^n,\kappa^{n}-1 + j\kappa^n \right]$ which is contained in $\left[
    0,\kappa^{n+1}-1 \right]$ since $j\leq \kappa-1$. Moreover 
  \begin{equation*}
    g_0(x)f_0(x-j\kappa^n)=
    \begin{cases}
      g_0(x)& \text{if} \ x \in  \left[ 0, j \kappa^n \right]
      \cup \left[ (j+1)\kappa^n, \kappa^{n+1}-1 \right]\\
      f_0(x-j\kappa^n)& \text{if} \ x \in \left[j\kappa^n,(j+1)\kappa^n-1 \right] \\
      0 & \text{else}.
    \end{cases}
  \end{equation*}
  Thus $\supp(g_0f_0(\cdot- j\kappa^n)) \subseteq \left[0,\kappa^{n+1}-1\right]$.
  Furthermore, for all $m \in \{1, \ldots,\Landing (n)\}$
  \begin{align*}
    \supp(g^\prime_m)
    &\subset \left[k_m, j \kappa^n + k_m -1\right]
      \cup \left[(j+1)\kappa^n, \kappa^{n+1}-1 \right]\\
    \supp\left(f^\prime_m\left(\cdot-j\kappa^n \right) \right)
    &\subseteq [j\kappa^n +k_m,(j+1)\kappa^{n}-1],
  \end{align*}
  hence by \cref{Claim:DeriveeProduit} the support of ${\left(
      g_mf_{m}(\cdot-j\kappa^m) \right)}^\prime$ is contained in $[k_m,\kappa^{n+1}-1]$. 

  Now if $\Landing(n+1)=\Landing (n)+1$ consider $m=\Landing
  (n)+1$. In that case $f^{\prime}_m\equiv \neutre$ since $m>\Landing(n)$.
  Thus ${\left(
      g_{m}f_{m}(\cdot-j\kappa^m) \right)}^\prime = g^\prime_n$ whose support is
  contained in $[k_{\Landing(n)+1}, \kappa^{n+1}-1]$.
 
   Finally ${\left( g_{m}f_{m}(\cdot-j\kappa^m)\right)}^\prime \equiv 0$ for all $m \notin [0,
   \Landing(n+1)]$.
   Hence by \cref{Lmm:range} the product $\left(\mbfg,j\kappa^n
   \right)(\mbff,t)$ has range included in $\left[0,\kappa^{n+1}-1\right]$ and thus belongs to $
   F_{\kappa^{n+1}}$. 

  \begin{center}
    \small \textsc{Second Inclusion} 
  \end{center}

  Let us show that $F_{\kappa^{n+1}}$ is contained in $\Sigma_{n+1}T_{n}$. So take
  $\left(\mbfh,t \right) $ in $ F_{\kappa^{n+1}}$. We want to define $\big(\mbff,
  t^\prime \big) \in T_{n}$ and $\left( 
    \mbfg, j\kappa^n \right) \in \Sigma_{n+1}$ such that
  $\left(\mbfg, j\kappa^n\right)\left(\mbff, t^\prime\right) =
  \left(\mbfh,t \right)$.
  First remark that $t<\kappa^{n+1}$ since $\left(\mbfh,t \right)$ belongs to
  $F_{\kappa^{n+1}}$. Thus there exists $t_0, \ldots, t_n$ in $[0,\kappa-1]$ 
  such that $t=\sum^n_{i=0}t_i \kappa^i$. Let $j=t_n$ and $t^\prime =
  \sum^{n-1}_{i=0}t_i \kappa^i$. Then $j$ does belong to $[0,\kappa-1]$ and
  $t^\prime$ to $[0,\kappa^n-1]$. We now have to define $\mbff$ and $\mbfg$ such
  that
  \begin{equation*}
    \left( {\left( g_m f_m\left( \cdot -j\kappa^n \right) \right)}_m
      , t^\prime +j\kappa^n  \right)
    = \big(\mbfh,t \big).
  \end{equation*}
  \begin{figure}[htbp]
    \centering
    \includegraphics[width=0.85\textwidth]{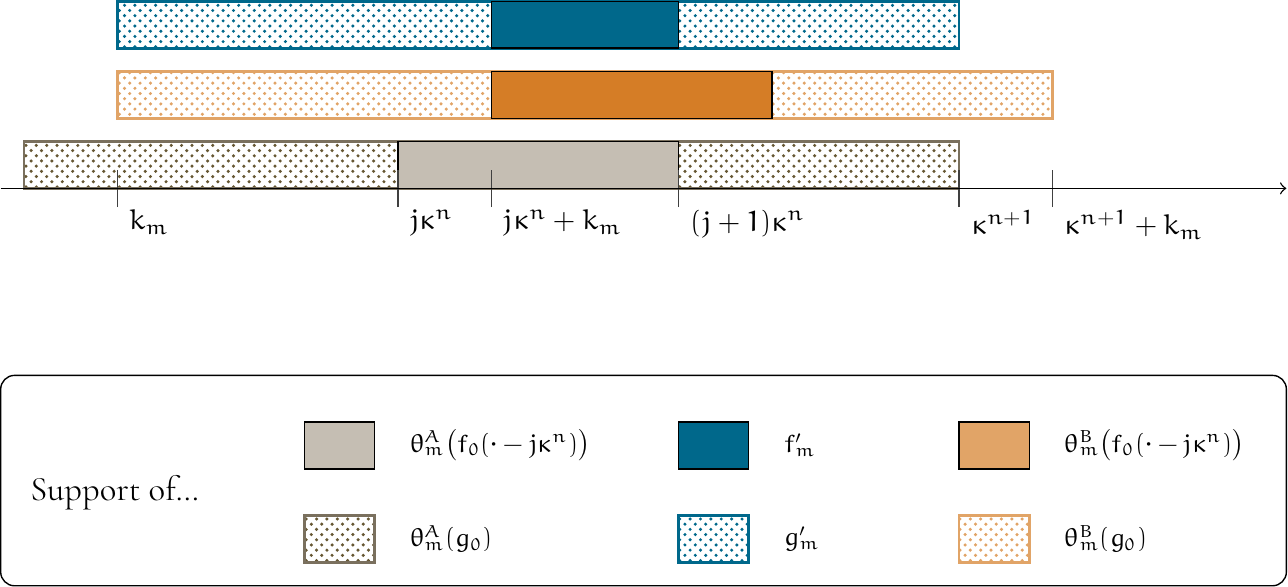}
    \caption{Supports}
    \label{fig:Supports1}
  \end{figure}
  We refer to \cref{fig:Supports1} for an illustration of the different
  supports. Let 
  \begin{align*}
    f_0(x)&:= \begin{cases} 
      h_0(x+j\kappa^n) &\text{if} \ x\in [0,\kappa^n-1],\\
      \neutre & \text{else,}
    \end{cases} \\
    g_0(x)&:= \begin{cases} 
      h_0(x) &\text{if} \ x\in [0,j\kappa^n-1] \cup [(j+1)\kappa^n, \kappa^{n+1}-1],\\
      \neutre & \text{else.}
    \end{cases}
  \end{align*}
  One can verify immediately that $ g_0 f_0\left( \cdot -j\kappa^n \right)=h_0$.
  Then take $m \in [1,\Landing (n)]$ and let 
  \begin{align*}
    {f^\prime}_m(x)
    &:= {\begin{cases}
        {h^\prime}_m(x+j\kappa^n) &\text{if} \ x\in [k_m,\kappa^n-1],\\
        \neutre & \text{else,}
      \end{cases} }\\
    {g^\prime}_m(x)
    &:= {\begin{cases}
        {h^\prime}_m(x) & \text{if}\ x \in [k_m,j\kappa^n+k_m-1]
        \cup [(j+1)\kappa^n,\kappa^{n+1}-1]\\ 
        \neutre &\text{else.}
      \end{cases}} 
  \end{align*}
  
  Now if $\Landing(n+1)=\Landing(n)+1$ then $k_{\Landing(n+1)}\geq
  \kappa^n$ and in that case define
  $g^{\prime}_{\Landing(n+1)}=h^{\prime}_{\Landing(n+1)}$. 
  Finally let ${f^\prime}_{\Landing(n+1)}\equiv \neutre$ and if
  $m> \Landing(n+1)$ let $ {g^\prime}_m\equiv \neutre \equiv {f^\prime}_m$.
  
  With the above definitions $\mbff$ and $\mbfg$ are uniquely defined. Moreover,
  by definition $(\mbfg,j\kappa ^n)$ belongs to
  $\Sigma^j_{n+1}$ and by \cref{Lmm:range} we have $\range(\mbff,t) \subseteq
  [0,\kappa^n-1]$ thus $(\mbff,t^\prime)$ belongs to $T_{n}$.
  Now, using \cref{Lmm:deffm} we verify that $g_mf_m(\cdot
  -j\kappa^n)=h_m$ thus $(\mbfh,t) \in \Sigma_{n+1}T_{n}$.
  \smallskip

  Hence, combining the first and second inclusion we get $F_{\kappa^{n+1}}=T_{n}$.
\end{proof}
We now know that $(T_n)_{n\in \bN}$ is a Følner sequence. To prove \cref{Th:FOTS} we have
to show that $(\Sigma_n)_{n\in \bN}$ a Følner \emph{tiling} shift.

\begin{proof}[Proof of {\cref{Th:FOTS}}]
  The sequence $(T_n)_{n\in \bN}$ is a Følner sequence, by the last lemma. Thus we
  only have to show that for all $\sigma \neq \tilde{\sigma} \in \Sigma_{n+1}$,
  $\sigma T_{n}  \cap \tilde{\sigma} T_{n}=\emptyset$. So let us denote by
  $(\mbfh,t)$ an element of $\sigma T_{n}  \cap \tilde{\sigma} T_{n}$. We
  distinguish two cases. 
  
  First if $\sigma \in \Sigma^j_{n+1}$ and $\tilde{\sigma} \in
  \Sigma^i_{n+1}$ for some $i \neq j$, then the cursor of $\sigma$ is equal to
  $j\kappa^n$ and the one of $\tilde{\sigma}$ to $i\kappa^n$. Thus
  \begin{align*}
    \left(\mbfh, t\right) \in \sigma T_{n} & \Rightarrow t \in [j \kappa^n, (j+1)\kappa^n-1],\\
    \left(\mbfh, t\right) \in \tilde{\sigma} T_{n} & \Rightarrow t \in [i\kappa^n,(i+1)\kappa^{n}-1].
  \end{align*}
  But since $i \neq j$ these two intervals are disjoint, thus $\sigma T_{n} \cap
  \tilde{\sigma} T_{n}=\emptyset$.
  \smallskip
   
  Now fix $j\in \{0, \ldots, \kappa-1\}$ and take $\sigma, \tilde{\sigma} \in
  \Sigma^{j}_{n+1}$. Let $\sigma:=\left(\mbfg ,j \kappa^n\right)$ and
  $\tilde{\sigma}:=\left(\tilde{\mbfg} ,j\kappa^n\right)$. Assume 
  that there exists $\left( \mbff,t \right), \left( \tilde{\mbff}, \tilde{t}
  \right) \in T_{n}$ such that $(\mbfg,j\kappa^n) \left( \mbff,t \right)= (\tilde{\mbfg},j\kappa^n)
  \left( \tilde{\mbff}, \tilde{t}\right)$. Then 
  \begin{equation} \label{eq:fgtiling}
    \forall m \in \bN,\ \forall x \in \bZ
    \quad g_mf_m(x- j\kappa^n)=\tilde{g}_m(x)\tilde{f}_m(x - j\kappa^n). 
  \end{equation}
  First remark that
  \begin{align*}
    \sigma, \tilde{\sigma} \in \Sigma^j_{n+1}
    \quad  &\Longrightarrow  \quad \supp(g_0), \, \supp(\tilde{g}_0)
             \subseteq [0,j\kappa^n-1] \cup [(j+1)\kappa^n, \kappa^{n+1}-1]\\
    \left( \mbff,t \right), \left( \tilde{\mbff}, \tilde{t}\right) \in T_{n}
    \quad &\Longrightarrow \quad  \supp({f_0(\cdot - j\kappa^n)}),\,
            \supp(\tilde{f}_0(\cdot - j\kappa^n)) \subseteq [j\kappa^n, (j+1)\kappa^n-1].
  \end{align*}
  In other word the support of $g_0$ (resp $\tilde{g}_0$) is disjoint from
  the one of $f_0(\cdot - j\kappa^n)$ (resp ${\tilde{f}}_0(\cdot - j\kappa^n)$). 
  Combining this with \cref{eq:fgtiling} we obtain that $g_0=\tilde{g}_0$ and
  $f_0=\tilde{f}_0$. 

  Now let $m>0$ and let us show that $g_m=\tilde{g}_m$. Due to supports overlap
  (see \cref{fig:Chevauchement}) we need to decompose
  $[0,\kappa^{n+1}-1]$ in five subintervals, namely
  \begin{align*}
    \Big[ 0, \kappa^{n+1}-1 \Big] =
    &\Big[ 0, j\kappa^n-1 \Big]\sqcup
      \Big[ j\kappa^n, j\kappa^n + k_m -1 \Big] \sqcup
      \Big[ j\kappa^n + k_m, (j+1)\kappa^n -1 \Big],\\
    &\sqcup
    \Big[(j+1)\kappa^n,(j+1)\kappa^n + k_m -1 \Big]\sqcup
    \Big[(j+1)\kappa^n + k_m,\kappa^{n+1} -1 \Big].
  \end{align*}
  
  If $x\leq j\kappa^n-1$ or $x\geq (j+1)\kappa^n+k_m$, then
  $f_m(x-j\kappa^n)=\neutre = \tilde{f}_m(x-j \kappa^n)$ 
  and thus $g_m(x)=\tilde{g}_m(x)$ by \cref{eq:fgtiling}.

  If $x\in [j\kappa^n, j\kappa^n + k_m -1]$ then using \cref{Lmm:deffm} and the fact
  that on that subinterval $f_0=\tilde{f}_0$, we get
  \begin{equation*}
    f_m(x-j\kappa^n) = \theta^A_0\left( f_0\left( x-j\kappa^n\right) \right)
    = \theta^A_0\left( {\tilde{f}}_0\left( x-j\kappa^n\right) \right)={\tilde{f}}_m(x-j\kappa^n). 
  \end{equation*}
  Hence by \cref{eq:fgtiling} we get $g_m(x)=\tilde{g}_m(x)$.
  
  If $x$ belongs to $ [j\kappa^n+k_m, (j+1) \kappa^n -1]$ then
  $g_m(x)=\tilde{g}_m(x)=\neutre$ and thus \cref{eq:fgtiling} implies that
  $f_m(x-j\kappa^n)=\tilde{f}_m(x-j\kappa^n)$, that is to say $f_m$ and 
  ${\tilde{f}}_m$ coincide on $[k_m, \kappa^n-1]$.
  
  Finally if $x\in [(j+1)\kappa^n,(j+1)\kappa^n+k_m-1]$ then using \cref{Lmm:deffm} and the fact
  that $f_0=\tilde{f}_0$ on that subinterval, we get
  \begin{equation*}
    f_m(x-j\kappa^n) = \theta^B_0\left( f_0\left( x-j\kappa^n-k_m \right) \right)
    = \theta^B_0\left( {\tilde{f}}_0\left( x-j\kappa^n-k_m \right) \right)={\tilde{f}}_m(x). 
  \end{equation*}
  Hence by \cref{eq:fgtiling}, we have $g_m(x)=\tilde{g}_m(x)$.

  Thus $\mbfg=\tilde{\mbfg}$ and then $\sigma=\tilde{\sigma}$. Which concludes
  the proof of the theorem.
\end{proof}
\begin{figure}[htbp]
  \centering
  \includegraphics[width=0.8\textwidth]{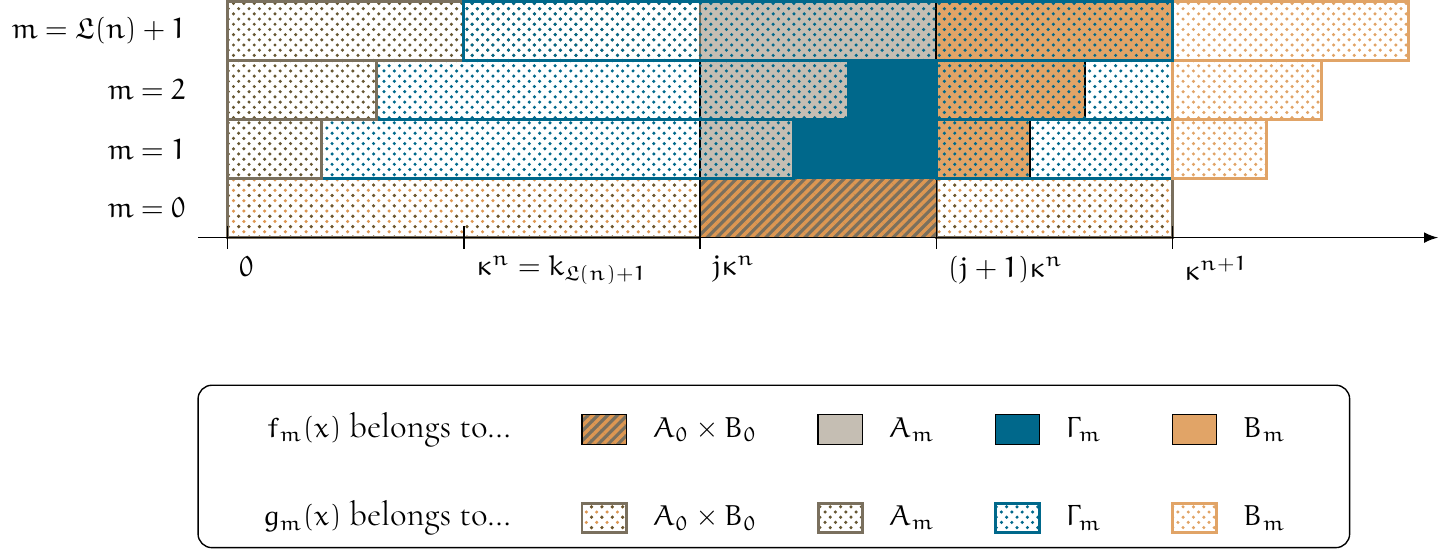}
  \caption{Supports overlap}
  \label{fig:Chevauchement}
\end{figure}

\subsubsection{Diameter and boundary}
Let us now quantify our shifts sequence.

\begin{Prop}\label{Lmm:RkEpskTn}
  The sequence $(\Sigma_n)_{n\in \bN}$ defined in \cref{subsec:DefDesShifts} is a
  $(R_n,\varepsilon_n)$-Følner tiling shift where
  \begin{equation*}
    R_n=\Cdiam \kappa^n l_{\Landing (n)} \qquad \varepsilon_n=\frac{2}{\kappa^{n}},
  \end{equation*}
  for some strictly positive constant $\Cdiam$.
\end{Prop}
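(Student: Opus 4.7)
Plan: I would verify both defining conditions of \cref{Def:FOTS} by exploiting the identification $T_n = F_{\kappa^n}$ from \cref{Lmm:DefTnBZ}. The diameter bound comes directly from the metric estimate of \cref{Prop:metric}, while the boundary bound follows from a generator-by-generator analysis in which only the cursor translations $(\mathrm{id}, \pm 1)$ contribute, doing so exactly as in the Følner estimate of \cref{Prop:FolnerSequence}.

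For the diameter, I would fix $(\mbff, t) \in T_n$ so that $\range(\mbff, t) \subseteq [0, \kappa^n - 1]$, and apply \cref{Prop:metric}:
\[
|(\mbff, t)|_{\BZ} \;\leq\; 500 \sum_{m=0}^{\Landing(n)} 9 \bigl( \range(f_m, t) + E_m(f_m) \bigr).
\]
The range term is bounded by $\kappa^n$. For the essential contribution, each interval $I^m_j$ has length $k_m/2$, so at most $2\kappa^n/k_m + 2$ such intervals meet $[0, \kappa^n - 1]$; each contributes at most $\diam{\Gs} \leq \Clm l_m$ by hypothesis~(H). Since $k_m \leq \kappa^n$ for $m \leq \Landing(n)$, this gives $E_m(f_m) = O(\kappa^n l_m)$. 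Using that $(l_m)_m$ is a subsequence of a geometric sequence, $\sum_{m=0}^{\Landing(n)} l_m$ is controlled by a constant multiple of its last term $l_{\Landing(n)}$. Since $\neutre \in T_n$, the diameter is at most $2 \max_{\delta \in T_n} |\delta|_{\BZ}$, which is $\leq \Cdiam \kappa^n l_{\Landing(n)}$ for a suitable $\Cdiam$.

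For the boundary bound, I would check $|T_n s \setminus T_n| \leq (2/\kappa^n) |T_n|$ for each $s \in S_{\BZ}$. When $s = ((a_m\delta_0)_m, 0)$ or $s = ((b_m\delta_{k_m})_m, 0)$, right multiplication modifies a single lamp coordinate at position $t$ or $t + k_m$ by an element of $A_m$ or $B_m$ respectively; since such elements carry no commutator, \cref{Claim:DeriveeProduit} leaves the derived functions $f'_m$ unchanged, and the modified position stays inside the interval allowed by \cref{Lmm:range}, so $T_n \cdot s \subseteq T_n$ and the contribution is zero. When $s = (\mathrm{id}, \pm 1)$, the lamp is untouched and only the cursor shifts by $\pm 1$; by \cref{Lmm:range} the product exits $T_n$ precisely when the cursor sat at the extremity $\kappa^n - 1$ or $0$. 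These elements form a fraction $1/\kappa^n$ of $T_n$ (since $t$ is uniformly distributed over $[0, \kappa^n - 1]$), hence $|T_n s \setminus T_n| \leq |T_n|/\kappa^n \leq \varepsilon_n |T_n|$.

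The main obstacle is the essential-contribution estimate: one must combine the combinatorial count of the intervals $I^m_j$ meeting $[0, \kappa^n - 1]$ with the geometric control on $(l_m)$ coming from hypothesis~(H) to turn $\sum_{m \leq \Landing(n)} l_m$ into $O(l_{\Landing(n)})$, and then absorb the subdominant $\sum_m \kappa^n$ term (of size $\kappa^n(\Landing(n)+1)$) into the leading $\kappa^n l_{\Landing(n)}$ term.
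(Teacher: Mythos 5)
Your proposal is correct and follows essentially the same approach as the paper: the diameter bound is obtained by applying \cref{Prop:metric} to elements of $T_n = F_{\kappa^n}$, counting the intervals $I^m_j$ that meet $[0,\kappa^n-1]$, and using the geometric growth of $(l_m)$ to absorb the sum $\sum_{m\le\Landing(n)} l_m$ into $O(l_{\Landing(n)})$, exactly as in the paper's Lemma \ref{Lmm:DiamFn}; for the Følner ratio, the paper simply invokes the identity $|\partial F_{\kappa^n}|/|F_{\kappa^n}| = 2/\kappa^n$ already computed in the proof of \cref{Prop:FolnerSequence}, whereas you re-derive it generator by generator (observing that the lamp generators leave $T_n$ invariant and only the cursor shift can exit), which amounts to the same argument.
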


First we prove the following lemma.
\begin{Lmm}\label{Lmm:DiamFn}
  There exists $\Cdiam>0$ depending only on $\BZ$ such that
  $\diam{F_n} \leq \Cdiam n l_{\landing(n-1)}$  for all $n\in \bN$.  
\end{Lmm}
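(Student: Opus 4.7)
The plan is to bound the word length $|(\mbff,t)|_{\BZ}$ of an arbitrary element of $F_n$ and then deduce the diameter estimate via the triangle inequality (diameter $\leq$ twice the maximal word length). The starting point is the two-step bound recorded in \cref{Prop:metric}: since $(\mbff,t)\in F_n$ has $\range(\mbff,t)\subseteq [0,n-1]$, we have $\landing(\range(\mbff,t))\leq \landing(n-1)$, so
\begin{equation*}
  |(\mbff,t)|_{\BZ}\ \leq\ 500 \sum_{m=0}^{\landing(n-1)}|(f_m,t)|_{\BZ_m}
  \ \leq\ 4500 \sum_{m=0}^{\landing(n-1)}\bigl(\range(f_m,t)+E_m(f_m)\bigr).
\end{equation*}
The first summand is trivially bounded: $\range(f_m,t)\leq n$.

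The main step is to estimate the essential contribution $E_m(f_m)$. Recall that the intervals $I^m_j$ have length $k_m/2$. Since $\range(f_m,t)\subseteq [0,n-1]$ and $m\leq\landing(n-1)$ so that $k_m\leq n-1$, the number of intervals $I^m_j$ meeting the range is at most $2n/k_m+2\leq 4n/k_m$. On each such interval the maximum of $|f_m(x)|_{\Gamma_m}$ is at most $\diam{\Gamma_m}\leq\Clm l_m$ by hypothesis \textbf{(H)}. Therefore
\begin{equation*}
  E_m(f_m)\ \leq\ k_m\cdot\frac{4n}{k_m}\cdot \Clm l_m\ =\ 4\Clm\, n\, l_m,
\end{equation*}
and combining with $\range(f_m,t)\leq n\leq n\, l_m$ yields $|(f_m,t)|_{\BZ_m}\leq 9(1+4\Clm)n\, l_m$.

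It remains to sum over $m\leq\landing(n-1)$. By \textbf{(H)}, $(l_m)_m$ is a subsequence of a geometric sequence with ratio $\lambda\geq 2$, so $l_{m+1}/l_m\geq\lambda\geq 2$, and the geometric series bound gives $\sum_{m=0}^{\landing(n-1)}l_m\leq 2 l_{\landing(n-1)}$. Plugging in, we get $|(\mbff,t)|_{\BZ}\leq C\, n\, l_{\landing(n-1)}$ for some constant $C$ depending only on $\Clm$ and $\lambda$, and doubling gives the desired diameter bound. The only subtlety is the counting of intervals $I^m_j$ and the check that $m\leq\landing(n-1)$ forces $k_m\leq n$ so that $2n/k_m+2\leq 4n/k_m$; everything else is mechanical from \cref{Prop:metric} and \textbf{(H)}.
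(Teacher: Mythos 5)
Your proof is correct and follows essentially the same route as the paper: bound $E_m(f_m)$ by counting the intervals $I^m_j$ meeting the range, use $\diam{\Gamma_m}\leq\Clm l_m$ from \textbf{(H)}, then sum the geometric sequence $(l_m)$. The only differences are cosmetic bookkeeping of constants (your $4n/k_m$ versus the paper's $2(n-2)/k_m+1$), and you are actually slightly more careful than the paper in explicitly doubling to pass from a bound on word length to a bound on the diameter.
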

To show this result, we use \cref{Prop:metric}.

\begin{proof} Let $n \in \bN$ and $(\mbff,t) \in F_n$. 
  First, take $m\leq \landing(n-1)$ and let us bound $E_m$ by above. Recall that
  $I^m_j=[jk_m/2,(j+1)k_m/2-1]$. Since $(f,t)$ belongs to $F_{n}$ its range is
  included in $[0,n-1]$, thus 
  \begin{align*}
    &\left\vert\left\{j \in \bZ: \range(f_m,t)\cap I^m_j \neq \emptyset\right\} \right\vert\\
    \leq&
      \left\vert \big\{j \in \bZ:\, [0,n-1]\cap [jk_m/2,(j+1)k_m/2-1]
      \neq \emptyset\big\} \right\vert,\\
    \leq& \left\vert \big\{j \in \bZ:\, jk_m/2\leq n -1\
      \text{and}\ (j+1)k_m/2\geq 1 \big\} \right\vert,\\
    \leq& \frac{2(n-2)}{k_m} + 1.
  \end{align*}
  Moreover remark that $|f_m(x)|_{\Gamma_m} \leq \diam{\Gamma_m} \leq \Clm l_m$
  for all $x$, thus 
  \begin{align*}
    E_m(f_m)
    &=  k_m \sum_{j: \range(f_m,t)\cap I^m_j \neq \emptyset}
      \max_{x\in I^m_j} \left( |f_m(x)|_{\Gamma_m}-1 \right)_{+},\\
    &\leq k_m  \sum_{j: \range(f_m,t)\cap I^m_j \neq \emptyset} l_m,\\
    &\leq k_m l_m \left(\frac{2(n-2)}{k_m}+1\right)=l_m(2(n-2)+k_m).
  \end{align*}
  Thus, applying the second part of \cref{Prop:metric} we get
  \begin{align*}
    |(f_m,t)|_{\BZ_m}\ &\leq\  9 \big( \range(f_m,t) + E_m(f_m)  \big) 
   \  \leq\  9\left(n +l_m\big(2(n-2)+k_m\big)\right).
  \end{align*}
  But if $m\leq\landing(n-1)$ then $k_m\leq n-1 \leq n$ thus we can bound $
  |(f_m,t)|_{\BZ_m}$ by above by $9n(3l_m+1)$. Now remark that
  $\landing\left(\range(\mbff,t) \right)\leq \landing(n-1)$. Thus, 
  using the preceding inequality and the first part of \cref{Prop:metric}, we
  get
  \begin{align*}
    \left\vert {(\mbff,t)}\right\vert_{\BZ}
     \leq 500 \sum^{\landing\left( \range(\mbff,t) \right)}_{m=0} |(f_m,t)|_{\BZ_m}
    & \leq 500 \sum^{\landing (n-1)}_{m=0} 9n\left(3l_m+1\right),\\
    & \leq 4500 n \sum^{\landing (n-1)}_{m=0}\left(3l_m+1  \right)
  \end{align*}
  Finally, since $l_m$ is a subsequence of a geometric sequence, there exists
  $C_l>0$ such that $ \sum^{\landing (n-1)}_{m=0}\left(3l_m+1  \right)\leq C_l
  l_{\landing (n-1)}$. Denoting $\Cdiam:=4500 C_l$ we get the lemma.
\end{proof}

Let us now show the wanted proposition.
\begin{proof}[Proof of \cref{Lmm:RkEpskTn}]
  First remark that by the proof of \cref{Prop:FolnerSequence} we have
  \begin{equation*}
    \varepsilon_n=\frac{|\partial T_n |}{|T_n|} =
    \frac{|\partial F_{\kappa^{n}} |}{|F_{\kappa^{n}}|}= \frac{2}{\kappa^n}.
  \end{equation*}
  Now by \cref{Lmm:DiamFn} we have $\diam{T_n}=\diam{F_{\kappa^n}}\leq \Cdiam
  \kappa^n l_{\Landing(n)}$. 
\end{proof}

\section{\texorpdfstring{Coupling with $\bZ$}{Coupling with the group of
    integers}}\label{Sec:CouplageZ}
Our aim in this section is to show \cref{Th:CouplingwithZ}. 
What we actually show is that a diagonal product $\BZ$ admits a coupling with
$\bZ$ satisfying \cref{Th:CouplingwithZ}. We start by defining a Følner tiling
shift for $\bZ$ in \cref{subsec:TilesZ}. We compute in \cref{Subsec:Estimates}
an estimate of the diameter of such tiles, namely the cardinal $|T_n|$. 
We conclude by showing the integrability of the coupling
using the criterion given by \cref{Prop:ConditionpourOE}. And then show that
$\BZ$ thus considered satisfies \cref{Th:CouplingwithZ}.
\subsection{\texorpdfstring{Tiles for $\bZ$}{Tiles for the integers}}\label{subsec:TilesZ}
We will denote by $(\SZ_n)_{n\in \bN}$ a Følner tiling shift of $\bZ$ and by
$(\TZ_n)_n$ the corresponding tiles. 

Consider $(\Sigma_n)_n$ and $(T_n)_n$ as defined in
\cref{subsec:DefDesShifts,Lmm:DefTnBZ} respectively. In order to use
\cref{Prop:ConditionpourOE} to get an orbit equivalence coupling between $\bZ$
and $\BZ$ we need $\Sigma_{n+1}$ and ${\SZ}_{n+1}$ to have the same number of
elements. We thus define
\begin{equation}\label{eq:TZn}
  \begin{cases}
    &\SZ_0=\big[0,|T_0|-1\big] \\
    \forall n \in \bN &
    \SZ_{n+1}:=\big\{ 0, |T_n|, 2|T_n|, \ldots,
    \left( \left\vert {\Sigma_{n+1}}\right\vert - 1 \right)|T_n|\big\}.
  \end{cases}
\end{equation}

It induces a sequence $({\TZ}_n)_{n\in \bN}$ defined by $\TZ_0=\SZ_0$ and
$\TZ_{n+1}=\SZ_{n+1}\TZ_n$ for all $n\geq 0$. We are going to prove that
$(\SZ_n)_{n\in \bN}$ is a Følner tiling shift for $\bZ$.

\begin{Prop}\label{Prop:FOTSZ}
  The sequence $(\SZ_n)_{n\in \bN}$ defined by \cref{eq:TZn} is a $(\RZ_n, \epsZ_n)$-Følner
  tiling shifts for $\bZ$ with
  \begin{equation*}
    \RZ_n= \left\vert{T_n}\right\vert \qquad \epsZ_n={2}/{\vert T_n \vert}.
  \end{equation*}
  Moreover the induced sequence $(\TZ_n)_{n\in \bN}$ verifies $\TZ_n=\left[ 0,
    \left\vert{T_n}\right\vert -1 \right]$ for all $n \in \bN$. 
\end{Prop}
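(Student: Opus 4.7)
The plan is to verify the three conditions of Definition of a Følner tiling shift (\cref{Def:FOTS}) and then read off $\RZ_n$ and $\epsZ_n$ from the explicit description of $\TZ_n$. Everything will fall out of a simple induction identifying $\TZ_n$ with an interval; the only thing that needs care is matching cardinalities so that the translates do not overlap.

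I would first prove by induction on $n$ that $\TZ_n = [0,|T_n|-1]$ and that the union defining $\TZ_{n+1}$ is disjoint. The base case is immediate from the definition $\TZ_0 = \SZ_0 = [0,|T_0|-1]$. For the inductive step, assume $\TZ_n = [0,|T_n|-1]$. By construction
\begin{equation*}
  \TZ_{n+1} \;=\; \SZ_{n+1} + \TZ_n \;=\; \bigcup_{k=0}^{|\Sigma_{n+1}|-1}\bigl(k|T_n| + [0,|T_n|-1]\bigr)
  \;=\; [0,|\Sigma_{n+1}|\,|T_n|-1],
\end{equation*}
because consecutive blocks of length $|T_n|$ fit together exactly. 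Since $(\Sigma_n)_{n\in\bN}$ is a Følner tiling shift for $\BZ$ (\cref{Th:FOTS}), we have $|T_{n+1}| = |\Sigma_{n+1}|\,|T_n|$, so $\TZ_{n+1}=[0,|T_{n+1}|-1]$. The same description shows that the translates $k|T_n| + \TZ_n$ for $k=0,\dots,|\Sigma_{n+1}|-1$ are pairwise disjoint, which is exactly the tiling condition $\TZ_{n+1} = \sqcup_{\sigma \in \SZ_{n+1}}(\sigma + \TZ_n)$.

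Next I would check that $(\TZ_n)_{n\in\bN}$ is a Følner sequence of $\bZ$. Since $|T_n|\to +\infty$ (this follows either from the fact that $(T_n)_n$ is a Følner sequence for the infinite group $\BZ$ or from $|T_n|=|F_{\kappa^n}|$ which grows with $n$), for any fixed $g\in\bZ$ we have
\begin{equation*}
  \frac{|(g+\TZ_n)\setminus \TZ_n|}{|\TZ_n|} \leq \frac{|g|}{|T_n|}\underset{n\to\infty}{\longrightarrow} 0.
\end{equation*}
Hence $(\SZ_n)_{n\in\bN}$ is a (left) Følner tiling shift.

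Finally the quantitative estimates are read off directly. The diameter of $\TZ_n=[0,|T_n|-1]$ is $|T_n|-1 \leq |T_n| = \RZ_n$. For the boundary, take $s\in\{-1,+1\}$ (a symmetric generating set of $\bZ$); then $|(s+\TZ_n)\setminus \TZ_n| = 1$, so
\begin{equation*}
  \frac{|(s+\TZ_n)\setminus \TZ_n|}{|\TZ_n|} \leq \frac{2}{|T_n|} = \epsZ_n,
\end{equation*}
which gives the claimed $(\RZ_n,\epsZ_n)$-control. There is no real obstacle here: the whole argument reduces to the bookkeeping that the cardinalities $|\SZ_{n+1}|=|\Sigma_{n+1}|$ were chosen so that the disjoint-union condition in the definition of $\TZ_{n+1}$ holds, and once that is in place the interval description of $\TZ_n$ makes the Følner property and the two quantitative bounds trivial.
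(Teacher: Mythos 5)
Your proof is correct and follows essentially the same route as the paper: identify $\TZ_n$ with the interval $[0,|T_n|-1]$, deduce disjointness of the translates from the spacing $|T_n|$ of the shifts, and read off the diameter and boundary bounds directly from the interval description. The paper leaves the interval identification as "one can easily prove"; your induction (using $|T_{n+1}|=|\Sigma_{n+1}|\,|T_n|$, which indeed follows from \cref{Th:FOTS}) simply makes that step explicit.
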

\begin{proof} Let $(\SZ_n)_{n\in \bN}$ be as defined by \cref{eq:TZn} and recall that the
  induced tiling $(\TZ_n)_{n\in \bN}$ is the sequence defined by $\TZ_0:=\SZ_0$ and 
  $\TZ_{n+1}=\SZ_{n+1} \TZ_n$ for all $n \in \bN$.
  One can easily prove that for all $n\geq 0$
  \begin{equation}
    \TZ_n= \left[ 0, \left\vert{T_n}\right\vert -1 \right]. 
  \end{equation}
  It is now immediate to check that $\diam{\TZ_n}=|T_n|$ and $|\partial \TZ_n
  |/|\TZ_n|=2/|T_n|$. Furthermore note that if $\sigma, \sigma^\prime \in
  \SZ_{n+1}$ such that $\sigma\neq \sigma^{\prime}$ then
  $d_{\bZ}(\sigma,\sigma^\prime)\geq |T_n|=\diam{\TZ_n}$. Thus for such $\sigma$
  and 
  $\sigma^\prime$ we get $\sigma \TZ_n \cap \sigma^\prime T_n = \emptyset$.
  Therefore $(\Sigma_n)_{n\in \bN}$ is a Følner tiling shift and the proposition follows
  from the above quantifications on $T_n$.
\end{proof}

\subsection{Estimates: diameter and boundary}\label{Subsec:Estimates}
The integrability of the coupling between $\bZ$ and $\BZ$ depends on
$(R_n,\varepsilon_n)$ and $(\RZ_n,\epsZ_n)$ but by the above proposition, that last
couple depends on the value of the cardinality of the tiles $(T_n)_{n\in \bN}$.
The aim of this section is to give 
estimates of $|T_n|$ involving only terms of $(k_m)_{m\in \bN}$ and
$(l_m)_{m\in \bN}$.
First let us make the value of $|T_n|$ precise.
\begin{Lmm}\label{Lmm:CardTn}
  The sequence $(T_n)_n$ defined in \cref{Th:FOTS} verifies 
  \begin{equation*}
    |T_n|=\kappa^n \big( {|A| |B|} \big)^{\kappa^n}
    \prod^{\Landing(n)}_{m=1} {\left\vert {\Gamma^\prime}_m \right\vert}^{\kappa^{n}-k_m}.
  \end{equation*}
\end{Lmm}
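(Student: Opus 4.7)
The plan is to combine \cref{Lmm:DefTnBZ} and \cref{Lmm:range} with the parametrisation of $\mbff$ given by \cref{Lmm:deffm} to count $|F_{\kappa^n}|$ directly.

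First I would apply \cref{Lmm:DefTnBZ} to replace $T_n$ by $F_{\kappa^n}$, so that the task becomes computing $|F_{\kappa^n}|$. Next I would use \cref{Lmm:deffm}, which says that an element $(\mbff,t)$ of $\BZ$ is uniquely determined by the triple consisting of $t$, the lamp configuration $f_0 : \bZ \to \Gamma_0 = A_0\times B_0$, and the sequence of derived configurations $(f^\prime_m)_{m\in\bN}$ with $f^\prime_m : \bZ \to \Gsp$. This reduces the count of $F_{\kappa^n}$ to counting the admissible triples.

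Then I would invoke \cref{Lmm:range} to translate the condition $\range(\mbff,t)\subseteq [0,\kappa^n-1]$ into constraints on these parameters:
\begin{itemize}
\item $t$ ranges over $[0,\kappa^n-1]$, contributing a factor $\kappa^n$;
\item $\supp(f_0)\subseteq [0,\kappa^n-1]$ and $f_0$ takes values in $\Gamma_0=A_0\times B_0$, contributing $(|A||B|)^{\kappa^n}$ since $|\Gamma_0|=|A||B|$;
\item for each $1\leq m\leq \Landing(n)$, $\supp(f^\prime_m)\subseteq [k_m,\kappa^n-1]$ with $f^\prime_m$ taking values in $\Gsp$, contributing $|\Gsp|^{\kappa^n-k_m}$;
\item for $m>\Landing(n)$, $f^\prime_m\equiv \neutre$ is forced, contributing a factor $1$.
\end{itemize}
Multiplying these independent contributions yields the claimed formula.

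The only step that requires any care is verifying that these parameters really are independent, i.e.\ that \emph{every} admissible triple $(t, f_0, (f^\prime_m)_m)$ satisfying the support conditions of \cref{Lmm:range} corresponds to a genuine element of $F_{\kappa^n}$. This is exactly the content of the backward direction of \cref{Lmm:range}, whose proof constructs such an element explicitly by writing first $f_0$ and then each $f^\prime_m$ without the cursor leaving $[0,\kappa^n-1]$. Once this is cited, the enumeration is a straightforward product of independent choices and no genuine obstacle remains.
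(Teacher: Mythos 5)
Your proof is correct and follows essentially the same route as the paper: reduce to counting $F_{\kappa^n}$ via \cref{Lmm:DefTnBZ}, parametrise elements by $(t, f_0, (f^\prime_m)_m)$ using \cref{Lmm:deffm}, translate the range condition into support constraints with \cref{Lmm:range}, and multiply the independent choices. The paper treats the independence of the parameters as implicit, so your explicit appeal to the backward direction of \cref{Lmm:range} is a welcome clarification rather than a deviation.
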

\begin{proof}
  Recall that $T_n=F_{\kappa^n}=\left\{(\mbff, t) \mid \range
    \left(\mbff,t\right) \subseteq \{0, \ldots, \kappa^n-1\} \right\}$ for all
  $n\in \bN$. We use here \cref{Lmm:range} linking range and supports.
  Let $n\in \bN$ and take $(\mbff,t)\in T_n$, then there are exactly $\kappa^n$
  values of $t$ possible. Moreover $\mbff$ is uniquely determined by $f_0$ and
  $f^\prime_1, \ldots, f^\prime_{\Landing(n)}$ (see \cref{Lmm:deffm}). But $f_0$
  is supported on $[0,\kappa^n-1]$ which is set of cardinal $\kappa^n$ so  
  there are exactly $\big( {|A| |B|} \big)^{\kappa^n}$ possible values for
  $f_0$. Moreover if $m>0$ then remark that $f^\prime_m$ is supported on
  $[k_m,\kappa^n-1]$ which has
  $\kappa^n-k_m$ elements so there are exactly
  ${\left\vert \Gamma^\prime_m \right\vert}^{\kappa^n-k_m}$ possible values for
  $f^\prime_m$. Thus the number of elements in $T_n$ is
  \begin{equation*}
    {\kappa}^{n} \big( {|A| |B|} \big)^{\kappa^n}
    \prod^{\Landing(n)}_{m=1} {\left\vert \Gamma^\prime_m \right\vert}^{\kappa^n-k_m}.
  \end{equation*}
\end{proof}

Now let us bound $|T_n|$ so that the bounds depend only on $(\kappa^m)_{m\in \bN}$ and
$(l_m)_{m\in \bN}$.  

\begin{Prop}\label{Prop:EncadrementLogTn}
  There exists two constants $C_2, \CTn >0$ 
  such that for all $n \in \bN$,
  \begin{equation*}
    C_2 \kappa^{n-1}l_{\Landing(n)}\leq \ln |T_n| \leq \CTn \kappa^nl_{\Landing(n)}.
  \end{equation*}
\end{Prop}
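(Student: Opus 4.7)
The plan is to start from the explicit formula in \cref{Lmm:CardTn} and take logarithms, writing
\begin{equation*}
\ln|T_n| \ =\ n\ln\kappa \ +\ \kappa^n \ln(|A||B|) \ +\ \sum_{m=1}^{\Landing(n)}(\kappa^n - k_m)\ln|\Gsp|.
\end{equation*}
The heart of the argument will be to control $\ln|\Gsp|$ by $l_m$. Hypothesis \textbf{(H)} says that the quotient map $A_m\times B_m \to \Gamma_m/\Gsp$ is an isomorphism, so $|\Gsp|=|\Gamma_m|/(|A||B|)$, and combining this with the two-sided estimate \eqref{eq:encadrementlngammap} gives $\ln|\Gsp| \simeq l_m$ up to an additive constant depending only on $\BZ$.

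For the upper bound, I would bound $\kappa^n - k_m \le \kappa^n$ term by term and factor out $\kappa^n$. Using that $(l_m)$ is a subsequence of a geometric sequence of ratio $\ge 2$, the partial sum $\sum_{m=1}^{\Landing(n)} l_m$ is bounded by a constant multiple of its largest term $l_{\Landing(n)}$ (this kind of bound was already used in the proof of \cref{Lmm:DiamFn}). The additive constants from \eqref{eq:encadrementlngammap}, the $\kappa^n \ln(|A||B|)$ and $n\ln\kappa$ terms are then all dominated by $\kappa^n l_{\Landing(n)}$ since $n\le \kappa^n$ and $l_{\Landing(n)} \ge 1$, yielding the desired constant $\CTn$.

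For the lower bound, I would discard every term in the sum except the largest one, $(\kappa^n - k_{\Landing(n)})\ln|\Gsp|$ with $m=\Landing(n)$. The key observation is that $(k_m)_m$ is a subsequence of $(\kappa^m)_m$, so $k_{\Landing(n)}$ is itself a power of $\kappa$ strictly smaller than $\kappa^n$; consequently $k_{\Landing(n)} \le \kappa^{n-1}$, and therefore
\begin{equation*}
\kappa^n - k_{\Landing(n)} \ \ge\ (\kappa-1)\kappa^{n-1} \ \ge\ \kappa^{n-1}.
\end{equation*}
Together with $\ln|\Gamma^\prime_{\Landing(n)}| \ge c\, l_{\Landing(n)}$ (valid once $l_{\Landing(n)}$ exceeds an absolute threshold coming from the additive constants), this delivers the desired $C_2\kappa^{n-1}l_{\Landing(n)}$.

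The main obstacle will be the edge cases: when $\Landing(n)=0$ (for instance the lamplighter limit $k_1=+\infty$, where the sum is empty), or when $l_{\Landing(n)}$ is too small for the additive constant to be absorbed. In both situations one can instead fall back on the second contribution $\kappa^n\ln(|A||B|) \ge \kappa^n\ln 2$, which is already comparable to $\kappa^{n-1}l_0$, and choose $C_2$ small enough to handle the finitely many remaining base cases uniformly. This conceptual point aside, the computation is straightforward bookkeeping on geometric sums and the defining inequalities of $\Landing$.
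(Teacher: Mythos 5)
Your argument is correct and is essentially the same as the paper's: take $\ln$ of the explicit formula from \cref{Lmm:CardTn}, use $|\Gsp|=|\Gamma_m|/(|A||B|)$ together with \eqref{eq:encadrementlngammap} to estimate $\ln|\Gsp|$ by $l_m$, bound the geometric sum $\sum l_m$ by a constant multiple of its top term for the upper bound, and keep only the $m=\Landing(n)$ term plus the observation $\kappa^n-k_{\Landing(n)}\geq\kappa^{n-1}$ for the lower bound. (The paper packages the estimate on the $\prod\lvert\Gsp\rvert^{\kappa^n-k_m}$ factor as a separate lemma, \cref{Lmm:EncadrementLogGp}, but the computation is the same.) Your extra attention to the degenerate case $\Landing(n)=0$ — falling back on the $\kappa^n\ln(|A||B|)$ term when the product over $m\in[1,\Landing(n)]$ is empty or $l_{\Landing(n)}$ is too small — is a genuine improvement in rigour: in the lamplighter limit $k_1=+\infty$ the product is empty for every $n$, so the paper's lemma as stated is vacuous for the lower bound and the proposition only holds because of the $\kappa^n\ln(|A||B|)$ contribution, exactly as you observe. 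One small phrasing caveat: when $\Landing(n)$ stays bounded the exceptional set is \emph{not} finite, but your fix still works because $l_{\Landing(n)}$ is then bounded and a single small enough $C_2$ handles all those $n$ uniformly.
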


Before showing the above proposition let us give an estimate of the right
factor of the expression of $|T_n|$.
 
\begin{Lmm}\label{Lmm:EncadrementLogGp}
  There exists two constants $C_1, C_2>0$ 
  such that for all $n \in \bN$,
  \begin{equation*}
    C_2 \kappa^{n-1} l_{\Landing(n)}
    \leq \ln \left(
      \prod^{\Landing(n)}_{m=1} {\left\vert {\Gamma^\prime}_m \right\vert}^{\kappa^n-k_m} \right)
    \leq C_1\kappa^n l_{\Landing(n)}.
  \end{equation*}
\end{Lmm}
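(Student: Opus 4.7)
The plan is to reduce everything to the estimate \eqref{eq:encadrementlngammap} for $\ln|\Gamma_m|$ and then exploit that $(l_m)_m$ is a subsequence of a geometric sequence.

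First I would translate the statement into an additive one: taking logarithms gives
\begin{equation*}
  \ln \Big( \prod^{\Landing(n)}_{m=1} {\left\vert {\Gamma^\prime}_m \right\vert}^{\kappa^n-k_m} \Big)
  = \sum_{m=1}^{\Landing(n)} (\kappa^n - k_m)\,\ln|\Gamma^\prime_m|.
\end{equation*}
By the last bullet of hypothesis \textbf{(H)}, $\theta_m$ identifies $\Gamma_m/\Gamma^\prime_m$ with $A_m\times B_m$, so $|\Gamma^\prime_m| = |\Gamma_m|/(|A|\,|B|)$. Combined with \eqref{eq:encadrementlngammap}, this yields constants $c'_1, c'_2 > 0$ (depending only on $|A|, |B|$) such that $c'_1 l_m - c'_2 \leq \ln|\Gamma^\prime_m| \leq c'_1 l_m + c'_2$ for all $m \geq 1$, and, since $l_m \to \infty$, one even has $\ln|\Gamma^\prime_m| \simeq l_m$ up to uniform multiplicative constants once $m \geq 1$.

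For the upper bound, I would use $\kappa^n - k_m \leq \kappa^n$ and write
\begin{equation*}
  \sum_{m=1}^{\Landing(n)} (\kappa^n - k_m)\,\ln|\Gamma^\prime_m|
  \leq \kappa^n \sum_{m=1}^{\Landing(n)} (c'_1 l_m + c'_2).
\end{equation*}
Since $(l_m)_m$ is a subsequence of a geometric sequence of ratio $\geq \lambda > 1$, the partial sum $\sum_{m=1}^{\Landing(n)} l_m$ is bounded by a constant multiple of its largest term $l_{\Landing(n)}$. The additive $c'_2$ contributes $\Landing(n) \cdot c'_2 \leq n c'_2$, which is also absorbed into $C l_{\Landing(n)}$ up to enlarging the constant (because $l_{\Landing(n)}$ grows at least linearly in $\Landing(n)$, and $\Landing(n) \leq n$). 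This produces the claimed $C_1 \kappa^n l_{\Landing(n)}$.

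For the lower bound, I would just retain the single summand $m = \Landing(n)$:
\begin{equation*}
  \sum_{m=1}^{\Landing(n)} (\kappa^n - k_m)\,\ln|\Gamma^\prime_m|
  \geq (\kappa^n - k_{\Landing(n)})\,\ln|\Gamma^\prime_{\Landing(n)}|.
\end{equation*}
Because $(k_m)_m$ is a subsequence of $(\kappa^m)_m$ and $k_{\Landing(n)} < \kappa^n$, we actually have $k_{\Landing(n)} \leq \kappa^{n-1}$, hence $\kappa^n - k_{\Landing(n)} \geq (\kappa - 1)\kappa^{n-1}$. Combined with $\ln|\Gamma^\prime_{\Landing(n)}| \geq c'_1 l_{\Landing(n)} - c'_2 \geq \tfrac{c'_1}{2} l_{\Landing(n)}$ for $\Landing(n)$ large enough, this gives the lower bound with $C_2 = (\kappa-1)c'_1/2$ (small $n$ being absorbed by shrinking the constant, since the full expression is positive). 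The main obstacle is really only bookkeeping of constants, in particular making sure that the additive $c'_2$ terms and the finitely many small-$n$ values of $\Landing(n)$ are harmlessly absorbed into the multiplicative constants $C_1$ and $C_2$.
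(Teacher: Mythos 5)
Your proposal is correct and follows essentially the same route as the paper: translate to a sum via logarithms, use \cref{eq:encadrementlngammap} together with $[\Gamma_m:\Gsp]=|A||B|$ to control $\ln|\Gamma'_m|$, bound $\kappa^n-k_m$ by $\kappa^n$ and use the geometric growth of $(l_m)$ for the upper bound, and keep only the last summand for the lower bound with $\kappa^n-k_{\Landing(n)}\geq\kappa^{n-1}$. The paper is marginally lazier on the upper bound (using only $\Gsp\leq\Gamma_m$ rather than the exact index), but that is cosmetic.

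One small slip worth fixing: when absorbing the additive constant you write $\Landing(n)c'_2\leq n c'_2$ and then claim this is $\preccurlyeq l_{\Landing(n)}$. The detour through $n c'_2$ is not justified: if the $k_m$ are sparse among the powers of $\kappa$, then $\Landing(n)$ can stay constant over long stretches of $n$, so $l_{\Landing(n)}$ stays bounded while $n$ grows, and $n c'_2\preccurlyeq l_{\Landing(n)}$ fails. What you actually need, and what your parenthetical suggests you meant, is the direct comparison $\Landing(n)c'_2\preccurlyeq l_{\Landing(n)}$, which does hold since $l_{\Landing(n)}\geq\lambda^{\Landing(n)}\geq\Landing(n)$. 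Drop the $\leq n c'_2$ step and the argument is clean.
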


\begin{proof}
  Recall that by \cref{eq:encadrementlngammap} there exists $c_1$, $c_2>0$ such that, for all $m$
  \begin{equation*}
    c_1 l_m -c_2 \leq \ln \left\vert {\Gamma_m}\right\vert \leq c_1l_m + c_2.
  \end{equation*}
  Since $\Gsp \leq \Gamma_m$ we thus have
  \begin{align*}
    \ln \left(
    \prod^{\Landing(n)}_{m=1} {\left\vert {\Gamma^\prime}_m \right\vert}^{\kappa^n-k_m} \right)
    &\leq \sum^{\Landing(n)}_{m=1} \left({\kappa^n-k_m}\right) \ln |\Gamma_m |,\\
    &\leq \sum^{\Landing(n)}_{m=1} \left({\kappa^n-k_m}\right)\left( c_1l_m+c_2 \right).
  \end{align*}
  But we can bound $\kappa^n-k_m$ from above by $\kappa^n$ and since
  $(l_m)_{m\in \bN}$ is a subsequence of a sequence having geometric growth, the sum 
  $\sum^{\Landing(n)}_{m=1} \left( c_1l_m+c_2 \right)$ is bounded from above by
  its last term up to a multiplicative constant. That is to say: there exists
  $C_1>0$ such that  
  \begin{equation*}
   \ln \left(
     \prod^{\Landing(n)}_{m=1} {\left\vert {\Gamma^\prime}_m \right\vert}^{\kappa^n-k_m} \right)
   \leq C_1\kappa^n l_{\Landing(n)}.
  \end{equation*}
  Hence the upper bound.
  Now, using that $[\Gamma_m:\Gsp]=|A||B|$ we have
  \begin{equation*}
    \ln \left(
    \prod^{\Landing(n)}_{m=1} {\left\vert {\Gamma^\prime}_m \right\vert}^{\kappa^n-k_m} \right)
    = \sum^{\Landing(n)}_{m=1} \left({\kappa^n-k_m}\right)
    \ln|\Gamma^\prime_m|
    = \sum^{\Landing(n)}_{m=1} \left({\kappa^n-k_m}\right)
    \ln \left(\frac{|\Gamma_{m} |}{|A||B|}\right). 
  \end{equation*}
  Bounding the sum from below by its last term and using once more
  \cref{eq:encadrementlngammap}, we get  
  \begin{align*}
    \ln \left(
    \prod^{\Landing(n)}_{m=1} {\left\vert {\Gamma^\prime}_m \right\vert}^{\kappa^n-k_m} \right)
    &\geq (\kappa^n-k_{\Landing(n)})\ln \left( \frac{|\Gamma_{\Landing(n)} |}{|A||B|} \right) ,\\
    &\geq (\kappa^n-k_{\Landing(n)})\left( c_1l_{\Landing(n)}-c_2 -\ln \left( |A||B| \right) \right),\\
    &\geq C_2(\kappa^n - k_{\Landing(n)})l_{\Landing(n)},
  \end{align*}
  for some $C_2>0$. We get the wanted inequality by noting that
  $\kappa^n - k_{\Landing(n)}\geq \kappa^{n-1}$.
\end{proof}
\begin{proof}[Proof of \cref{Prop:EncadrementLogTn}]
  Applying \cref{Lmm:EncadrementLogGp} to the cardinal of $T_n$ given by
  \cref{Lmm:CardTn} we obtain that there exists $\CTn>0$ such that $\ln|T_n|
  \leq \CTn  \kappa^n l_{\Landing(n)}$. Hence the upper bound.   
  The minoration comes imediately from \cref{Lmm:EncadrementLogGp}.
\end{proof}

Equipped with these bounds on $|T_n|$ we can now show the wanted integrability
for the coupling.

\subsection{Integrability of the coupling}
We will show that $\BZ$ is the group satisfying \cref{Th:CouplingwithZ}, but
first let us quantify the integrability of the orbit equivalence coupling with
$\bZ$ induced by the Følner tiling shifts we built.
Recall that $\calC$ denotes the set of non-decreasing functions $\rho\colon
[1,+\infty[ \rightarrow [1,+\infty[$ such that $x/\rho(x)$ is non-decreasing. 
\newpage

\begin{Th}\label{Th:OEZBZ}
  Let $\rho \in \calC$ and take $\BZ$ to be the Brieussel-Zheng diagonal
  product defined from $\rho$.
  Let $\varepsilon >0$ and $\Psi := \exp \circ \rho$ and let
  \begin{equation*}
    \varphi_{\varepsilon}(x):=\frac{\rho\circ \ln(x)}%
    {\left( \ln\circ \rho\circ \ln (x) \right)^{1+\varepsilon}}. 
  \end{equation*}
  There exists an orbit equivalence coupling from $\BZ$ to $\bZ$ that is
  $(\varphi_{\varepsilon},\Psi)$-integrable.
\end{Th}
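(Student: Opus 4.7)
The plan is to apply Proposition~\ref{Prop:ConditionpourOE} to the two Følner tiling shifts already constructed: $(\Sigma_n)_n$ for $\BZ$ from Section~\ref{subsec:FOTSDelta} and $(\SZ_n)_n$ for $\bZ$ from Section~\ref{subsec:TilesZ}. By construction these share the same cardinality at each level, so they induce an orbit equivalence coupling between $\BZ$ and $\bZ$ over $X = \prod_n \Sigma_n$. The announced $(\varphi_\varepsilon, \Psi)$-integrability then splits, via Proposition~\ref{Prop:ConditionpourOE} applied in both directions, into the two independent summability conditions
\begin{equation*}
\sum_n \varphi_\varepsilon(2 \RZ_n)(\varepsilon_{n-1} - \varepsilon_n) < \infty \quad \text{and} \quad \sum_n \Psi(2 R_n)(\epsZ_{n-1} - \epsZ_n) < \infty,
\end{equation*}
where $R_n, \varepsilon_n$ are given by Proposition~\ref{Lmm:RkEpskTn} and $\RZ_n, \epsZ_n$ by Proposition~\ref{Prop:FOTSZ}.

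The pivotal asymptotic for both estimates is $\rho(\kappa^n l_{\Landing(n)}) \simeq \kappa^n$. To establish it, note that by definition of $\Landing(n)$ one has $k_{\Landing(n)} \leq \kappa^n \leq k_{\Landing(n)+1}$, so $\kappa^n l_{\Landing(n)}$ lies in the plateau $[k_{\Landing(n)} l_{\Landing(n)}, k_{\Landing(n)+1} l_{\Landing(n)}]$ on which $\bar{f}$ is constant equal to $l_{\Landing(n)}$ (Lemma~\ref{Prop:B2}). Hence $\rhoaff(\kappa^n l_{\Landing(n)}) = \kappa^n$, and since $\rho \simeq \rhoaff$ the claim follows.

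For the first sum the approach is to plug in $\RZ_n = |T_n|$, the estimate $\ln|T_n| \simeq \kappa^n l_{\Landing(n)}$ from Proposition~\ref{Prop:EncadrementLogTn}, and $\varepsilon_n = 2/\kappa^n$. The key asymptotic then yields $\rho(\ln(2\RZ_n)) \simeq \kappa^n$, so $\varphi_\varepsilon(2\RZ_n) \simeq \kappa^n / (n\ln\kappa)^{1+\varepsilon}$, while $\varepsilon_{n-1}-\varepsilon_n = 2(\kappa-1)/\kappa^n$. Each summand is therefore of order $n^{-(1+\varepsilon)}$—this is where the hypothesis $\varepsilon > 0$ is used—and the series converges. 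For the second sum, $R_n \simeq \kappa^n l_{\Landing(n)}$ combined with the scaling $\rho(cx) \leq c\rho(x)$ for $c \geq 1$ (see \cref{Rq:IneqTildeRho}) gives $\Psi(2R_n) \leq \exp(C\kappa^n)$ for some constant $C$. The lower bound $\ln|T_{n-1}| \geq C_2 \kappa^{n-2} l_{\Landing(n-1)}$ from Proposition~\ref{Prop:EncadrementLogTn} then yields $\epsZ_{n-1} \leq 2\exp(-C_2 \kappa^{n-2} l_{\Landing(n-1)})$, so the summand is dominated by $\exp(C\kappa^n - C_2\kappa^{n-2} l_{\Landing(n-1)})$.

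The main obstacle is calibrating constants in this last exponent so that it tends to $-\infty$ geometrically. When $l_{\Landing(n)} \to \infty$ this is automatic, but in the lamplighter-like regime where $(l_m)$ remains essentially bounded one must ensure that $C_2 > C\kappa^2$ by appropriate choice of the parameters $\kappa, \lambda$ and of the finite groups $A, B$ entering the Brieussel--Zheng construction of Section~\ref{Sec:DefdeBZ}; this is possible thanks to the flexibility in the choice of parameters afforded by Proposition~\ref{Prop:DefdeDelta}. Once both summability conditions are verified, Proposition~\ref{Prop:ConditionpourOE} delivers the desired $(\varphi_\varepsilon, \Psi)$-integrable orbit equivalence coupling from $\BZ$ to $\bZ$.
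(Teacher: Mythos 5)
Your proposal follows the paper's own proof very closely: same two Følner tiling shifts, same appeal to Proposition~\ref{Prop:ConditionpourOE}, the same pivotal identity $\rhoaff(\kappa^n l_{\Landing(n)})=\kappa^n$ deduced from the plateau structure of $\bar f$, and the same bounds from Propositions~\ref{Lmm:RkEpskTn}, \ref{Prop:FOTSZ} and~\ref{Prop:EncadrementLogTn}. Both your estimates of the two summands are essentially identical to the paper's (your use of the tighter quantity $\varepsilon_{n-1}-\varepsilon_n$ in place of the paper's cruder $\varepsilon_{n-1}$ is harmless).

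The one place you diverge is in concluding the $\Psi$-summability. The paper finishes simply by observing that $l_{\Landing(n)}\to\infty$, so the exponent $\kappa^{n-2}(2C_R\kappa^2-C_2 l_{\Landing(n-1)})$ eventually goes to $-\infty$ geometrically, and no constant calibration is needed. This divergence of $l_{\Landing(n)}$ is not an extra assumption: under Hypothesis~\textbf{(H)} the $(\Gamma_m)$ form a sequence of expanders, so their diameters are unbounded, hence $l_m\to\infty$; and since all $k_m$ are then finite, $\Landing(n)\to\infty$ as well. Your ``lamplighter-like regime where $(l_m)$ remains essentially bounded'' is the degenerate situation $k_m=+\infty$, which is outside~\textbf{(H)} and which the paper already records as covered by \cref{Ex:LamplighterZ}; so the constant-calibration detour you introduce addresses an edge case the paper's proof deliberately does not carry, and it is also not fully verified in your sketch (whether $C_2>C\kappa^2$ can always be arranged requires checking that $C_R$ stays bounded while $\ln(|A||B|)$ can be made arbitrarily large with $l_0$ fixed, which you assert but do not prove). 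Dropping that paragraph and invoking $l_{\Landing(n)}\to\infty$ directly would match the intended argument and make the write-up cleaner.
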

Let us discuss the strategy of the proof. The demonstration is based on
\cref{Prop:ConditionpourOE}, thus we first prove that
$(\Psi(2R_n)\epsZ_{n-1})_n$ is summable and then that
$(\varphi_{\varepsilon}(2\RZ_n)\varepsilon_{n-1})_n$ 
is. In both cases we use \cref{Prop:EncadrementLogTn} to get upper bounds.
So far, we have the following quantifications.

\begin{HypBox}
  \centering
  \begin{tabular}{rlcrl}
     $R_n$&$=C_R \kappa^n l_{\Landing(n)}$ & \quad & $\RZ_n$&$= \left\vert{T_n}\right\vert$  \\
    $\varepsilon_n$&=${2}{\kappa^{-n}}$ & \quad & $\epsZ_n$&$={2}/{\left\vert T_n \right\vert}$ \\
  \end{tabular}
\end{HypBox}
\begin{proof}[Proof of \cref{Th:OEZBZ}]
  Let $\rho \in \calC$
  and take $\BZ$ to be the diagonal product defined from $\rho$ as described in
  \cref{Subsec:FromIPtoBZ}.  

  To begin, let us recall some preliminary results about $\rho$.
  Remember that $\rho \simeq \rhoaff$ where $\rhoaff$ is defined below
  \cref{Def:Tilde}. By definition of $\Landing(n)$ we have 
  $k_{\Landing(n)}l_{\Landing(n)} \leq \kappa^n l_{\Landing(n)} \leq
  k_{\Landing(n)+1}l_{\Landing(n)}$, thus by \cref{Def:Tilde}
  \begin{equation}\label{eq:tilderhokappan}
   \rhoaff(\kappa^n l_{\Landing(n)}) =\kappa^n. 
  \end{equation}
  
  Now let us show that the coupling from $\bZ$ to $\BZ$ is
  $\Psi$-integrable. To do so we prove that
  $\left(\Psi(2R_n)\epsZ_{n-1}  \right)$ is summable.
  First note that by \cref{Prop:EncadrementLogTn} we have the following lower bound on $|T_{n-1}|$
  \begin{equation}\label{eq:MinorationTnmoinsun}
    |T_{n-1}| \geq \exp\left(C_2 \kappa^{n-2} l_{\Landing(n-1)} \right).
  \end{equation}
  Moreover recall that $R_n=C_R \kappa^nl_{\Landing(n)}$ and $\epsZ_{n-1}=2/|T_{n-1}|$
  thus by the inequality above
  \begin{align*}
    \Psi(2R_n)\epsZ_{n-1}
    &=\exp\Big[ \rho(2C_R\kappa^nl_{\Landing(n)}) \Big] \frac{2}{|T_{n-1}|},\\
    &\leq 2 \exp 
      \Big[\rho \left(2C_R\kappa^nl_{\Landing(n)}\right)- C_2\kappa^{n-2}l_{\Landing(n-1)}\Big].
  \end{align*}
  But remember that $\rho\simeq \rhoaff$. Thus using
  \cref{eq:tilderhokappan,Rq:IneqTildeRho} we get 
  \begin{equation} \label{eq:jaipasdidee}
    \rho \left(2C_R\kappa^nl_{\Landing(n)}\right)
    \simeq \rhoaff \left(2C_R\kappa^nl_{\Landing(n)}\right)
    \leq 2C_R \rhoaff \left(\kappa^nl_{\Landing(n)}\right) = 2C_R \kappa^n.
  \end{equation}
  Combining the above result with the previous inequality, we get
  \begin{align*}
    \Psi(2R_n)\epsZ_{n-1}
      & \preccurlyeq 2 \exp\left[2C_R\kappa^n-C_2\kappa^{n-2}l_{\Landing(n-1)} \right],\\
      & = 2 \exp\left[\kappa^{n-2}\left(2C_R\kappa^2-C_2l_{\Landing(n-1)}\right)\right],
  \end{align*}
  which is
  summable. Indeed $l_{\Landing(n)}$ tends to infinity and thus
  $\left(2C_R\kappa^2-C_2l_{\Landing(n-1)}\right)<-1$ for $n$ large enough. Hence by
  \cref{Prop:ConditionpourOE} the orbit equivalence from $\bZ$ to $\BZ$ si $\Psi$-integrable.

  Now, let us show that for all $\varepsilon>0$ the coupling from $\BZ$ to $\bZ$ is
  $\varphi_{\varepsilon}$-integrable. Based on \cref{Prop:ConditionpourOE} we only
  have to prove that $\varphi_{\varepsilon}(2\RZ_n)\varepsilon_{n-1}$ is summable.
  Recall that $\RZ_n=|T_n|$ and $\varepsilon_{n-1}=2/\kappa^{n-2}$ and remark
  that by both the lower and upper bounds given in \cref{Prop:EncadrementLogTn}
  we have
  \begin{equation*}
    \varphi_{\varepsilon}(2\RZ_n)\varepsilon_{n-1}
    =\frac{2 \rho\circ\ln\big(2|T_n|\big)}%
    {\Big( \ln\circ \rho\circ \ln\big(2|T_n| \big)\Big)^{1+\varepsilon}\kappa^{n-1}}
    \leq \frac{2 \rho\big(2\CTn \kappa^nl_{\Landing(n)}\big)}%
    {\Big(\ln\circ\rho\left(2C_2 \kappa^{n-1}l_{\Landing(n)}\right)\Big)^{1+\varepsilon}\kappa^{n-1}}.
  \end{equation*}
  Let us give a lower bound for $\rho\left(2C_2
    \kappa^{n-1}l_{\Landing(n)}\right)$. Recall that $\rho \simeq \rhoaff$
  furthemore if $2C_2\geq 1$ then by \cref{eq:tilderhokappan} and since
  $\rhoaff$ is non-decreasing  
  \begin{align*}
    \kappa^{n-1}=\rhoaff\left(\kappa^{n-1}l_{\Landing(n)}\right)
    &\leq
    \rhoaff\left(2C_2 \kappa^{n-1}l_{\Landing(n)}\right)
    \simeq \rho\left(2C_2 \kappa^{n-1}l_{\Landing(n)}\right).   
  \end{align*}
  Now if $2C_2 <1$ using \cref{Claim:IneqTildeRho} with $c^\prime=2C_2$ and
  $x^\prime =\kappa^{n-1}l_{\Landing(n)}$ we get (for $n$ large enough)
  \begin{equation*}
    2C_2 \kappa^{n-1} =2C_2 \rhoaff(\kappa^{n-1}l_{\Landing(n)})
    \leq \rhoaff(2C_2\kappa^{n-1}l_{\Landing(n)})
   \simeq \rho\left( 2C_2 \kappa^{n-1}l_{\Landing(n)}\right)
  \end{equation*}
  Hence, in both cases $\kappa^{n-1} \preccurlyeq \rho( 2C_2
  \kappa^{n-1}l_{\Landing(n)})$. Finally replacing $C_R$ by $\CTn$ in 
  \cref{eq:jaipasdidee} we can show that $\rho
  \left(2\CTn \kappa^nl_{\Landing(n)}\right) \leq 2\CTn \kappa^n$. Thus,
  combining the two preceding results, we obtain
  \begin{align*}
    \varphi_{\varepsilon}(\RZ_n)\varepsilon_{n-1}
    &\leq \frac{2 \rho\big(\CTn \kappa^nl_{\Landing(n)}\big)}%
    {\Big( \ln\circ\rho\left(C_2 \kappa^{n-1}l_{\Landing(n)}\right)\Big)^{1+\varepsilon}\kappa^{n-1}}\\
    &\preccurlyeq\frac{\kappa^{n}}%
      {{\Big(\ln\left(\kappa^{n-1}\right)\Big)}^{1+\varepsilon}\kappa^{n-1}}
      =\frac{\kappa}{{\left((n-1)\ln(\kappa)\right)}^{1+\varepsilon}},
  \end{align*}
  which is a summable sequence. Hence by
  \cref{Prop:ConditionpourOE} the orbit equivalence coupling from $\BZ$ to $\bZ$
  si $\varphi_{\varepsilon}$-integrable.
\end{proof}
\begin{Rq}
 This result is stated in the general case, that is to say for an abstract
 $\rho$. Nonetheless, for some particular functions $\rho$ the quantification
 can be improved. For example the case where $k_n= 2^n$ and $l_n=2^{\alpha n}$
 corresponds to $\rho(x)\simeq x^{1/(1+\alpha)}$. In that case $\Landing(n)=n-1$ and we
 can show that the coupling from $\bZ$ to $\BZ$ is $\exp$-integrable (instead of
 $\exp\circ \rho$-integrable). Indeed, 
 let $c_{\varphi}< C_2/(C_R2^{3+\alpha})$ and $\Psi(x):=\exp(c_{\varphi} x)$,
 then by \cref{eq:MinorationTnmoinsun}
 \begin{align*}
    \Psi(2R_n)\epsZ_{n-1} &= \exp\left[c_{\varphi} 2C_Rk_nl_{n-1}\right] \frac{2}{|T_{n-1}|}\\
    &\leq \exp \left[c_{\varphi}2C_R2^n2^{\alpha(n-1)}- C_22^{n-2}2^{\alpha(n-2)}\right]
      2\\ 
   &= 2\exp \left[2^{n-2}2^{\alpha(n-2)} \left(c_{\varphi}C_R2^{3+\alpha}- C_2  \right)\right].
 \end{align*}
 Which is summable by choice of $c_{\varphi}$.
\end{Rq}
\begin{Rq} We can verify that the integrability obtained for the coupling from
  $\BZ$ to $\bZ$ is “almost” optimal.
  Indeed if the coupling from $\BZ$ to $\bZ$ is
  $\varphi$-integrable, then by \cref{Th:ProfiletOE} we have
  \begin{equation*}
    \varphi \circ I_{\bZ} \preccurlyeq I_{\BZ} 
  \end{equation*}
  where we recall that $I_{\bZ}(n) \simeq n$ and $I_{\BZ}(n)\simeq
  \rho \circ\ln(n)$. Thus using the inequality above, we get $\varphi(n)
  \preccurlyeq \rho \circ \ln(n)$. Hence the quantification of
  \cref{Th:OEZBZ} is optimal up to a logarithmic factor.
\end{Rq}

It is now easy to prove our first main theorem.
\begin{proof}[Proof of \cref{Th:CouplingwithZ}]
  Let $\rho \in \calC$
  and $\BZ$ to be the
  group defined in \cref{Prop:DefdeDelta}. By the aforementioned proposition it
  verifies $\profile_{\BZ}\simeq \rho \circ \log$. Moreover by
  \cref{Th:OEZBZ} there exists an orbit equivalence coupling from $\BZ$ and
  $\bZ$ that is $(\varphi_{\varepsilon},\exp \circ \rho)$-integrable for all
  $\varepsilon>0$.
\end{proof}

To prove \cref{Rq:Zd} we use the composition of couplings introduced in
\cite{DKLMT}.
We recall below the proposition concerning the integrability of this composition
and refer to \cite[Sections 2.3 and 2.5]{DKLMT} for more details on the
construction of the corresponding coupling.

\begin{Prop}[{\cite[Prop. 2.9 and 2.26]{DKLMT}}]\label{Prop:CompositionOE}
Let $\varphi,\psi:\bR^+\rightarrow \bR^+$ be non-decreasing subadditive maps with
$\varphi$ moreover concave.
If $(X_1,\mu_1)$ (resp. $(X_2,\mu_2)$) is a $(\varphi,L^0)$-integrable (resp.
$(\psi,L^0)$-integrable) orbit equivalence coupling from $\Gamma$ to $\Lambda$
(resp. $\Lambda$ to $\Sigma$), the composition of couplings gives a $(\varphi
\circ \psi,L^0)$-integrable orbit equivalence coupling from $\Gamma$
to~$\Sigma$.
\end{Prop}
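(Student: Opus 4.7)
The proposition is quoted verbatim from \cite{DKLMT}, so what this paper would write here is essentially a pointer to \cite[Sec.~2.3 and 2.5]{DKLMT}. For the purpose of this sketch I will outline the argument given there, which splits into an existence part (constructing the composed coupling) and a quantitative part (estimating its integrability).

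For the existence, I would re-interpret each orbit equivalence coupling as a measure equivalence coupling, placing $(X_1,\mu_1)$ and $(X_2,\mu_2)$ as fundamental domains inside larger spaces $\Omega_1$ and $\Omega_2$ carrying commuting free p.m.p.\ actions of $\Gamma,\Lambda$ (resp.\ $\Lambda,\Sigma$). I would then glue via the fibered product $\Omega_1\times_{\Lambda}\Omega_2$, on which $\Gamma\times\Sigma$ acts freely with commuting actions, and pick a measurable set that is simultaneously a fundamental domain for each factor. Restricting $\mu_1\otimes\mu_2$ to this domain yields a probability space $(X,\mu)$ carrying free p.m.p.\ actions of $\Gamma$ and $\Sigma$ with the same orbits.

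For the integrability, I would fix $g\in\Gamma$ and follow its action through both couplings. The first coupling provides a Zimmer cocycle $\alpha\colon \Gamma\times X_1\to\Lambda$ such that $g\cdot x_1=\alpha(g,x_1)\cdot x_1$ on the $\Lambda$-side, and the $(\varphi,L^0)$-integrability hypothesis means $x_1\mapsto |\alpha(g,x_1)|_{S_\Lambda}$ has $\varphi$-integrable distribution (up to a rescaling constant). On the $\Sigma$-side, writing $\alpha(g,x_1)$ as a word of length $n=|\alpha(g,x_1)|_{S_\Lambda}$ in generators $h_1,\dots,h_n\in S_\Lambda$ and telescoping along the orbit gives
\[
d_{S_\Sigma}(x,g\cdot x)\;\leq\;\sum_{i=1}^{n}d_{S_\Sigma}(y_{i-1},h_i\cdot y_{i-1})
\]
for the intermediate points $y_i$ on the $\Sigma$-orbit. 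Subadditivity of $\varphi$ lets me push $\varphi$ through this sum; integrating over $X_2$ and using the $(\psi,L^0)$-integrability of the second coupling (after applying $\varphi$ to its output, which is fine since $\varphi$ is non-decreasing) yields a fiberwise bound of the form $C\cdot\varphi\bigl(\psi(|\alpha(g,x_1)|_{S_\Lambda})\bigr)$. Concavity of $\varphi$ then allows Jensen's inequality $\int \varphi(f)\,d\mu_1\leq \varphi\bigl(\int f\,d\mu_1\bigr)$ to move $\varphi$ outside the remaining integral, giving finiteness of $\int_X \varphi\circ\psi\bigl(d_{S_\Sigma}(x,g\cdot x)\bigr)\,d\mu$; the symmetric statement for $\sigma\in\Sigma$ is handled identically.

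The main obstacle is precisely the clean interchange of $\varphi$, $\psi$, and the two integrals: subadditivity is what allows decomposing the $\Lambda$-cocycle into single-generator contributions so that $\psi$-integrability can be applied termwise, and concavity is what makes the Jensen step go in the correct direction. Without both hypotheses the bound would not telescope into a genuine $\varphi\circ\psi$-integrability statement, which is why they appear in the hypothesis.
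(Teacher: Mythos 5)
You correctly note that this proposition is an external citation (DKLMT Prop.~2.9 and 2.26) and that the paper under review offers no proof of its own, so there is nothing internal to compare against. Your picture of the existence part (re-interpreting each orbit equivalence as a measure equivalence, forming the fibered product $\Omega_1\times_{\Lambda}\Omega_2$, and extracting a simultaneous fundamental domain) is the standard one and is fine.

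The quantitative part of your sketch, however, does not close as written. You claim that integrating the telescoping inequality over $X_2$ yields a fiberwise bound of the form $C\cdot\varphi\bigl(\psi(|\alpha(g,x_1)|_{S_\Lambda})\bigr)$ and then propose to finish by applying Jensen to the $\mu_1$-integral. But with that fiberwise bound, Jensen would leave you needing $\int_{X_1}\psi\bigl(|\alpha(g,x_1)|_{S_\Lambda}\bigr)\,d\mu_1<\infty$, and the first coupling is only assumed $\varphi$-integrable, not $\psi$-integrable. The roles of the hypotheses are also swapped: it is subadditivity of $\psi$ (not of $\varphi$) that pushes the inner function through the telescoped sum, giving $\varphi\circ\psi(d_{S_\Sigma}(x,g\cdot x))\le\varphi\bigl(\sum_{i=1}^{n}\psi(d_{S_\Sigma}(h_i\cdot y_{i-1},y_{i-1}))\bigr)$ with $n=|\alpha(g,x_1)|_{S_\Lambda}$; Jensen via concavity of $\varphi$ is then used at the $X_2$-integration step to obtain $\varphi\bigl(\sum_i\int_{X_2}\psi(\cdots)\,d\mu_2\bigr)\le\varphi(nC)$, where $C$ is the worst-case $\psi$-integral over single $S_\Lambda$-generators (this is where $(\psi,L^0)$-integrability and measure preservation enter); and finally subadditivity of $\varphi$ gives $\varphi(nC)\le\lceil C\rceil\varphi(n)$. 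The correct fiberwise bound is therefore $\lceil C\rceil\,\varphi(n)$ — $\varphi$ applied to $n$, not to $\psi(n)$ — and integrating this against $\mu_1$ is then exactly what $(\varphi,L^0)$-integrability of the first coupling provides, with no further Jensen step.
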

Let us now show \cref{Rq:Zd} concerning the coupling with $\bZ^d$.
\begin{proof}[Proof of \cref{Rq:Zd}]\label{DemoCorollaire}
  Let $d \geq 1$. Let $\rho \in \calC$ and let $\BZ$ be the
  group defined in \cref{Prop:DefdeDelta}, in particular it verifies
  $\profile_{\BZ}\simeq \rho \circ \log$.
  Assume moreover that the map $\varphi_\varepsilon$ defined by $\varphi_{\varepsilon}(x):={\rho\circ
  \log(x)}/{\left( \log\circ\rho\circ\log(x) \right)^{1+\varepsilon}}$ is subadditive and concave.

  Since $d=1$ is precisely the case of \cref{Th:CouplingwithZ},
  we only have to treat the case of $d\geq 2$. For such a $d$ recall (see \cref{Ex:CouplageZmZn})
  that for all $p<d$ and
  all $q<1/d$ there exists a $(L^p,L^{q})$-integrable orbit equivalence coupling from
  $\bZ$ to $\bZ^d$. In particular taking $p=1$ and $q=0$ gives a $(L^1,L^{0})$-integrable orbit
  equivalence coupling from $\bZ$ to $\bZ^d$.
  Hence, using the composition of couplings described in \cite{DKLMT} 
  we can deduce from \cref{Th:CouplingwithZ} and \cref{Prop:CompositionOE} above
  that there exists a
  $(\varphi_{\varepsilon},L^0)$-integrable orbit equivalence coupling
  from $\BZ$ to $\bZ^d$. 
Hence the corollary.
\end{proof}
\begin{Rq}
We make the hypothesis that $\varphi_\varepsilon$ is subadditive and concave
only in order to use \cref{Prop:CompositionOE} and the composition of couplings. Building directly a coupling from $\BZ$ to $\bZ^d$ (instead of transiting \emph{via} $\bZ$)
might allow to remove the aforementioned assumption.
\end{Rq}

\newpage
\section{Conclusion and open problems}\label{Sec:Conclusion}
Let us conclude with some questions and remarks.

\subsection{Optimality and coupling building techniques}\label{subsec:Optim}

The tiling technique —though inspiring— is not always usable to get orbit
equivalence couplings. Indeed the condition that the two Følner tiling shifts
must have at each step the same cardinality is very restrictive. Furthemore this
technique does not seem to produce couplings with the best quantification: wether
it is our coupling with $\bZ$ or the one built in \cite{DKLMT}
(\cref{Ex:LamplighterZ,Ex:CouplageZmZn}) the integrability is always optimal
\emph{up to a logarithmic factor}.
One can thus ask: is the optimal integrability reachable? Is the logarithmic
error due to the building technique?

\subsection{Inverse problem}

We studied here the inverse problem for the group of integers
(\cref{Q:DKLMTZ}) but one can also ask the same question for other groups
than $\bZ$.

\begin{Q}\label{Q:InversePbGen}
  Given a function $\varphi$ and a group $H$ 
  is there a group $G$ such that there exists a $(\varphi,L^0)$-measure
  equivalent from $G$ to $H$? Can $G$ be chosen such that $\varphi\circ I_H
  \simeq I_G$?
\end{Q}

In \cite{EscSofic} we answer this question when $H$ is a diagonal
product, in particular $H$ can be a lamplighter group. This coupling is
obtained with another building technique than the tiling process and the
integrability is \emph{optimal}, answering the questions of
\cref{subsec:Optim} positively.

\newpage
\bibliographystyle{alpha}
\bibliography{Biblio}
\markboth{Bibliography}{Bibliography}
\newpage

\section*{Notations Index}
\markboth{Notations}{Notations}
\addcontentsline{toc}{section}{Notations}
\begin{description}
\item[$\preccurlyeq$, $\simeq$] See above \cref{Th:ProfiletOE}.
\item[$|X|$] Cardinal of the set $X$.
\item[$\partial F$] Boundary of the set $F$.
\item[$\BZ$] See \cref{Def:BZGr}.
\item[$\BZ_m$] See \cref{subsec:DefDelta}.
\item[$F_n$] Følner sequence of $\BZ$.
\item[$\mbfg$] The sequence of maps $(g_m)_{m\in \bN}$.
\item[$g^\prime_m$] See \cref{subsec:Derivedfunctions}.
\item[$\Gsp$] Normal closure of $[A_m,B_m]$.
\item[$\profile_{G}$] Isoperimetric profile of $G$.
\item[$R_n$] Diameter of $T_n$.
\item[$\RZ_n$] Diameter of $\TZ_n$.
\item[$\range(\mbff,t)$] The range of $(\mbff,t)$, see \cref{Def:Rangedelta}.
\item[$S_G$] A generating set of the group $G$.
\item[$\Sigma_n$] Følner tiling shifts (of $\BZ$).
\item[$\SZ_n$] Følner tiling shifts of $\bZ$.
\item[$T_n$] Tile of $\BZ$ defined by $T_n=\prod^{n}_{i=0}\Sigma_i$
\item[${\TZ}_n$] Tile of $\bZ$ defined by $\TZ_n=\prod^{n}_{i=0}\SZ_i$
\item[$\theta^A_m(f_m)$] Natural projection of $f_m$ on $A_m$ (see
  \cref{subsec:Derivedfunctions}).
\item[$\theta^B_m(f_m)$] Natural projection of $f_m$ on $B_m$ (see
  \cref{subsec:Derivedfunctions}).
\end{description}
\vfill

\noindent\textbf{Amandine Escalier}\\
Mathematisches Institut,\\
Fachbereich Mathematik und Informatik der Universität Münster,\\
Orléans-Ring 12,\\
48149 Münster,\\
Germany
\end{document}